\definecolor{MyDarkblue}{rgb}{0,0.08,0.50}
\definecolor{Brickred}{rgb}{0.65,0.08,0}
\newtheorem*{theorem*}{Theorem}
\newtheorem{theorem}{Theorem}[section]
\newtheorem{lemma}[theorem]{Lemma}
\newtheorem{example}[theorem]{Example}
\newtheorem{proposition}[theorem]{Proposition}
\newtheorem{corollary}[theorem]{Corollary}
\newtheorem{definition}[theorem]{Definition}
\newtheorem{assumption}[theorem]{Assumption}
\newtheorem{remark}[theorem]{Remark}
\newtheorem{claim}[theorem]{Claim}
\newtheorem{observation}[theorem]{Observation}
\newcommand{\Pv}{\mathbb{P}}
\newcommand{\Ev}{\mathbb{E}}
\newcommand{\CC}{\mathcal{C}}
\newcommand{\sss}{\scriptscriptstyle}
\newcommand{\Var}{{\rm Var}}
\newcommand{\CE}{{\mathcal{E}}}
\newcommand*{\CMD}{{\mathrm{CM}}_n(\boldsymbol{d})}
\newcommand{\e}{{\mathrm e}}
\numberwithin{equation}{section}
\newcommand{\R}{\mathbb{R}}
\newcommand{\N}{\mathbb{N}}
\newcommand{\Z}{\mathbb{Z}}
\renewcommand{\emptyset}{\varnothing}
\newcommand{\CA}{\mathcal {A}}
\newcommand{\CB}{\mathcal {B}}
\newcommand{\CD}{\mathcal {D}}
\newcommand{\CF}{\mathcal {F}}
\newcommand{\CG}{\mathcal {G}}
\newcommand{\CL}{\mathcal {L}}
\newcommand{\CN}{\mathcal {N}}
\newcommand{\CP}{\mathcal {P}}
\newcommand{\CS}{\mathcal {S}}
\newcommand{\CT}{\mathcal {T}}
\newcommand{\CH}{\mathcal {H}}
\newcommand*{\vr}{\varrho}
\newcommand*{\ve}{\varepsilon}
\newcommand*{\be}{\begin{equation}}
\newcommand*{\ee}{\end{equation}}
\newcommand*{\ba}{\begin{aligned}}
\newcommand*{\ea}{\end{aligned}}
\newcommand*{\barr}{\begin{array}{c}}
\newcommand*{\earr}{\end{array}}
\def \toinp    {\buildrel {\Pv}\over{\longrightarrow}}
\def \toindis  {\buildrel {d}\over{\longrightarrow}}
\def \toas     {\buildrel {a.s.}\over{\longrightarrow}}
\newcommand*{\Yrn}{Y_r^{\scriptscriptstyle{(n)}}}
\newcommand*{\Ybn}{Y_b^{\scriptscriptstyle{(n)}}}
\newcommand*{\bnr}{b_n^{\sss{(r)}}}
\newcommand*{\bnb}{b_n^{\sss{(b)}}}
\newcommand*{\bnq}{b_n^{\sss{(q)}}}
\newcommand*{\wit}{\widetilde}
\newcommand*{\uiq}{u_{i}^{\sss{(q)}}}
\newcommand*{\uiqp}{u_{i+1}^{\sss{(q)}}}
\newcommand*{\huiq}{\widehat u_{i}^{\sss{(q)}}}
\newcommand*{\ind}{\mathbbm{1}}
\def\namedlabel#1#2{\begingroup
    #2%
    \def\@currentlabel{#2}%
    \phantomsection\label{#1}\endgroup
}
\begin{document}
	\title[When is a scale-free graph ultra-small?]{
When is a scale-free graph ultra-small?}

	\date{\today}
	\subjclass[2000]{Primary: 60C05, 05C80, 90B15.}
	\keywords{Random networks, configuration model, scale free, small world property, truncated power law degrees, typical distances}

	\author[van der Hofstad]{Remco van der Hofstad}
	\address{Department of Mathematics and
	    Computer Science, Eindhoven University of Technology, P.O.\ Box 513,
	    5600 MB Eindhoven, The Netherlands.}
\author[Komj\'athy]{J\'ulia Komj\'athy}
	\email{rhofstad@win.tue.nl, j.komjathy@tue.nl}

\begin{abstract}
In this paper we study typical distances in the configuration model, when the degrees have asymptotically infinite variance. We assume that the empirical degree distribution follows a power law with exponent $\tau\in (2,3)$, up to value $n^{{\beta_n}}$ for some ${\beta_n}\gg (\log n)^{-\gamma}$ and $\gamma\in(0,1)$.
This assumption is satisfied for power law i.i.d.\ degrees, and also includes \emph{truncated power-law} empirical degree distributions where the (possibly exponential) truncation happens at $n^{{\beta_n}}$.  These examples are commonly observed  in many real-life networks.

 We show that the graph distance between two uniformly chosen vertices centers around $2 \log \log (n^{{\beta_n}}) / |\log (\tau-2)| + 1/({\beta_n}(3-\tau))$, with tight fluctuations. Thus, the graph is an \emph{ultrasmall world} whenever $1/{\beta_n}=o(\log\log n)$.
 We determine the distribution of the fluctuations around this value, in particular we prove these form a sequence of tight random variables with distributions that show $\log \log$-periodicity, and as a result it is non-converging.

 We describe the \emph{topology and number of shortest paths}: We show that the number of shortest paths is of order $n^{f_n{\beta_n}}$, where $f_n \in (0,1)$ is a random variable that oscillates with $n$.  We decompose shortest paths into three segments, two `end-segments' starting at each of the two uniformly chosen vertices, and a middle segment. The two end-segments of any shortest path have length $\log \log (n^{{\beta_n}}) / |\log (\tau-2)|$+tight,  and the total degree is increasing towards the middle of the path on these segments.  The connecting middle segment has length $1/({\beta_n}(3-\tau))$+tight, and it contains only vertices with degree at least of order $n^{(1-f_n){\beta_n}}$, thus all the degrees on this segment are comparable to the maximal degree.

 Our theorems also apply when instead of truncating the degrees, we start with a configuration model and we \emph{remove} every vertex with degree at least $n^{{\beta_n}}$, and the edges attached to these vertices. This sheds light on the \emph{attack vulnerability} of the configuration model with infinite variance degrees.
 \end{abstract}

\maketitle

\section{Introduction and results}
Many real-world networks are claimed to be {\em small worlds}, meaning that their graph distances are quite small. In social networks, such small distances go under the name of the `six-degrees-of-separation' paradigm and have attracted attention due to the interesting experiments by Milgram \cite{Milg67, TraMil69}. See also Pool and Kochen \cite{PooKoc78} as well as \cite{DodMuhWat03}, where a related  experiment is described on the basis on email messages. After this start in social sciences, the small-world nature of many other networks was first described by Strogatz and Watts \cite{WatStr98}. A popular account of small-world aspects of networks can be found in the book by Watts \cite{Watt99}. See also the surveys by Newman \cite{Newm03a} and Albert and Barab\'asi \cite{AlbBar02} on real-world networks.

The reported small-world nature of many real-world networks has incited a deep and thorough study of typical distances in random graphs. See the highly influential paper by Newman, Strogatz and Watts \cite{NewStrWat00}, who pioneered this line of research. There is a deep relation between the small-world nature of networks and their other often reported common feature, the {\em scale-free paradigm}, which states that the proportion of vertices of degree $k$ in many networks scales as an inverse power of $k$ for $k$ large. This scale-free nature implies that there are many vertices with very high degrees, and these {\em hubs} drastically shrink graph distances. The common picture in many random graph models is that graph distances are asymptotically \emph{logarithmic} in the graph size when the degrees have finite-variance degrees \cite{FerRam04, HofHooVan05a, NewStrWat00}, while they are \emph{doubly logarithmic} or {\em ultrasmall} when the graphs have infinite-variance degrees \cite{ChuLu01, ChuLu04, CohHav03, DomHofHoo10, HHZ07, NorRei06}. See also \cite{H102} for a discussion of many of the available results. The conclusion is that typical distances in random graphs are closely related to their degree structure, and larger degrees significantly shrink graph distances.

In many real-world networks, not only power-law degree sequences are observed, but also power laws with a so-called {\em exponential truncation}. This means that, even though the degree distribution for small values is close to a power law, for large values the tail distribution becomes exponentially small. This occurs e.g.\ in sexual networks \cite{LilEdlAmaSta01}, the Internet Movie Data base \cite{AmaScaBarSta00}, and in scientific collaboration networks \cite{Newm01b, Newman2001}.
Of course, it is not easy to guess whether a distribution has a power law, or rather a power law with exponential truncation. Newman and collaborators give sensible suggestions on how to approach these issues in real-world networks \cite{ClaShaNew09, Newm05}.

The value above which the exponential decay starts to set in is the {\em truncation parameter}. Naturally, when the degrees already had finite variance before truncation, they will remain to have so after truncation, so nothing much happens and graph distances ought to behave as in the finite-variance setting in \cite{HofHooVan05a}. Further, any power-law distribution with exponential truncation with a {\em bounded} truncation parameter has all moments, and thus distances ought to become \emph{logarithmic} as in the finite-variance case in e.g., \cite{HofHooVan05a}, accounting to `strictly' \emph{small-world networks} \cite{WatStr98}. The situation changes dramatically when dealing with infinite-variance degrees, that is, when the power-law exponent is below $3$. Indeed, in this setting, it is well known that typical paths realizing the graph distance pass through the vertices of highest possible degrees (i.e., the {\em hubs}) and typical distances grow much slower, \emph{doubly-logarithmically} with the size of the graph, accounting to \emph{ultrasmall worlds} \cite{HHZ07,NorRei08}. 
However, truncating the degrees could possibly have a dramatic effect, and could possibly increase the graph distances rather substantially.
In fact, even though the asymptotic variance of the degrees remains infinite after truncation, distances might grow substantially with the truncation and the graph might fail to be an ultra-small world.

The main aim of this paper is to quantify the effect of truncation of the degrees in random graphs models, in particular, in the configuration model that we define below. For simplicity, we state our result as a theorem without bothering to explain the somewhat tedious details and conditions. Below, we give several more accurate and detailed versions of this theorem.
\begin{theorem*}\label{thm:meta}
Let us consider the configuration model on $n$ vertices with empirical degree distribution that follows a power law with exponent $\tau\in(2,3)$, satisfying some appropriate regularity assumptions and truncated at degrees $n^{{\beta_n}}$, where ${\beta_n} (\log n)^\gamma\to \infty$ for some $\gamma\in(0,1)$. Let $\mathrm d_G(v_r, v_b)$ denote the graph distance between two uniformly chosen vertices $v_r, v_b$. Then
\be\label{eq:meta-dist} d_G(v_r, v_b) - 2\frac{\log \log (n^{{\beta_n}})}{|\log (\tau-2)|} -\frac{1}{(3-\tau) {\beta_n}} \ee
is a tight sequence of random variables.
\end{theorem*}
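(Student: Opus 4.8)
The plan is to split the analysis into a \emph{branching-process phase}, which controls the neighbourhoods of $v_r$ and $v_b$ up to the degree scale $n^{\beta_n}$ imposed by the truncation, and a \emph{core phase}, which controls how the two resulting clusters of hubs connect to each other. The hypothesis $\beta_n(\log n)^{\gamma}\to\infty$ with $\gamma\in(0,1)$ plays two roles here: it forces $\beta_n\log n\to\infty$, so that $\log\log(n^{\beta_n})=\log(\beta_n\log n)\to\infty$ and the first term is genuinely a diverging length scale; and it forces $1/((3-\tau)\beta_n)=o((\log n)^{\gamma})=o(\log n)$, so that the graph stays in the ultra-small/intermediate regime rather than collapsing into the logarithmic (finite-variance-like) regime, where the argument would be of a different nature.

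\emph{Step 1: local coupling and doubly-exponential growth.} I would couple the breadth-first exploration from $v_r$ (and, independently, from $v_b$) to a branching process whose offspring law is the size-biased truncated empirical degree distribution shifted by $1$. Since $\tau\in(2,3)$, this offspring law has tail $\asymp x^{-(\tau-2)}$ cut off at $n^{\beta_n}$, hence finite mean $\nu_n\asymp n^{(3-\tau)\beta_n}$ but a very heavy upper tail. While the explored set has size $o(\sqrt n)$ the coupling is valid, and the maximal degree $M_k$ appearing in generation $k$ obeys $\log M_k=\tfrac{1}{\tau-2}\log M_{k-1}+\Op(1)$, so that $\log M_k=(\tau-2)^{-k}\,(W_r+\op(1))$ for a tight, a.s.\ positive random variable $W_r$ that is a measurable functional of the limiting infinite-mean branching process; this growth persists up to the truncation level. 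Consequently the first generation $k_r$ at which the exploration from $v_r$ contains a vertex of degree $n^{(1-o(1))\beta_n}$ satisfies
\[
 k_r \;=\; \Big\lceil \frac{\log\log(n^{\beta_n})-\log W_r}{|\log(\tau-2)|}\Big\rceil \;=\; \frac{\log\log(n^{\beta_n})}{|\log(\tau-2)|}+\Theta_r ,
\]
with $\Theta_r$ tight; the same holds from $v_b$ with an independent $\Theta_b$. (The ceiling is what produces the $\log\log$-periodic, non-convergent fluctuations of the abstract; for mere tightness one needs only $\Theta_r$ bounded.) Read as an upper bound, the same estimate also supplies the first half of the distance lower bound: with probability tending to $1$ as $C\to\infty$, up to generation $\tfrac{\log\log(n^{\beta_n})}{|\log(\tau-2)|}-C$ the neighbourhood of $v_r$ contains no vertex of degree exceeding $n^{(1-\delta)\beta_n}$.

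\emph{Step 2: the core and the middle segment.} For the upper bound, from the hub $h_r$ of degree $n^{(1-o(1))\beta_n}$ reached in Step 1, and from $h_b$, I would grow greedy ``hub-clusters'': a vertex of degree $\asymp n^{\beta_n}$ has $\asymp n^{\beta_n}$ half-edges, each pairing with a half-edge of another such vertex with probability $\asymp (n^{\beta_n})^{-(\tau-2)}$, so it reaches $\asymp n^{(3-\tau)\beta_n}$ new hubs of comparable degree; after $j$ steps the cluster from $h_r$ therefore carries $\asymp n^{(j(3-\tau)+1)\beta_n}$ free half-edges. A birthday/second-moment estimate then shows the clusters from $h_r$ and $h_b$ become joined by an edge as soon as the product of their half-edge counts exceeds $\ell_n\asymp n$, i.e.\ once $2j(3-\tau)\beta_n\gtrsim 1$, so $\mathrm d_G(h_r,h_b)\le 1/((3-\tau)\beta_n)+$ tight. (Optimising over the hub scale $n^{\alpha\beta_n}$ shows $\alpha=1$ is best, which is why no smaller constant appears.) Concatenating the two end-segments with this middle segment gives $\mathrm d_G(v_r,v_b)\le 2\tfrac{\log\log(n^{\beta_n})}{|\log(\tau-2)|}+\tfrac{1}{(3-\tau)\beta_n}+C$ with probability $\to1$ as $C\to\infty$. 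For the matching lower bound I would estimate the first moment of the number of self-avoiding paths $v_r=u_0,\dots,u_\ell=v_b$ of length $\ell$, which in the configuration model is $\asymp \ell_n^{-\ell}d_{v_r}d_{v_b}\prod_{i=1}^{\ell-1}d_{u_i}(d_{u_i}-1)$, summed over degree profiles. Using the layered bound of Step 1 (the ball of radius $a$ around $v_r$, resp.\ $v_b$, has maximal degree at most $\exp(c(\tau-2)^{-a})$), one shows that any profile contributing a non-$o(1)$ amount must ramp up through increasing degrees over at least $\tfrac{\log\log(n^{\beta_n})}{|\log(\tau-2)|}-C'$ steps at the $v_r$-end and as many at the $v_b$-end, and must spend at least $(1-o(1))/((3-\tau)\beta_n)$ steps among near-maximal-degree vertices in between: profiles trying to shortcut the ends are killed by the doubly-exponential growth barrier, while profiles trying to shortcut the middle pay a factor $\nu_n^{-\#\{\text{middle steps}\}}=n^{-(3-\tau)\beta_n\,\#}$. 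Hence $\mathrm d_G(v_r,v_b)\ge 2\tfrac{\log\log(n^{\beta_n})}{|\log(\tau-2)|}+\tfrac{1}{(3-\tau)\beta_n}-C$ with probability $\to1$ as $C\to\infty$, and combining the two bounds yields tightness.

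\emph{Main obstacle.} The delicate part is the lower bound and, within it, the interplay between the two $\log\log$ contributions and the middle term: one must prove that a shortest path is \emph{forced} into the three-phase ramp-up/core/ramp-down shape and cannot trade ramp steps for middle steps. This demands a careful scale-by-scale first-moment bound over degree profiles, and --- the true technical bottleneck --- controlling the coupling error between the graph exploration and the branching process precisely in the crossover window (explored sets of size around $\sqrt n$, degrees around $n^{1/2}$), where neither the pure branching-process picture nor the pure core picture is accurate, together with the dependence between the two explorations once they are no longer microscopic. By comparison, the branching-process input of Step 1 and the second-moment input of Step 2 are comparatively standard.
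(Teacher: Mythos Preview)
Your two-phase decomposition (doubly-exponential climb to hubs, then a $\nu_n\sim n^{(3-\tau)\beta_n}$-driven connection among hubs) is exactly the structure of the paper's proof, and your arithmetic matches. There are, however, a few places where the paper's execution differs from what you propose, and one place where you have misidentified the main obstacle.

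\medskip
\textbf{Coupling versus shells.} The paper does \emph{not} keep the branching-process coupling alive until a vertex of degree $n^{(1-o(1))\beta_n}$ appears. The coupling is stopped at cluster size $n^{\rho_n'}$ for a \emph{small} $\rho_n'$ (bounded above by $(\tau-2)\min\{\beta_n\kappa,(1-\beta_n)/2,\ldots\}$), and from there the remaining climb to the hubs is done by a deterministic shell argument: with $u_{i+1}^{(q)}=(u_i^{(q)}/h(u_i^{(q)}))^{1/(\tau-2)}$, whp every vertex of degree $\ge u_i^{(q)}$ has a neighbour of degree $\ge u_{i+1}^{(q)}$. This sidesteps entirely the ``crossover window around $\sqrt n$'' that you flag as the main technical bottleneck; that window simply never arises in the paper's argument.

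\medskip
\textbf{The real workhorse.} What actually carries the lower bound is the \emph{bad-paths lemma}: a first-moment path-counting estimate showing that from $\mathcal C^{(q)}_{t(n^{\rho_n'})}$ no path of length $i$ can reach a vertex of degree exceeding $\hat u_i^{(q)}$ (the upper recursion dual to the shells). This is your ``layered bound of Step~1'', and once it is in hand the middle-segment lower bound is much simpler than your degree-profile summation. The bad-paths lemma bounds the \emph{total} half-edge count of $\mathcal C^{(q)}_{T_q(\beta_n)}$ by $\sim n^{\beta_n(\tau-2)^{b_n^{(q)}}}$, and then a single first-moment bound on paths of length $z+1$ between the two clusters (unrestricted interior vertices, each contributing a factor $\mathbb E[D_n^\star]\sim n^{(3-\tau)\beta_n}$) shows no path of length $\le z_n^\star$ exists. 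There is no need to decompose over profiles or to argue separately that paths cannot ``trade ramp steps for middle steps''.

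\medskip
\textbf{Upper bound on the middle.} The paper does not grow hub-clusters from both sides and invoke a birthday collision; it runs a second-moment method directly on the number of paths of length $z_n^\star+1$ between the two specific vertices $v_r^\star,v_b^\star$, restricting the $i$th interior vertex to lie in a prescribed slice $\Delta_i$ of the vertex set so that the variance remains tractable. Your hub-cluster heuristic gives the right order of magnitude, but the variance control (showing $\kappa_1/(\nu_1^2 d_{v_q^\star})\to 0$, which is exactly where the hub condition $d_{v_q^\star}>n^{\beta_n(\tau-2)}$ is used) is where the work actually is.
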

For the precise result see Theorem \ref{thm:distances} below. In that theorem, we also identify the distributional limit of the tight sequence of random variables along subsequences. This was first done in \cite{HHZ07} using different methods for the special case $\beta_n\equiv 1/(\tau-1)$. We show in Section \ref{s:compare} that the seemingly different formulations, Theorem \ref{thm:distances} and the result in \cite{HHZ07} are actually the same.

Note that as soon as ${\beta_n}=o(1/\log\log n)$,  the term containing $1/{\beta_n}$ becomes dominant, and the second order term is of order $\log\log n$. Thus, the random graph fails to be an ultrasmall world in a strict sense as soon as the truncation happens at degrees at most $n^{o(1/\log\log n)}$. However, when ${\beta_n} \log \log n \to \infty$, the dominant term is the first one, of order $\log \log n$.
This theorem shows that distances in truncated power-law graphs with infinite asymptotic variance \emph{interpolate} between ultrasmall worlds and small worlds.

This sheds light to a discussion in the physics literature \cite{CohHav03, Doro2003, Fron2003, FroFro04, FroFro05, NewStrWat00} about the validity of the formula derived using generating function methods or $n$-dependent branching process approximations, stating that
\be\label{eq:physics} \mathrm d_G(v_r, v_b) = \frac{\log n}{\log \nu_n} + \text{tight} = \frac{1}{(3-\tau){\beta_n}}+\text{tight}\ee
with $\nu_n$ standing for the empirical second moment\footnote{We show in Claim \ref{cl:tech} and \eqref{eq:edn} below that $\nu_n\sim n^{{\beta_n}(3-\tau)}$, thus, $\log n/\log \nu_n$ in \eqref{eq:physics} yields $1/({\beta_n}(3-\tau))$, justifying the equality between the two expressions.} of the degrees and ${\beta_n}$ is the truncation exponent.
Compare this formula to the one in \eqref{eq:meta-dist} and note that this formula yields the first order term if and only if ${\beta_n}=o(1/\log \log n)$, while even in this case it fails to capture the second order term, which is of order $\log \log n$.

While \cite{NewStrWat00} questions the validity of this formula for $\tau\in(2,3)$, \cite{Fron2003, FroFro04, FroFro05}  argue that a constant ${\beta_n}\equiv {\beta}$ for the truncation exponent yields bounded typical distances in this regime, in agreement with what \eqref{eq:physics} suggests. This contradicts the arguments in \cite{CohHav03} where the authors show that the smallest achievable order for typical distances is $\log\log n$.

Closest to our result is the work of Dorogovtsev \emph{et al.} \cite{Doro2003}, who, using (non-rigorous) generating function methods, study typical distances in the configuration model with truncated power-law distributions of the form $\Pv(D=k)\sim k^{-\tau} \zeta^k$, for constant $\zeta<1$, and derive that
\be \ba \label{eq:doro} \mathrm d_G(v_r, v_b) &= \frac{\log n}{(3-\tau)|\log (1-\zeta)|} +  \frac{2\log |\log (1-\zeta)|}{|\log (\tau-2)|}+\text{tight},\\
&=\frac{1}{(3-\tau){\beta_n}} +  \frac{2 \log \log (n^{{\beta_n}})}{|\log (\tau-2)|} + \text{tight},
\ea\ee
where we took the liberty to set $\zeta:=\exp\{-1/n^{{\beta_n}}\}$ to obtain the second line. Observe that our (rigorous) result in \eqref{eq:meta-dist} above is in perfect agreement with this.  The region of validity of their approach \cite[(106)]{Doro2003} translates to the requirement that ${\beta_n}=o(1/\log \log n)$, can be interpreted that their method requires that the first term is the leading order term in \eqref{eq:doro}.
We explain below in Section \ref{s:discussion} why the term $\log \log n$ is missing from \eqref{eq:physics} and where it comes from.

Let us make a comparison: two models with the same maximal degree, a truncated and an un-truncated one. More precisely, take a model with power-law exponent $\tau\in (2,3)$ and truncation at ${\beta_n}={\beta}<1/(\tau-1)$ fixed. In a model with power-law exponent $\wit\tau$ and natural truncation (that is, $\wit {\beta}=1/(\wit \tau -1)$), the maximal degree is  $n^{1/(\wit\tau-1)}$. Setting the maximal degrees in the two models, -- $n^{{\beta}}$ and $n^{1/(\wit\tau-1)}$, respectively -- to be equal yields the relation $\wit\tau=1+1/{\beta}$. Comparing distances in the two models, we see an interesting phenomenon. When ${\beta_n}<1/2$, the un-truncated model has $\wit\tau=1+1/{\beta}>3$, implying that  $\lim_{n\to\infty}\Ev[(D_n')^2 ]<\infty$, which, in turn, implies that  distances jump up to \emph{logarithmic} order. In the truncated model distances are of order $\log\log n$, as described by \eqref{eq:meta-dist}. When ${\beta_n}>1/2$, the leading order of distances in the truncated model is $\log \log n/|\log (\tau-2)|$+tight, while in the un-truncated model it is $\log \log n/|\log (\wit\tau-2)|+tight=\log \log n/|\log (1/{\beta}-1)|+tight$. Since ${\beta}<1/(\tau-1)$ by assumption, $1/{\beta}-1>\tau-2$ and hence the distances in the latter model are larger. This means that `re-parametrizing' the truncated model by another power-law ($\tau'$) that would more naturally reflect the maximal degree is not the same as truncating, even the leading term changes. This effect is extreme when ${\beta}<1/2$, in which case the truncated and the un-truncated models do not even belong to the same universality class (ultrasmall versus small world). 

Let us comment on the choice of model as well. We expect that the same result is true for the giant component of the Chung-Lu or Norros-Reitu model when the power law exponent is $\tau\in(2,3)$. These models behave qualitatively similarly to the configuration model. In fact, the ultrasmall nature of these networks were pointed out in \cite{NorRei06, NorRei08}. The independence of the edges conditioned on vertex-weights even makes path-counting methods easier, in fact, we expect that the same proof that we provide here could be applied for these models as well. 

\emph{Attack vulnerability.}
The removal of all vertices above a certain degree in a network is called a \emph{targeted attack} or \emph{deliberate attack}.
An immediate corollary (see Corollary \ref{cor:attack} below) of our work is that our results remain valid when instead of truncating the degrees, we \emph{remove} all vertices with degree at least $n^{\wit{\beta_n}}$ from a configuration model. In particular, distances after a targeted attack are described by \eqref{eq:meta-dist}, with $\beta_n$ replaced by $\wit\beta_n$. This theorem also sheds light to how distances gradually grow from ultrasmall to small world when vertices with  smaller and smaller degree are gradually  removed. A similar analysis have been carried out for a variant of the preferential attachment model in \cite{EckMor14}, for the special case when all degrees above order $\log n$ (equivalently, the oldest $\ve n$ vertices for small small $\ve>0$) are removed.   They show that distances in this case grow logarithmically, giving a strong base for our conjecture that the formula in \eqref{eq:meta-dist}  should also be valid for $\beta_n=\Theta(1/\log n)$, which is, at least with the methods of this paper, beyond our reach.

\subsection{The model and the main result}
In this paper we work under the setting of  the configuration model $\CMD$. In this random graph model, there are $n$ vertices, with prescribed degrees $d_v, v\in \{1,2,\dots, n\}:=[n]$. To each vertex $v\in [n]$ we assign $d_v$ half-edges and the half-edges are then paired uniformly at random to form
edges.
We assume that the total number of half-edges $\ell_n:=\sum_{v\in[n]} d_v$ is even. We denote the outcome - a graph-valued random variable - by $\CMD$.
\subsubsection{Setting and assumptions}
We study the case when the empirical degree distribution follows a possibly truncated power law, with an exponent that gives rise to empirical variance tending to infinity with $n$ and when the truncation happens at some polynomial of $n$. To make this precise, we impose the following three assumptions on the \emph{empirical degree distribution}, $F_n(x):=\frac1n \sum_{v\in [n]} \ind_{\{d_v\le x\}}$:

\begin{assumption}[Power-law tail behavior]\label{assu:degree-dist}
	There exists a  ${\beta_n}\in (0, 1/(\tau-1)]$ such that  for all $\ve>0$,
$F_n(x)=1$ for $x\ge n^{{\beta_n}(1+\ve)}$, while for
 all $x\le n^{{\beta_n}(1-\ve)}$,
\be\label{eq:F} 1- F_n(x)= \frac{L_n(x)}{x^{\tau-1}},\ee
 with $\tau \in (2,3)$, and a function $L_n(x)$ that satisfies for some constant $C_1>0$ and $\eta\in (0,1)$ that
 \be\label{eq:L}\exp\{ -C_1 (\log x)^\eta  \} \le L_n(x)\le \exp\{ C_1 (\log x)^\eta  \}.\ee
\end{assumption}
\begin{assumption}[Minimal degree at least $2$]\label{assu:mindeg}
$\min_{v\in [n]} d_v \ge 2$.
\end{assumption}
See Remark \ref{rem:no-mindeg} for an extension of our results in the case when $\min_{v\in [n]} d_v =1$.
We write $D_n$ for a random variable having distribution $F_n$. Then $D_n$ is the degree of a uniformly chosen vertex from $[n]$. We introduce $D^\star_n:=$(the \emph{size-biased version} of $D_n)-1$ by
 	\be\label{def:size-biased1}
	\Pv(D^\star_n=j):=\frac{j+1}{\ell_n} \sum_{v\in[n]} \ind_{\{d_v=j+1\}}=\frac{(j+1)\Pv(D_n=j+1)}{\Ev[D_n]}\text{, }\quad j\geq 0.
	\ee
We write $F^\star_n(x)$ for the distribution function of $D^\star_n$.
Note that for all  $x\le n^{{\beta_n}(1-\ve)}$, \[ 1- F_n^\star(x)=\frac{1}{\Ev[D_n]}\sum_{j\ge x} (j+1)\Pv(D_n=j+1)\ge\frac{1}{\Ev[D_n]} x (1-F_n(x)), \]
and  similarly, under Assumption \ref{assu:degree-dist},
\[ \ba 1- F_n^\star(x)&\le (x+1) (1-F_n(x+1)) + \sum_{j\ge x} (1-F_n(x)) \\
&\le \frac{2}{\Ev[D_n]} \frac{L_n(x+1)}{x^{\tau-2}} + \frac{1}{\Ev[D_n]}\sum_{j\ge x} \frac{L_n(j)}{j^{\tau-1}}.\ea\]
By a Karamata-type theorem (see \cite[Proposition 1.5.10]{BinGol}) the latter sum on the rhs is at most $2 L_n(x)/(x^{\tau-2} (\tau-2))$ for all large enough $x$. Further, for $x\ge n^{{\beta_n}(1+\ve)}$ it is obvious from the definition \eqref{def:size-biased1} that $1-F_n^\star(x)=0$.
Thus, it follows
that for all $\ve>0$ and $x\le n^{{\beta_n}(1-\ve)}$, $F_n^{\star}$ satisfies that
\be \label{eq:size-biased2}  1- F^\star_n(x)= \frac{L^\star_n(x)}{x^{\tau-2}},\ee
with a function $L^\star_n(x)$ satisfying \eqref{eq:L} again (possibly with a different constant $C^\star_1$ instead of $C_1$ in the exponent in \eqref{eq:L}  for $L_n^\star$).

To be able to state convergence results, we will need an assumption that relates the behavior of $F_n$ and $F_n^\star$ for different values of $n$ to a limiting distribution function. 
We write $\mathrm{d}_{\sss{\mathrm{TV}}}(F, G):=\tfrac12 \sum_{x\in \N} |F(x+1)-F(x) - (G(x+1)-G(x)) |$ for the total variation distance between two (discrete) probability measures.
The weakest form of such assumption that we can pose is captured in the following assumption:
\begin{assumption}[Convergence to limiting distributions]\label{assu:tv}
We assume that there exist distribution functions $F(x)$, $F^\star(x)$ such that $F_n(x)\to F(x)$ and $F_n^\star(x)\to F^\star(x)$ in all continuity points of $F(x)$, $F^\star(x)$. We assume that there exists a $\kappa>0$,
\be\label{eq:tv-dist-1} \max\{ \mathrm{d}_{\sss{\mathrm{TV}}}(F_n, F), \mathrm{d}_{\sss{\mathrm{TV}}}(F_n^\star, F^\star) \} \le n^{-{\beta_n}\kappa}.\ee
\end{assumption}
 Let us write $D, D^\star$ for random variables following the limiting distributions $F, F^\star$ in Assumption \ref{assu:tv}, respectively.  Since total variation convergence equals weak convergence for discrete random variables, $D_n\toindis D$ and $D_n^\star\toindis D^\star$. Using \eqref{def:size-biased1}, we further obtain that
 \be \label{eq:limit-sb} \Pv(D^\star=j) = \frac{(j+1)\Pv(D=j)}{\Ev[D]}, \ee
thus $D^\star$ is the (size-biased version$-1$) of $D$.
 Further note that $F$ can be written in the form \eqref{eq:F}, and $F^\star$ as in \eqref{eq:size-biased2} with limiting $L, L^\star$ that satisfies \eqref{eq:L}. Note that the limit variables are not truncated. Further, the bound $n^{-{\beta_n} \kappa}$ in Assumption \ref{assu:tv} is best possible since $\mathrm{d}_{\sss{\mathrm{TV}}}(F_n, F) \ge \Pv(D>n^{{\beta_n}}) \ge n^{-{\beta_n}(\tau-1)}\ell(n^{-{\beta_n}(\tau-1)})\ge n^{-{\beta_n}(\tau-1-\delta)}$. It is also reasonable, e.g. it can be shown that it is satisfied in Examples \ref{ex:iid}-\ref{ex:trunc-hard} below. 

Under Assumptions \ref{assu:degree-dist} and \ref{assu:mindeg}, the graph almost surely has a unique connected component of size $n(1-o_{\Pv}(1))$ see e.g.\ \cite[Vol II., Theorem 4.1]{H10} or \cite{MolRee95, MolRee98}, or the recent paper \cite{FedHof16}.

We provide three examples in Section \ref{s:examples} below (i.i.d.\ degrees, exponential and hard truncation) that satisfy Assumption \ref{assu:degree-dist}, see Examples \ref{ex:iid}, \ref{ex:trunc}, \ref{ex:trunc-hard} below, as well as collect some references to networks following such empirical degree distributions.

In this paper we study typical distances, that is, the graph distance $\mathrm d_G$ between \emph{two uniformly chosen vertices} in the graph. For the sake of the proof, we denote these vertices by $v_r$ and $v_b$ and think of them as being red and blue, respectively.
\begin{definition}[With high probability]
We say that a sequence of events $\CE_n$ happens with high probability under the measure $\mathbb Q$ (and abbreviate this as $\mathbb Q$-whp), if $\mathbb Q(\CE_n) \to 1$
as $n\to \infty$. We write simply `whp' when the measure is the annealed measure of the configuration model and the two uniformly chosen vertices $v_r, v_b$.
\end{definition}
We emphasize that in the setting of this paper, whp statements should be read as follows: for asymptotically almost every realizations of the random graph $\CMD$ and almost all pairs of vertices $(v_r, v_b)$, the statement is true.
\begin{definition}[$\sim$ notation]\label{def:sim}
We use the shorthand notation $X_n \sim a_n$ if there exists a constant $\theta\in(0,1)$ such that
\be\label{eq:sim} X_n \sim a_n \quad \Longleftrightarrow\quad \Pv\left( X_n \in  [a_n \e^{-(\log a_n)^{\theta}}, a_n \e^{(\log a_n)^{\theta}}] \right) \to 1.\ee We call vertices with degree at least $\sim n^{(\tau-2){\beta_n}}$ \emph{hubs}.
\end{definition}
Note that $X_n\sim n^a$ is somewhat stronger than stating that $X_n=n^{a(1+o_{\Pv}(1))}$.

The statement of the main theorem uses some knowledge about infinite-mean branching processes, as well as
their coupling to the local neighborhood of the two vertices $v_r, v_b$. So, before stating the result, we have to do a small excursion into defining these objects.
In particular, Lemma \ref{lem:couple} and Corollary \ref{corr:couple} below, based on \cite[Proposition 4.7]{BHH10}, states that under Assumption \ref{assu:tv}, whp, the number of vertices and their forward degrees in an exploration of the neighborhood of $v_r, v_b$ can be coupled to two independent branching processes (that are embedded in the graph disjointly, and have offspring distribution $F^\star$ for the second and further generations, and with offspring distribution given by $F$ for the first generation), as long as the total number of vertices of the explored clusters does not exceed $n^{\vr}$ for some small $\vr>0$.
Let us do the exploration in a breadth-first-search manner, and then the exploration at time $t$ contains all the vertices with at most distance $t$ away from $\CC^{\sss{(r)}}_0:=\{v_r\}$ and $\CC^{\sss{(b)}}_0:=\{v_b\}$. We shall denote these clusters by $\CC^{\sss{(r)}}_t, \CC^{\sss{(b)}}_t$ (i.e., the vertices and their graph structure), and think of them as being colored red and blue, respectively. Similarly, we denote the number of vertices in the $k$th generation of the coupled branching processes by $(Z_{k}^{\sss{(r)}}, Z_k^{\sss{(b)}})_{k>0}$.
The next definition, describing the \emph{double-exponential growth rates} of these neighborhoods, uses this coupling:
  \begin{definition}[Double-exponential growth rates of local neighborhoods]\label{def:limit-variables}
  Let $(Z_k^{\sss{(r)}}, Z_k^{\sss{(b)}})$ denote the number
 of individuals in the $k$th generation of the two independent copies of a Galton-Watson process, coupled to the breadth-first-search exploration process of the neighborhoods of $v_r$ and $v_b$ in the configuration model. In these branching processes, the size of the first generation has distribution $F$,
 and all further generations have offspring distribution $F^\star$ from Assumption \ref{assu:tv}.  Then, for some $\vr_n'<(\tau-2)\min\{ 
 {\beta_n}\kappa, (1-{\beta_n}(1+\ve))/2, (\tau-2-2\ve)/(2(\tau-1))\}$, let us define
\be\label{def:yrn-ybn}\Yrn:=(\tau-2)^{t(n^{\vr_n'})} \log (Z^{\sss{(r)}}_{t(n^{\vr_n'})}),  \quad \Ybn:=(\tau-2)^{ t(n^{\vr_n'})} \log (Z^{\sss{(b)}}_{ t(n^{\vr_n'})}),\ee
  where $t(n^{\vr_n'})=\inf_k\{\max\{ Z_k^{\sss{(r)}}, Z_k^{\sss{(b)}} \} \ge n^{\vr_n'}\}$.
Let us further introduce
  \be\label{def:Y} Y_r:= \lim_{k\to\infty} (\tau-2)^k \log (Z_k^{\sss{(r)}}), \quad Y_b:=\lim_{k\to\infty} (\tau-2)^k \log (Z_k^{\sss{(b)}}).\ee
\end{definition}
Note that the limit variables in \eqref{def:Y} are independent of $\rho_n'$.
 Further, note that $(\Yrn, \Ybn)$ is a subsequence of the convergent sequence $\left( (\tau-2)^k \log (Z_k^{\sss{(r)}}),  (\tau-2)^k \log (Z_k^{\sss{(b)}})\right)$, taken at the subsequence  $k_n:=t(n^{\rho_n'})$. Since
 for any $\rho'>0$, $t(n^{\rho_n'}) \to \infty$ as $n\to \infty$, under Assumption \ref{assu:tv} we shall obtain that $(\Yrn, \Ybn)\toindis (Y_r,Y_b)$ as $n\to \infty$.
When (and only when) ${\beta_n}=1/(\tau-1)$, we shall need one more assumption that concerns the limiting distribution of $Y_r, Y_b$:

\begin{assumption}[No pointmass of the measure of $Y$]\label{assu:pointmass}
We assume that the limiting random variable $Y:=\lim_{k\to\infty} (\tau-2)^k\log (\max\{ Z_k,1\})$ of the branching process in Definition \ref{def:limit-variables} has no point-mass on $(0,\infty)$.
\end{assumption}
The criteria on $F$ in Assumption \ref{assu:tv} required for this assumption to hold are not obvious. According to our knowledge, no necessary and sufficient condition for no point mass or absolute continuity can be found in the literature. A sufficient criterion for absolute continuity of $Y$ is given in \cite{Sene73, Sene74}.

 To be able to state the results shortly, let us define the $\sigma$-algebra generated by the induced subgraph on $\CC^{\sss{(r)}}_{t(n^{\vr_n'})} \cup \CC^{\sss{(b)}}_{t(n^{\vr_n'})}$:
 \be\label{eq:sigma-algebra} \CF_{\vr_n'}:=\sigma\left( \CC^{\sss{(r)}}_{t(n^{\vr_n'})} \cup \CC^{\sss{(b)}}_{t(n^{\vr_n'})}\right) \ee and introduce the shorthand notation\footnote{The notation comes from the fact that $\Yrn, \Ybn \in \CF_{\vr_n'}$.}
  \be\label{eq:cond-measure} \Pv_{\sss{Y}}(\cdot):= \Pv(\cdot| \CF_{\vr_n'}), \quad \Ev_{\sss{Y}}(\cdot):= \Ev(\cdot| \CF_{\vr_n'}) \ee
Further define, for $q\in \{r,b\}$,
\be\label{def:ti-bi} T_q({\beta_n}):=\left\lfloor\frac{\log\log (n^{\beta_n}) -\log(Y_q^{\sss{(n)}})}{|\log (\tau-2)|}\right\rfloor-1, \quad
b_n^{(q)}({\beta_n}):= \left\{\frac{\log\log (n^{\beta_n}) -\log(Y_q^{\sss{(n)}})}{|\log (\tau-2)|}\right\},\ee
where $\lfloor x\rfloor $ denotes the largest integer that is at most $x$ and $\{x\}= x-\lfloor x\rfloor$ denotes the fractional part of $x$. Further, let $\lceil x\rceil$ denote the smallest integer that is larger than $x$.
\subsubsection{Typical distances}

\begin{theorem}[Distances in truncated power-law configuration models]\label{thm:distances}
Consider the configuration model with empirical degree distribution satisfying Assumptions \ref{assu:degree-dist}-\ref{assu:tv} with some ${\beta_n}\in (0, 1/(\tau-1)]$ such that ${\beta_n}(\log n)^\gamma\to \infty$ for some $\gamma\in(0,1)$. When ${\beta_n}\to 1/(\tau-1)$ then we require that Assumption \ref{assu:pointmass} holds additionally. 
\be\label{eq:dist-beta-small-p2}
\mathrm d_G(v_r, v_b) =T_r({\beta_n})+T_b({\beta_n})+ \left\lceil    \frac{1/{\beta_n}-(\tau-2)^{b_n^{\sss{(r)}}({\beta_n})}-(\tau-2)^{b_n^{\sss{(b)}}({\beta_n})}}{3-\tau} \right\rceil +1.
\ee

\end{theorem}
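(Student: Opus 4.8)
The plan is to prove the distance formula via matching upper and lower bounds, conditionally on the $\sigma$-algebra $\CF_{\vr_n'}$, using the branching-process coupling of Lemma~\ref{lem:couple} and Corollary~\ref{corr:couple} to translate neighborhood growth into the explicit quantities $T_q(\beta_n)$ and $b_n^{\sss(q)}(\beta_n)$. First I would decompose the shortest path into three segments as announced in the abstract: the two ``end-segments'' emanating from $v_r$ and $v_b$, and a ``middle segment'' connecting them through the hubs. For the end-segments, the key fact is that the coupled branching processes have doubly-exponential growth: once we reach generation $t$, the cluster size is $\sim \exp\{(\tau-2)^{-t} Y_q^{\sss(n)}\}$, equivalently the typical degree encountered at generation $t$ is $\sim \exp\{(\tau-2)^{-(t+1)} Y_q^{\sss(n)}\}$. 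The end-segment grows until the degrees reach the truncation ceiling $n^{\beta_n}$, i.e.\ until $(\tau-2)^{-(t+1)} Y_q^{\sss(n)} \approx \beta_n \log n = \log(n^{\beta_n})$, which solves to $t \approx T_q(\beta_n)$ and leaves a ``residual'' $b_n^{\sss(q)}(\beta_n) \in [0,1)$ in the exponent. This part I would extract from the branching-process theory underlying Definition~\ref{def:limit-variables}, together with a first/second moment (path-counting) argument showing the cluster at level $T_q(\beta_n)$ does contain vertices of degree $\sim n^{(\tau-2)^{b_n^{\sss(q)}} \beta_n}$ whp but not much more.

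Next I would handle the middle segment. After the two end-segments, each exploration front sits on vertices of degree $\sim n^{(\tau-2)^{b_n^{\sss(q)}}\beta_n}$ whp, and from there the path must climb up to the true hubs (degree $\sim n^{\beta_n}$) and back down. The relevant structural input is Claim~\ref{cl:tech} and \eqref{eq:edn}, namely $\nu_n \sim n^{\beta_n(3-\tau)}$: the ``layers'' of high-degree vertices form a nested structure where a vertex of degree $\sim n^a$ is whp directly connected to a vertex of degree $\sim n^{a'}$ for a suitable larger $a'$, and the number of hops needed to traverse from exponent-level $a_1$ up through the maximum and back down to exponent-level $a_2$ is governed by a greedy-path / log-layer argument; the count comes out to $\lceil (1/\beta_n - (\tau-2)^{b_n^{\sss(r)}} - (\tau-2)^{b_n^{\sss(b)}})/(3-\tau)\rceil$, with the $+1$ accounting for the single edge that actually bridges the two ``cores''. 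The upper bound here is a direct construction: exhibit a path layer by layer using that the configuration-model pairing connects a set of $m$ half-edges to any fixed large-degree vertex with probability $\to 1$ when $m \cdot n^{\beta_n}/\ell_n$ is large. The matching lower bound requires showing one cannot do better, i.e.\ that the typical distances between the two explored clusters cannot be shorter than the greedy estimate; this I would get from a truncated first-moment bound over all candidate paths of the given length, using the tail bound \eqref{eq:size-biased2} on $D_n^\star$ to control the expected number of paths through vertices of prescribed degree-exponents.

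I would then assemble the pieces: $\mathrm d_G(v_r,v_b) = T_r(\beta_n) + T_b(\beta_n) + (\text{middle}) + 1$, taking care that the end-segment counts, the middle-segment count, and the single bridging edge are not double-counted — this bookkeeping is why the formula has the particular ``$\lceil\cdot\rceil + 1$'' shape and why the residuals $b_n^{\sss(q)}$ enter the numerator of the ceiling (the end-segments ``overshoot'' part of the way up, shortening the middle). Tightness of the error is automatic once both bounds hold up to an additive $O(1)$ that is in fact absorbed into the floor/ceiling operations; the conditioning on $\CF_{\vr_n'}$ makes $Y_r^{\sss(n)}, Y_b^{\sss(n)}$ measurable constants during the argument, and at the end one uses $(\Yrn,\Ybn) \toindis (Y_r,Y_b)$ (and Assumption~\ref{assu:pointmass} in the boundary case $\beta_n \to 1/(\tau-1)$, to rule out the ambiguous event that $Y_q$ lands exactly on an integer boundary of the floor).

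The main obstacle I expect is the lower bound on the middle segment: path-counting through the densely connected hub set is delicate because the relevant vertices have degrees comparable to the maximum, so the expected number of short paths is not obviously small and the usual first-moment method is on the edge of working. One must carefully truncate the collection of paths (restricting the degree-exponent profile along the path to be ``monotone up then down'', as the abstract's topology description suggests) and use the configuration-model switching/pairing estimates to show that paths violating the predicted length are rare; controlling the dependence introduced by re-using half-edges at the highest-degree vertices, and doing so uniformly over the $\log\log$-periodic fluctuation of $f_n$ and $b_n^{\sss(q)}$, is where the real work lies.
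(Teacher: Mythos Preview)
Your three-segment decomposition and the end-segment analysis via the branching-process coupling match the paper's architecture (Proposition~\ref{prop:hubs}). However, you have inverted where the difficulty lies, and one structural ingredient is missing from your outline.

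The lower bound for the middle segment is \emph{not} delicate. A plain first-moment count of all paths of length $z+1$ between the clusters $\CC^{\sss{(r)}}_{T_r(\beta_n)}$ and $\CC^{\sss{(b)}}_{T_b(\beta_n)}$, with no restriction whatsoever on the degree profile along the path, already gives (see \eqref{eq:free-from-end}--\eqref{eq:kell-connect})
\[
\Ev_{\sss{Y}}\bigl[N_z\bigr]\ \lesssim\ n^{-1+\beta_n\bigl((\tau-2)^{b_n^{\sss{(r)}}}+(\tau-2)^{b_n^{\sss{(b)}}}+z(3-\tau)\bigr)},
\]
which tends to zero for every integer $z$ below the ceiling in \eqref{eq:dist-beta-small-p2}. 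No ``monotone up then down'' truncation and no switching estimates are needed. What this \emph{does} require, and what you omit, is a bound on the \emph{total} number of half-edges attached to $\CC^{\sss{(q)}}_{T_q(\beta_n)}$ (not merely the degree of the top vertex $v_q^\star$): the first-moment path count starts from every half-edge in the cluster, so one must show $H\bigl(\CC^{\sss{(q)}}_{T_q(\beta_n)}\bigr)\sim n^{\beta_n(\tau-2)^{b_n^{\sss{(q)}}}}$, i.e.\ that the cluster's aggregate degree is of the same order as its single maximal vertex. This is Proposition~\ref{lem:no-early-meeting} (proved via the bad-path Lemma~\ref{lem:badpaths}), which simultaneously delivers the disjointness of the two clusters that you tacitly assume.

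The genuine work is in the \emph{upper bound} for the middle segment, and there the paper's method differs from yours. Your layer-by-layer greedy construction---hop from $v_r^\star$ to a single higher-degree vertex, repeat, until reaching $v_b^\star$---breaks down when $\beta_n$ is small: two vertices of the maximal degree $n^{\beta_n}$ have degree product $n^{2\beta_n}\ll n$ whenever $\beta_n<1/2$, so they are \emph{not} whp neighbours, and one cannot chain individual hubs. The paper instead runs a second-moment (Chebyshev) argument on the number $N_{z_n^\star}(v_r^\star,v_b^\star)$ of paths of the target length between the two fixed hubs, with the $i$th vertex confined to a prescribed disjoint set $\Delta_i$ so that path overlaps are tractable; see \eqref{eq:expected-path-1}--\eqref{eq:second-term-1}. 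The variance-over-square reduces to $\kappa_1/(\nu_1^2\,d_{v_q^\star})\sim n^{\beta_n((\tau-2)-(\tau-2)^{b_n^{\sss{(q)}}})}\to 0$, which is precisely the requirement that $v_q^\star$ be a hub. Assumption~\ref{assu:pointmass} enters here and in Proposition~\ref{lem:no-early-meeting} in the boundary case $\beta_n=1/(\tau-1)$, to exclude the measure-zero event that $b_n^{\sss{(q)}}(\beta_n)$ hits an integer boundary.
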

The sequence $Y_q^{\sss{(n)}}$ in $T_q({\beta_n}), \bnq({\beta_n})$ converges in distribution as $n\to \infty$. It is straightforward to show that for a sequence of random variables $X_n$ converging in distribution, the transforms $\lfloor \log \log (n^{\beta})+X_n\rfloor$ and $\{\log \log n+X_n\}$ do not converge, since their distribution shifts along as $\log \log (n^{\beta})$ moves from one integer to the next. 
We can rephrase the statement of Theorem \ref{thm:distances} in terms of convergence in distribution by filtering out the parts that show loglog-periodicity.
\begin{corollary}\label{corr:convergence}
The following distributional convergence holds:

\be\label{eq:distance-converge-p2}\begin{split}
	\mathrm d_G(v_r, v_b)- \frac{2\log\log (n^{\beta_n})}{|\log (\tau-2)|} -\left\lceil    \frac{1/{\beta_n}-(\tau-2)^{b_n^{\sss{(r)}}({\beta_n})}-(\tau-2)^{b_n^{\sss{(b)}}({\beta_n})}}{3-\tau} \right\rceil\\+b_n^{(r)}({\beta_n})+b_n^{(b)}({\beta_n}) 	 \toindis -1+ \frac{-\log  (Y_rY_b)}{|\log (\tau-2)|}.
\end{split}\ee
Alternatively, we obtain weak convergence along (double-exponentially growing) subsequences $(n_k)_{k\in \N}$ satisfying $\log \log (n_k^{\beta_n})=k+c+o(1)$ for every $c\in[0,1)$.
\end{corollary}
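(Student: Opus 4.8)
The plan is to read off Corollary \ref{corr:convergence} from Theorem \ref{thm:distances} by an algebraic rearrangement, and then to pass to the limit via the continuous mapping theorem. The starting observation is the elementary identity $\lfloor x\rfloor+\{x\}=x$. Applying it to $x=\big(\log\log(n^{{\beta_n}})-\log(Y_q^{(n)})\big)/|\log(\tau-2)|$ for $q\in\{r,b\}$, the definitions \eqref{def:ti-bi} give
\[
T_q({\beta_n})+b_n^{(q)}({\beta_n})=\frac{\log\log(n^{{\beta_n}})-\log(\Yrn\text{ or }\Ybn)}{|\log(\tau-2)|}-1 .
\]
Substituting this (for $q=r$ and $q=b$) into \eqref{eq:dist-beta-small-p2}, on the event (of probability $\to1$) on which the equality of Theorem \ref{thm:distances} holds, the ceiling term and the additive constant rearrange into exactly the centering of \eqref{eq:distance-converge-p2}, leaving
\[
\mathrm d_G(v_r,v_b)-\frac{2\log\log(n^{{\beta_n}})}{|\log(\tau-2)|}-\left\lceil\frac{1/{\beta_n}-(\tau-2)^{b_n^{(r)}({\beta_n})}-(\tau-2)^{b_n^{(b)}({\beta_n})}}{3-\tau}\right\rceil+b_n^{(r)}({\beta_n})+b_n^{(b)}({\beta_n})=-1+\frac{-\log(\Yrn\Ybn)}{|\log(\tau-2)|}.
\]
Thus the left-hand side of \eqref{eq:distance-converge-p2} coincides, off an event whose probability tends to $0$, with the explicit random variable on the right, and by Slutsky's theorem it suffices to find the weak limit of $-1-\log(\Yrn\Ybn)/|\log(\tau-2)|$.

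For this I would invoke the convergence $(\Yrn,\Ybn)\toindis(Y_r,Y_b)$ noted right after Definition \ref{def:limit-variables}: $(\Yrn,\Ybn)$ is the value of the a.s.-convergent deterministic sequence $\big((\tau-2)^k\log Z_k^{(r)},(\tau-2)^k\log Z_k^{(b)}\big)$ evaluated at the (almost surely finite, but diverging) index $t(n^{\vr_n'})$, and Assumption \ref{assu:tv} transfers this convergence to the graph exploration. Combining with the continuous mapping theorem applied to the map $(y_1,y_2)\mapsto -1-(\log y_1+\log y_2)/|\log(\tau-2)|$ gives the limit $-1+\frac{-\log(Y_rY_b)}{|\log(\tau-2)|}$, which is \eqref{eq:distance-converge-p2}. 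The one point needing a line of justification is that this map is continuous on a set of full measure under the law of $(Y_r,Y_b)$, i.e.\ that $Y_rY_b\in(0,\infty)$ a.s.; this holds because Assumption \ref{assu:mindeg} forces the coupled Galton--Watson processes underlying \eqref{def:Y} to survive (every generation has size at least $2$), so $Y_r,Y_b\in(0,\infty)$ a.s.\ by the infinite-mean branching asymptotics.

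For the ``alternatively'' clause I would argue that along a subsequence $(n_k)$ with $\log\log(n_k^{\beta_{n_k}})=k+c+o(1)$ the argument of the floors and fractional parts in \eqref{def:ti-bi} has a non-random lattice position in the limit (its fractional part tends to a value depending only on $c$ and the fixed constant $|\log(\tau-2)|$), so that after subtracting explicit deterministic integer sequences the quantities $T_q(\beta_{n_k})$, $b_{n_k}^{(q)}(\beta_{n_k})$ and the ceiling term in \eqref{eq:dist-beta-small-p2} become continuous functionals of the convergent vector $(\Yrn,\Ybn)$ evaluated away from their jump sets; the continuous mapping theorem then yields joint convergence in distribution, hence convergence of $\mathrm d_G(v_r,v_b)$ minus an explicit deterministic centering along $(n_k)$. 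Since here the floor/ceiling operations do not cancel, the argument genuinely requires the limiting variables $Y_r,Y_b$ to place no mass on the (at most countable) set of points at which the relevant argument is forced to be an integer — and this is exactly the role of Assumption \ref{assu:pointmass} in the boundary regime ${\beta_n}\to 1/(\tau-1)$, while away from that regime the additional randomness entering the ceiling through $1/{\beta_n}$ and the $b_n^{(q)}$ makes the same conclusion available. The only real obstacle is thus this routine but careful verification that the discontinuity sets of the floor/ceiling functionals are null under the limit law; the algebraic heart of the corollary is immediate from Theorem \ref{thm:distances}.
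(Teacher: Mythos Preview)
Your proof is correct and follows exactly the approach implicit in the paper: the corollary is stated there as a direct rephrasing of Theorem \ref{thm:distances}, and your algebraic rearrangement via $\lfloor x\rfloor+\{x\}=x$ applied to \eqref{def:ti-bi}, followed by the convergence $(\Yrn,\Ybn)\toindis(Y_r,Y_b)$ and the continuous mapping theorem, is precisely how one fills in the details. The paper gives no separate proof, so there is nothing to compare beyond noting that your write-up makes explicit what the paper leaves to the reader.
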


\begin{remark}\normalfont The criterion that ${\beta_n}(\log n)^\gamma\to \infty$  for some $\gamma\in (0,1)$ is just slightly stronger than requiring that the empirical second moment of the degrees in the graph  tends to infinity. Indeed, we give in \eqref{eq:second-mom} in Claim \ref{cl:tech} the upper bound $n^{(3-\tau){\beta_n}}$ on the empirical second moment of the degrees. A similar lower bound can be proved as well. This expression tends to infinity whenever ${\beta_n}\log n\to \infty$.
\end{remark}

\begin{remark}\label{rem:dominant}\normalfont
The message of Theorem \ref{thm:distances}  is that typical distances are centered around approximately $2\log\log (n^{{\beta_n}})/|\log(\tau-2)|+1/({\beta_n}(3-\tau))$, when the truncation of the degrees happens at a value $n^{{\beta_n}}$, with tight fluctuations around this value.
 As pointed out before, the threshold for the dominance of the two terms is at ${\beta_n}=\Theta(1/\log\log n)$. Indeed, as soon as ${\beta_n}=o(1/\log\log n)$, the term containing $1/{\beta_n}$ in \eqref{eq:distance-converge-p2} becomes dominant, and the second order term\footnote{Note that in \eqref{eq:distance-converge-p2}, the terms containing $\log\log n$ and $-\log {\beta_n}$ both compete to be the second order term, but under the criterion on ${\beta_n}$, $-\log {\beta_n}=O(\log\log n)$.} is of order $\log \log n$. On the other hand, when ${\beta_n} \log \log n\to \infty$, the dominant term is $\log\log n$.
\end{remark}

\begin{remark}[Dropping the condition on the minimal degree]\label{rem:no-mindeg}\normalfont
With slightly more work it is possible to drop Assumption \ref{assu:mindeg} from the assumptions in Theorems \ref{thm:distances}. Under Assumption \ref{assu:degree-dist} but without Assumption \ref{assu:mindeg}, the graph has a unique  \emph{giant component} of linear size, $\zeta n(1-o_{\Pv}(1))$ for some $\zeta>0$, see Janson and Luczak \cite{JanLuc09}.
In this case, the statement of Theorem \ref{thm:distances}  remain valid \emph{conditioned} on the event that both $v_r, v_b$ are in the giant component of the graph.
This conditioning can be done similarly as described in \cite{HHZ07}. To keep our paper short, we omit to provide the proof here, since this is not the main focus of this paper.
\end{remark}
\subsubsection{Structure and number of shortest paths}
The next two theorems are by-products of the proof of Theorem \ref{thm:distances}. They reveal the structure of shortest paths and thus shed light on the topology of the graph in more detail. Let us denote a path connecting vertices $u,v$ by $\CP_{u,v}$, and let us denote any path that realizes the graph distance $\mathrm d_G(u,v)$ by $\CP^\star_{u,v}$. We write $w\in \CP_{u,v}$ if $w$ is a vertex $\neq u,v$ that is on the path $\CP_{u,v}$.
We write $\mathrm d_G(u,v|\Lambda)$ for the length of the shortest  path between two vertices $u,v$ restricted to contain vertices in a set $\Lambda$. Finally, let us write
\be\label{eq:lambda-le}\Lambda_{\le z}:=\{v\in [n]: d_v\le n^z \},\ee
and for a triplet $(z, x_1,x_2)$ of numbers let us define the `upper' and `lower' fractional-part of the following expression:
 \begin{align}\label{def:c}f^u(z, x_1, x_2)&:=\left\lceil    \frac{1/z-x_1-x_2}{3-\tau} \right\rceil-  \frac{1/z-x_1-x_2}{3-\tau},\\
 \label{eq:polinomial} f^\ell(z, x_1, x_2)&:= \frac{1/z-x_1-x_2}{3-\tau} - \left\lfloor \frac{1/z-x_1-x_2}{3-\tau} \right\rfloor.
 \end{align}
 Note that either $f^u=f^\ell=0$ or $f^u=1-f^\ell\in(0,1)$.
\begin{theorem}[Structure and number of shortest paths between hubs]\label{thm:structure-1}
Under the same conditions as Theorem \ref{thm:distances}, let $\wit{\beta_n}\le{\beta_n}$ be such that $\wit{\beta_n} (\log n)^\gamma\to \infty$. Let $x_1,x_2 > \tau-2$ and $v_1,v_2$ be two vertices with degrees $d_{v_j}\sim n^{x_j\wit{\beta_n} }$ for $j=1,2$. Then, the distance between $v_1,v_2$ \emph{restricted to} paths $\CP_{v_1,v_2}$ that contain only vertices with degree at most $n^{\wit{\beta_n}}$ is whp
 \be\label{eq:alpha-truncation} \mathrm{d}_G(v_1,v_2\mid \Lambda_{\le\wit{\beta_n}})=\left\lceil    \frac{1/(\wit{\beta_n})-x_1-x_2}{3-\tau} \right\rceil +1,\ee
while the \emph{number of shortest paths}\footnote{Here, overlaps between paths are allowed. We consider two paths different if they have at least one different edge.} between $v_1, v_2$ within $\Lambda_{\le \wit{\beta_n}}$ satisfies whp
 \be\label{eq:nof-shortest} \#\{\CP_{v_1, v_2}^\star \mid \Lambda_{\le \wit{\beta_n}}\} \sim n^{\wit{\beta_n} f^u(\wit{\beta_n}, x_1, x_2)},\ee
 where $f^u(\wit{\beta_n}, x_1, x_2)$ is defined in \eqref{def:c}.
Further,
 whp all vertices on \emph{any} shortest path in $\Lambda_{\le \wit{\beta_n}}$ connecting $v_1,v_2$  have degree at least
 $n^{\wit{\beta_n} f^\ell(\wit{\beta_n}, x_1, x_2)}$. I.e., for all $\ve>0$,
\be\label{eq:no-low-degree}
\Pv\left( \exists \CP^\star_{v_1,v_2}\in \Lambda_{\le \wit{\beta_n}}, w\in \CP^\star_{v_1,v_2}: d_w \le n^{\wit{\beta_n} f^\ell(\wit{\beta_n}, x_1, x_2)(1-\ve)}\right) \to 0.
\ee
\end{theorem}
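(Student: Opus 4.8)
\emph{Plan.}
I would derive all three statements from the first- and second-moment method applied to the number of paths of a prescribed length from $v_1$ to $v_2$ that stay inside $\Lambda_{\le\widetilde{\beta_n}}$, together with refinements that also constrain the degrees of the interior vertices; here and below $f^u,f^\ell$ abbreviate $f^u(\widetilde{\beta_n},x_1,x_2),f^\ell(\widetilde{\beta_n},x_1,x_2)$, all asymptotic equalities are understood up to the $\sim$-factors of Definition~\ref{def:sim}, and I work in the non-degenerate regime where $q:=\big(1/\widetilde{\beta_n}-x_1-x_2\big)/(3-\tau)$ stays bounded away from $\Z$ (equivalently $f^u,f^\ell$ bounded away from $0$); the borderline cases follow from the same computations after a harmless change of $\varepsilon$. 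Set $\ell^\star:=\lceil q\rceil+1$ --- the value claimed in \eqref{eq:alpha-truncation} --- and, for $\ell\in\N$, let $N_\ell$ be the number of self-avoiding paths $v_1=u_0,u_1,\dots,u_{\ell-1},u_\ell=v_2$ in $\CMD$ with $u_1,\dots,u_{\ell-1}\in\Lambda_{\le\widetilde{\beta_n}}$. The three steps are: \emph{(i)} show $\sum_{\ell<\ell^\star}\E[N_\ell]=o(1)$, so that by Markov no path of length below $\ell^\star$ exists whp, giving $\mathrm d_G(v_1,v_2\mid\Lambda_{\le\widetilde{\beta_n}})\ge\ell^\star$; \emph{(ii)} show that $N_{\ell^\star}$ concentrates around $\E[N_{\ell^\star}]$, which I will compute and match against the right-hand side of \eqref{eq:nof-shortest} --- this gives the complementary bound $\mathrm d_G(v_1,v_2\mid\Lambda_{\le\widetilde{\beta_n}})\le\ell^\star$, hence \eqref{eq:alpha-truncation}, and simultaneously the path count \eqref{eq:nof-shortest}; \emph{(iii)} show that the expected number of length-$\ell^\star$ paths using some vertex of degree at most $n^{\widetilde{\beta_n}f^\ell(1-\varepsilon)}$ tends to $0$, which yields \eqref{eq:no-low-degree}.

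\emph{The first-moment computation.}
Since $\ell^\star=O(1/\widetilde{\beta_n})=o(\log n)=o(\sqrt{\ell_n})$, the probability in $\CMD$ that a fixed sequence of $\ell$ consecutive pairs of half-edges is matched is $\asymp d_{v_1}d_{v_2}\prod_{i=1}^{\ell-1}d_{u_i}^{\,2}\,\ell_n^{-\ell}$; summing the interior coordinates over $\Lambda_{\le\widetilde{\beta_n}}$ (repeated vertices contribute lower order) gives
\[\E[N_\ell]\ \asymp\ \frac{d_{v_1}d_{v_2}}{\ell_n^{\ell}}\Big(\sum_{v:\,d_v\le n^{\widetilde{\beta_n}}}d_v^{\,2}\Big)^{\ell-1}.\]
By the Karamata-type estimate underlying Claim~\ref{cl:tech}/\eqref{eq:second-mom}, but with the truncation taken at $n^{\widetilde{\beta_n}}$ rather than $n^{\beta_n}$ (legitimate because $\widetilde{\beta_n}(\log n)^\gamma\to\infty$), the truncated second-moment sum is $\asymp n\cdot n^{\widetilde{\beta_n}(3-\tau)}$; using $d_{v_j}\sim n^{x_j\widetilde{\beta_n}}$ and $\ell_n\asymp n$ this gives
\[\E[N_\ell]\ \asymp\ n^{\widetilde{\beta_n}\left(x_1+x_2+(3-\tau)(\ell-1)\right)-1}.\]
The exponent is strictly negative exactly when $\ell-1<q$, i.e.\ for every $\ell\le\ell^\star-1$; for $\ell=\ell^\star-1$ it equals $-\widetilde{\beta_n}(3-\tau)f^\ell$, so $\E[N_{\ell^\star-1}]\to0$ in the non-degenerate regime, and decreasing $\ell$ only makes the exponent more negative, so the terms are summable and $\sum_{\ell<\ell^\star}\E[N_\ell]=o(1)$: this is step \emph{(i)}. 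Substituting $\ell=\ell^\star$ identifies $\E[N_{\ell^\star}]$ with the right-hand side of \eqref{eq:nof-shortest}. For step \emph{(iii)} I would rerun the computation with the $i$-th interior coordinate restricted to $\{v:d_v\le n^{\widetilde{\beta_n}f^\ell(1-\varepsilon)}\}$, replacing one factor $n^{1+\widetilde{\beta_n}(3-\tau)}$ by $n^{1+\widetilde{\beta_n}f^\ell(1-\varepsilon)(3-\tau)}$; since $q=(\ell^\star-2)+f^\ell$ the exponent becomes $-\widetilde{\beta_n}(3-\tau)\varepsilon f^\ell<0$, and a union bound over the $\ell^\star-1=O(1/\widetilde{\beta_n})$ positions and over dyadic degree bands yields \eqref{eq:no-low-degree}. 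The same dyadic bookkeeping shows that the bulk of $\E[N_{\ell^\star}]$ is carried by paths whose interior degrees all lie in $\big[n^{\widetilde{\beta_n}f^\ell},n^{\widetilde{\beta_n}}\big]$.

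\emph{The second moment --- and the hard part.}
For the lower bound on $N_{\ell^\star}$ I would bound $\E[N_{\ell^\star}^2]$ and show it is $(1+o(1))\E[N_{\ell^\star}]^2$, which suffices because $\E[N_{\ell^\star}]\to\infty$ in the non-degenerate regime. The leading term of $\E[N_{\ell^\star}(N_{\ell^\star}-1)]$ comes from pairs of length-$\ell^\star$ paths disjoint apart from $v_1,v_2$, and the main obstacle will be controlling pairs of paths that share interior vertices or edges: the heavy-tailed degrees make this delicate, since a single shared vertex of degree close to the cap $n^{\widetilde{\beta_n}}$ can a priori inflate the second moment, so one has to classify overlap patterns and bound each resulting sum against $\E[N_{\ell^\star}]^2$. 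I expect the cleanest implementation to mirror the proof of Theorem~\ref{thm:distances}: explore the $\widetilde{\beta_n}$-truncated neighbourhoods of $v_1$ and of $v_2$ in breadth-first order, couple them to two independent Galton--Watson trees via Lemma~\ref{lem:couple}/Corollary~\ref{corr:couple} while the explored clusters have at most $n^{\vr}$ vertices, pass to direct edge-counting in $\CMD$ beyond that, and use that the two explorations begin to interact only once they are large enough to collide --- and, by the degree-profile analysis of the previous paragraph, they collide at vertices of degree $\approx n^{\widetilde{\beta_n}}$; a truncated second-moment estimate on the number of such collision vertices then delivers the concentration, hence \eqref{eq:alpha-truncation} and \eqref{eq:nof-shortest}. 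What remains is routine but lengthy: making every estimate uniform over $v_1,v_2$ with $d_{v_j}\sim n^{x_j\widetilde{\beta_n}}$, $x_j>\tau-2$, and disposing separately of the extremes $x_j\uparrow1$ (where $v_j$ already has near-linear degree and its neighbourhood saturates immediately) and $f^\ell\downarrow0$ (where several exponents become borderline), in both of which the estimates degenerate gracefully.
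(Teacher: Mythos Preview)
Your first-moment work (steps \emph{(i)} and \emph{(iii)}) is correct and matches the paper: the expected-path-count formula $\Ev[N_\ell]\asymp n^{\wit\beta_n(x_1+x_2+(\ell-1)(3-\tau))-1}$ is exactly \eqref{eq:free-from-end}--\eqref{eq:kell-connect} specialised to this setting, and the low-degree-vertex argument in step \emph{(iii)} is the paper's \eqref{eq:z-modified}. You also correctly identify that the second moment is where the real work lies.

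The gap is in your proposed implementation of the second moment. You suggest ``explore the $\wit\beta_n$-truncated neighbourhoods of $v_1$ and of $v_2$ in breadth-first order, couple them to two independent Galton--Watson trees via Lemma~\ref{lem:couple}/Corollary~\ref{corr:couple}''. This cannot work as written: the coupling in Lemma~\ref{lem:couple} is valid only while the total explored size $s_n$ satisfies the smallness condition \eqref{eq:snmax}, and the error term $s_n^2 n^{\beta_n(1+\ve)-1}$ blows up immediately when you start from a hub. Since $d_{v_1}\sim n^{x_1\wit\beta_n}$ with $x_1>\tau-2$, already after \emph{one} BFS step you have $s_n\sim n^{x_1\wit\beta_n}$, which for $x_1$ close to $1$ and $\wit\beta_n$ close to $1/(\tau-1)$ gives $s_n$ of order nearly $n^{1/(\tau-1)}$; the coupling error is then polynomially large. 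The branching-process machinery in Sections~\ref{s:couple}--\ref{s:badpaths} is built for the \emph{first} phase of Theorem~\ref{thm:distances} (going from a uniform low-degree vertex up to the hubs), not for the hub-to-hub phase, and the paper is explicit about this: its proof of the present theorem notes that ``one does not need to estimate the number of half-edges attached to $v_1,v_2$, since these are given, so there is no need to condition on the sigma algebra $\CF_{\vr_n'}$ or on the good event ($\text{NoBad}$) either''.

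What the paper actually does for the variance is the direct overlap-classification you mention first, but with a device you are missing: it partitions the vertex set into $M_{\beta_n}$ roughly equal pieces $\Delta_0,\dots,\Delta_{M_{\beta_n}}$ (equations \eqref{eq:nu-new}--\eqref{eq:kappa-new}) and counts only those paths whose $j$th vertex lies in $\Delta_j$ (or $\Delta_{M_{\beta_n}+1-j}$). This forces any two overlapping paths to share a \emph{contiguous} segment, so the overlap pattern is a single merge/split rather than an arbitrary interleaving; the variance then reduces to the explicit sum in \eqref{eq:var-est}, borrowed from \cite[Lemma 7.1]{BarHofKom14}. The dominant error terms are $\kappa_1/(\nu_1^2 d_{v_j})\sim n^{\wit\beta_n((\tau-2)-x_j)}$, which vanish precisely under your hypothesis $x_j>\tau-2$. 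Chebyshev then gives both the existence of a length-$\ell^\star$ path and the concentration \eqref{eq:concentration-nof-paths} needed for \eqref{eq:nof-shortest}. Without the $\Delta_j$-restriction, bounding the contribution of non-contiguous overlaps through high-degree shared vertices is the delicate point you anticipated, and you would have to reproduce something equivalent to make your direct approach go through.
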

Interpreting Theorem \ref{thm:structure-1},
first note that setting $\wit{\beta_n}<{\beta_n}$ in \eqref{eq:alpha-truncation} reveals the distance between the two vertices when the path must avoid vertices with degree at least $n^{\wit{\beta_n}}$.
While
\eqref{eq:alpha-truncation} for $\wit{\beta_n}\equiv {\beta_n}$ shows that the generating function approximation known from physics - the formula described in \eqref{eq:physics} - is valid when instead of typical distances, we consider distances between very high-degree vertices.
The second statement, \eqref{eq:nof-shortest} shows that the number of shortest paths between hubs concentrate on a logarithmic scale, since the statement 
\be\label{eq:convergence-nof-paths} \frac{\log \left( \#\{\CP_{v_1, v_2} \in \Lambda_{\le \wit{\beta_n}}\}\right)}{ f^u(\wit{\beta_n}, x_1, x_2)\wit{\beta_n}\log n } \toinp 1.\ee
is a direct consequence of \eqref{eq:nof-shortest}.
This implies that there are \emph{many} shortest paths,
$n^{\wit{\beta_n} f^u(\wit{\beta_n}, x_1, x_2)}$ many. Note however that as soon as any of $\wit{\beta_n}, x_1, x_2$ depends on $n$, the upper fractional part $f^u$ starts to oscillate in the interval $[0,1)$.
Similarly, the statement in \eqref{eq:no-low-degree} shows that \emph{all these shortest paths} use relatively high degree vertices - just a factor $f^\ell$ multiplies  the exponent of the maximally allowed degree $n^{\wit{\beta_n}}$.  Keep in mind that $f^u$ and $f^\ell$ are does not necessarily are bounded away from $0$.

Note again that as soon as any of ${\beta_n}, x_1, x_2$ depends on $n$, $f^\ell({\beta_n}, x_1, x_2)$ oscillates together with this fractional part. 

Comparing \eqref{eq:nof-shortest} and \eqref{eq:no-low-degree}, one notes that $f^u+f^\ell=1$ unless both of them are $0$. One can avoid integer values by slightly changing ${\beta_n}$ or $x_1, x_2$ by pushing some $n$-dependent terms into the $d_{v_j}\sim n^{x_j {\beta_n}}$ relation. Further, it is not hard to extend the proof of Theorem \ref{thm:structure-1} to show that there is at least one shortest path that uses a vertex with degree $\sim n^{\wit{\beta_n} f^\ell(\wit \beta_n, x_1, x_2 )(1+\ve)}$ for arbitrary small $\ve>0$. 
 Thus, we arrive to the following observation.

\begin{observation}\label{obs:factor} Let $v_1, v_2$ be two hubs with $d_{v_j}\sim  n^{x_j{\beta_n}}$ for $x_j>\tau-2, j\in\{1,2\}$. Then
the number of shortest paths between $v_1, v_2$ times the lowest degree that these paths use
gives approximately\footnote{up to error terms of order at most $\exp\{\pm(\log n^{\beta_n})^\theta\}$, for some $\theta<1$.} the maximal degree in the graph, i.e., $\sim n^{\beta_n}$.\end{observation}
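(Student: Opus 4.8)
The plan is to obtain Observation~\ref{obs:factor} as a short corollary of Theorem~\ref{thm:structure-1}, applied with the choice $\wit{\beta_n}\equiv{\beta_n}$ (which is admissible since ${\beta_n}(\log n)^\gamma\to\infty$ by hypothesis), together with the refinement of that theorem announced in the paragraph preceding the observation: namely, that there is a shortest path between $v_1$ and $v_2$ whose minimal-degree vertex has degree of order $n^{{\beta_n}f^\ell({\beta_n},x_1,x_2)}$, and not merely at least $n^{{\beta_n}f^\ell({\beta_n},x_1,x_2)(1-\ve)}$.

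First I would isolate the two relevant quantities. Write $N_n:=\#\{\CP^\star_{v_1,v_2}\mid\Lambda_{\le{\beta_n}}\}$ for the number of shortest paths between $v_1$ and $v_2$ in $\Lambda_{\le{\beta_n}}$, and $D_n^{\min}$ for the smallest degree occurring on any such path. By \eqref{eq:nof-shortest}, $N_n\sim n^{{\beta_n}f^u({\beta_n},x_1,x_2)}$. For $D_n^{\min}$, the lower bound \eqref{eq:no-low-degree} yields $\Pv\big(D_n^{\min}\le n^{{\beta_n}f^\ell({\beta_n},x_1,x_2)(1-\ve)}\big)\to0$ for every $\ve>0$, while the matching upper bound — existence of a shortest path using a vertex of degree $\sim n^{{\beta_n}f^\ell({\beta_n},x_1,x_2)}$ — is the refinement of Theorem~\ref{thm:structure-1} quoted above; combining the two gives $D_n^{\min}\sim n^{{\beta_n}f^\ell({\beta_n},x_1,x_2)}$.

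Next I would invoke the elementary dichotomy noted below~\eqref{eq:polinomial}: for any triple $(z,x_1,x_2)$ one has either $f^u(z,x_1,x_2)=f^\ell(z,x_1,x_2)=0$, or $f^u(z,x_1,x_2)+f^\ell(z,x_1,x_2)=1$ with both values lying in $(0,1)$. The degenerate first case can always be avoided by perturbing ${\beta_n}$ or the $x_j$ by $n$-dependent amounts absorbed into the relations $d_{v_j}\sim n^{x_j{\beta_n}}$ (as remarked before the observation), so we may assume $f^u({\beta_n},x_1,x_2)+f^\ell({\beta_n},x_1,x_2)=1$. Setting $a_n=n^{{\beta_n}f^u}$ and $b_n=n^{{\beta_n}f^\ell}$, so that $a_nb_n=n^{{\beta_n}}$, it then only remains to check that the relation $\sim$ of Definition~\ref{def:sim} is stable under this product; this is immediate because $\log a_n$ and $\log b_n$ are both at most ${\beta_n}\log n\to\infty$, so the two error exponents $(\log a_n)^{\theta_1}$ and $(\log b_n)^{\theta_2}$ arising from $N_n$ and $D_n^{\min}$ (with some $\theta_1,\theta_2<1$) add up to at most $(\log n^{{\beta_n}})^{\theta}$ for any fixed $\theta\in(\max(\theta_1,\theta_2),1)$ and all large $n$. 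Hence
\[
 N_n\cdot D_n^{\min}\ \sim\ n^{{\beta_n}f^u({\beta_n},x_1,x_2)}\cdot n^{{\beta_n}f^\ell({\beta_n},x_1,x_2)}\ =\ n^{{\beta_n}},
\]
which is the asserted statement.

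The \emph{only} non-routine ingredient is the refinement of Theorem~\ref{thm:structure-1} used in identifying $D_n^{\min}$ — that some shortest path in $\Lambda_{\le{\beta_n}}$ actually attains a minimal degree of order $n^{{\beta_n}f^\ell({\beta_n},x_1,x_2)}$ — and I expect this to be the main obstacle if one wants a self-contained proof. Establishing it amounts to re-running the first- and second-moment path-counting estimates behind \eqref{eq:nof-shortest}, but now restricted to shortest paths whose ``bottom layer'' sits at degree exponent exactly ${\beta_n}f^\ell({\beta_n},x_1,x_2)$, and verifying that such restricted paths still exist whp — equivalently, that the degree-decreasing greedy exploration started from $v_1$ and from $v_2$ can be completed through a connecting vertex of that degree without the truncation at $n^{{\beta_n}}$ forcing a strictly longer route. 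Everything else is bookkeeping with \eqref{def:c}--\eqref{eq:polinomial} and the $\sim$-calculus of Definition~\ref{def:sim}.
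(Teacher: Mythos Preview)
Your proposal is correct and follows essentially the same approach as the paper's sketch proof: both multiply the two outputs of Theorem~\ref{thm:structure-1} and use $f^u+f^\ell=1$, and both identify the only non-routine ingredient as the matching \emph{upper} bound on $D_n^{\min}$ (existence of a shortest path using a vertex of degree $\sim n^{{\beta_n}f^\ell(1+\ve)}$). Your suggestion to obtain this by re-running the first/second-moment path-count with one vertex forced into the low-degree layer $\Lambda_{\le{\beta_n}f^\ell(1+\ve)}$ is exactly what the paper does --- it applies Chebyshev's inequality to the restricted path count just as in \eqref{eq:cheb-222}, verifying that the variance-over-expectation-squared still vanishes.
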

We provide a sketch proof of this observation in Section \ref{s:extension}.
Our next theorem analyses the number and structure of shortest paths between two uniformly chosen vertices $v_r, v_b$:
\begin{theorem}[Structure  and number of shortest paths]\label{thm:structure-2}
Under the same conditions as in Theorem \ref{thm:distances}, there is a shortest path between $v_r, v_b$ that has the following structure, whp:

\begin{enumerate}
\item[(1)]\emph{(Degree-increasing phase)} For both $q\in\{r,b\}$, starting from $v_q$, a path segment of length $T_q({\beta_n})=\log \log (n^{{\beta_n}})/|\log (\tau-2)|+\Theta_{\Pv}(1)$ as in \eqref{def:ti-bi} ends with a vertex $v_q^\star$ with degree
 \be\label{eq:degree-at-segment} d_{v_q^\star}\sim n^{{\beta_n} (\tau-2)^{\bnq({\beta_n})}},\ee where $\bnq({\beta_n})$ is from \eqref{def:ti-bi}. The vertex $v_q^\star$ can be chosen to be the maximal degree vertex among all vertices that are reachable from $v_q$ on a path of length $T_q({\beta_n})$.
  For any $k<i_{\star\sss{(q)}}({\beta_n})$, the degree of the $(T_q({\beta_n})-k)$th vertex on the path between $v_q, v_q^\star$  is  $\sim n^{{\beta_n} (\tau-2)^{\bnq({\beta_n})+k}}$, where $i_{\star\sss{(q)}}({\beta_n})$ are tight random variables given below in \eqref{eq:value_i*-al} and \eqref{eq:istar-b}.

\item[(2)]\label{phase-2}\emph{(Connection among high-degree vertices phase)} A path of length
\be\label{eq:connecting-segment} \left\lceil    \frac{1/{\beta_n}-(\tau-2)^{b_n^{\sss{(r)}}({\beta_n})}-(\tau-2)^{b_n^{\sss{(b)}}({\beta_n})}}{3-\tau} \right\rceil +1\ee connects $v_r^\star, v_b^\star$ using only vertices with degree at least $n^{{\beta_n} f_n^\ell}$, where,  in agreement with Theorem \ref{thm:structure-1}, $f^\ell_n:=f^\ell({\beta_n}, (\tau-2)^{\bnr({\beta_n})}, (\tau-2)^{\bnb({\beta_n})})$.
Further, Phase (2) is valid for all shortest paths, whp. That is, whp, for any shortest path $\CP_{v_r, v_b}^\star$, the segment between the $T_r({\beta_n})$th and the $|\CP_{v_r, v_b}^\star|-T_b({\beta_n})$th vertex has length as in \eqref{eq:connecting-segment}, and it only contains vertices with degree at least $n^{{\beta_n} f^\ell_n}$.
\end{enumerate}
Finally, as a consequence of Theorem \ref{thm:structure-1}, the number of shortest paths between $v_r, v_b$ satisfies $\Pv_{\sss{Y}}-whp$
\[ \#\{ \CP_{v_r, v_b}^\star \}\sim n^{{\beta_n} f^u_n},\]
where $f^u_n:=f^u({\beta_n}, (\tau-2)^{\bnr({\beta_n})}, (\tau-2)^{\bnb({\beta_n})} )\in[0,1)$ fluctuates with $n$.
\end{theorem}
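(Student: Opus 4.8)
The plan is to construct a $v_r$--$v_b$ path from three pieces and then to \emph{certify} that it is a shortest path by the bare observation that its length equals the formula proven in Theorem~\ref{thm:distances}; the remaining assertions (the structure of \emph{all} shortest paths and their number) will be extracted from Theorem~\ref{thm:structure-1} together with the lower-bound estimates already underlying the proof of Theorem~\ref{thm:distances}.

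\emph{Phase~(1): the two degree-increasing end segments.} Fix $q\in\{r,b\}$ and explore the neighbourhood of $v_q$ breadth-first. Up to the first generation $t(n^{\vr_n'})$ at which the explored cluster exceeds $n^{\vr_n'}$ I would invoke the branching-process coupling of Lemma~\ref{lem:couple}/Corollary~\ref{corr:couple}; this records $Y_q^{\sss{(n)}}$ and, via the double-exponential growth estimate already established for Theorem~\ref{thm:distances}, gives $\log Z_k^{\sss{(q)}}=Y_q^{\sss{(n)}}(\tau-2)^{-k}(1+\op(1))$ in that range. Beyond $t(n^{\vr_n'})$ the coupling is unavailable, so I would continue with a direct first/second-moment computation on path counts inside $\CMD$: from a vertex of degree $\sim n^{a}$ with $a<{\beta_n}$, the expected number of half-edges leading to vertices of degree at least $n^{b}$ is of order $n^{a-b(\tau-2)}$ (up to the slowly varying factors the $\sim$-notation absorbs), so whp one reaches a vertex of degree $\sim n^{a/(\tau-2)}$ and nothing larger. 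Iterating, the largest degree reachable from $v_q$ multiplies its exponent by $1/(\tau-2)$ at each step until it saturates at the truncation level $n^{{\beta_n}}$; solving for the saturation time returns exactly the length $T_q({\beta_n})$ of \eqref{def:ti-bi} and the endpoint degree $\sim n^{{\beta_n}(\tau-2)^{\bnq({\beta_n})}}$ of \eqref{eq:degree-at-segment}. Taking $v_q^\star$ to be a degree-maximiser among the vertices at distance $T_q({\beta_n})$ from $v_q$ and backtracking the greedy path realising these successive power jumps yields a segment on which the $(T_q({\beta_n})-k)$th vertex has degree $\sim n^{{\beta_n}(\tau-2)^{\bnq({\beta_n})+k}}$; this identification is sharp only while the predicted exponent stays above the level $\vr_n'$ that delimits the branching-process regime, i.e.\ for $k<i_{\star\sss{(q)}}({\beta_n})$, with $i_{\star\sss{(q)}}$ the tight random variable of \eqref{eq:value_i*-al}--\eqref{eq:istar-b}.

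\emph{Phase~(2): the connecting segment and assembly of a shortest path.} I would then condition on $\CF_{\vr_n'}$ together with the outcome of the Phase-(1) exploration, freezing $v_r^\star,v_b^\star$ and their degrees; whp the two explorations have not yet met (otherwise $\mathrm d_G(v_r,v_b)$ would be too small, contradicting Theorem~\ref{thm:distances}), and each $v_q^\star$ still carries $\sim n^{{\beta_n}(\tau-2)^{\bnq({\beta_n})}}$ unpaired half-edges. Since the half-edges used so far form a vanishing fraction of $\ell_n$, the unpaired part of $\CMD$ is, up to a negligible perturbation of the degree sequence, a fresh configuration model in which $v_r^\star,v_b^\star$ play the role of two hubs with degree exponents $x_q:=(\tau-2)^{\bnq({\beta_n})}>\tau-2$; moreover every degree is $\le n^{{\beta_n}(1+\ve)}$, so $\Lambda_{\le{\beta_n}}$ is, within the $\sim$-tolerance, the full vertex set and a $\Lambda_{\le{\beta_n}}$-restricted shortest path is an honest shortest path. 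Theorem~\ref{thm:structure-1} applied with $\wit{\beta_n}={\beta_n}$ then supplies in one stroke the connecting path of length \eqref{eq:connecting-segment}, the fact that every shortest $v_r^\star$--$v_b^\star$ path stays inside $\{v:d_v\ge n^{{\beta_n}f_n^\ell}\}$, and that the number of such shortest paths is $\sim n^{{\beta_n}f_n^u}$. Concatenating the two Phase-(1) segments with this middle segment produces a $v_r$--$v_b$ path whose length is $T_r({\beta_n})+T_b({\beta_n})+\lceil(1/{\beta_n}-(\tau-2)^{\bnr({\beta_n})}-(\tau-2)^{\bnb({\beta_n})})/(3-\tau)\rceil+1$, which by Theorem~\ref{thm:distances} equals $\mathrm d_G(v_r,v_b)$; the constructed path is therefore shortest, and parts (1)--(2) hold for \emph{some} shortest path.

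\emph{All shortest paths, the path count, and the main obstacle.} To upgrade to \emph{every} shortest path I would re-run the lower-bound half of the proof of Theorem~\ref{thm:distances}: the same moment estimates show that whp no path can bridge the two local neighbourhoods while leaving $v_q$'s side at a vertex of degree below $n^{{\beta_n}(\tau-2)^{\bnq({\beta_n})}}$ after only $T_q({\beta_n})$ steps, nor cross the middle through a vertex of degree below $n^{{\beta_n}f_n^\ell}$; hence every shortest path splits into a $T_r$-segment, a middle segment of length \eqref{eq:connecting-segment} to which the ``all shortest paths'' statement of Theorem~\ref{thm:structure-1} applies verbatim, and a $T_b$-segment. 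Finally a shortest $v_r$--$v_b$ path is specified by its three segments, and whp each end segment admits at most $\exp(O(\log\log n))=n^{o({\beta_n})}$ completions ($\Theta(\log\log n)$ steps, each offering an $\Op(1)$ number of degree-increasing continuations), which on the scale of the $\sim$-relation is negligible against $n^{{\beta_n}f_n^u}$; so $\Pv_{\sss Y}$-whp $\#\{\CP^\star_{v_r,v_b}\}\sim n^{{\beta_n}f_n^u}$, with $f_n^u$ oscillating in $[0,1)$ as $n$ varies. The genuinely hard part is not constructing one well-structured shortest path but these two ``for all shortest paths'' claims: they require the full strength of the lower-bound machinery behind Theorems~\ref{thm:distances} and~\ref{thm:structure-1} to exclude every competing path, and --- for the path count --- a careful check that the end-segment multiplicities really are absorbed by the tolerance of the $\sim$-notation, which is most delicate precisely when $f_n^u$ is close to $0$.
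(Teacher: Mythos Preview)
Your proposal is correct and follows essentially the same route as the paper: Phase~(1) is exactly Proposition~\ref{prop:hubs} (BP coupling plus the shell construction, with Lemma~\ref{lem:badpaths} giving the matching upper bound on degrees along the path), and Phase~(2) together with the ``all shortest paths'' and counting statements are obtained by invoking Theorem~\ref{thm:structure-1} with $\wit{\beta_n}={\beta_n}$, precisely as the paper does. One small point: your justification that the two Phase-(1) explorations are disjoint by appeal to Theorem~\ref{thm:distances} is circular, since that theorem's lower bound already rests on this disjointness---cite Proposition~\ref{lem:no-early-meeting} directly instead. Conversely, your explicit accounting of end-segment multiplicities in the path count (and the flag that this is delicate near $f_n^u=0$) is more careful than the paper, which simply declares the count a consequence of Theorem~\ref{thm:structure-1}.
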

Theorem \ref{thm:structure-2} sheds light on the true structure of the shortest path between two uniformly chosen vertices: both ends of the path start with a segment where after a short initial randomness, degrees are essentially increasing in a deterministic fashion: each degree is asymptotically a power $1/(\tau-2)$ of the previous degree on the path. This phase ends with a vertex $v_q^\star$ that has degree in the interval $[n^{(\tau-2){\beta_n}}, n^{{\beta_n}}]$, for $q\in\{r,b\}$.
The precise (random) prefactors of ${\beta_n}$ for $q\in\{r,b\}$ in the exponent of $n$ of the degree $d_{v_q^\star}$ determines the precise length of the next phase, that establishes a connecting path between $v_r^\star, v_b^\star$. For this path segment, Theorem \ref{thm:structure-1} can be applied, which means that   the length of this path is at most a constant $\lceil 2/(3-\tau)\rceil$ away from $1/({\beta_n}(3-\tau))$, and all vertices on this path have relatively high degree.

We can turn Observation \ref{obs:factor} to hold for two uniformly chosen vertices as well: the non-integer condition holds whp under Assumption \ref{assu:pointmass} with $(x_1, x_2)=((\tau-2)^{\bnq({\beta_n})})_{q\in\{r,b\}}$, so we arrive to the following observation:
\begin{observation}\label{obs:factor-2} Let $v_r, v_b$ be two uniformly chosen vertices. Then whp under Assumption \ref{assu:pointmass},
the number of shortest paths between $v_r, v_b$ times the lowest degree that these paths use  in Phase (2) in Theorem \ref{thm:structure-2}
is approximately the maximal degree in the graph, i.e., it is $\sim n^{\beta_n}$.
\end{observation}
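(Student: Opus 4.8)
\emph{Sketch of proof.} The plan is to view Observation~\ref{obs:factor-2} as Observation~\ref{obs:factor} applied to the \emph{random} hub-exponents $x_q:=(\tau-2)^{b_n^{\sss{(q)}}({\beta_n})}\in(\tau-2,1]$, $q\in\{r,b\}$, which are measurable with respect to $\CF_{\vr_n'}$. First I would collect what Theorem~\ref{thm:structure-2} already supplies conditionally on $\CF_{\vr_n'}$: $\Pv_{\sss{Y}}$-whp, $\#\{\CP^\star_{v_r,v_b}\}\sim n^{{\beta_n}f^u_n}$ and every vertex in the Phase~(2) segment of \emph{any} shortest path has degree at least $n^{{\beta_n}f^\ell_n(1-\ve)}$. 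Combined with the extension of Theorem~\ref{thm:structure-1} indicated before Observation~\ref{obs:factor} (some shortest path uses a Phase~(2) vertex of degree $\sim n^{{\beta_n}f^\ell_n(1+\ve)}$; sketched in Section~\ref{s:extension}), the \emph{smallest} degree used in Phase~(2) over all shortest paths is $\sim n^{{\beta_n}f^\ell_n}$, $\Pv_{\sss{Y}}$-whp. Since the $\sim$-relation of \eqref{eq:sim} is multiplicative (after slightly enlarging $\theta$, which is harmless even when an exponent is close to $0$), the product in the statement is $\sim n^{{\beta_n}(f^u_n+f^\ell_n)}$; by the dichotomy recorded after \eqref{eq:polinomial}, $f^u_n+f^\ell_n\in\{0,1\}$, and it equals $1$ exactly when $z_n:=\big(1/{\beta_n}-(\tau-2)^{b_n^{\sss{(r)}}({\beta_n})}-(\tau-2)^{b_n^{\sss{(b)}}({\beta_n})}\big)/(3-\tau)\notin\Z$. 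So everything reduces to showing $z_n\notin\Z$ whp.

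The hard part will be this non-integrality claim, which is the only place where Assumption~\ref{assu:pointmass} is used. Write $b_n^{\sss{(q)}}({\beta_n})=\big\{\{u_n\}-\log Y_q^{\sss{(n)}}/|\log(\tau-2)|\big\}$ with $u_n:=\log\log(n^{{\beta_n}})/|\log(\tau-2)|$ deterministic; the branching processes coupled to the two neighbourhoods are independent, so $Y_r^{\sss{(n)}}\perp Y_b^{\sss{(n)}}$, and $(Y_r^{\sss{(n)}},Y_b^{\sss{(n)}})\toindis(Y_r,Y_b)$ of \eqref{def:Y}, with $Y_r,Y_b$ each distributed as the variable $Y$ of Assumption~\ref{assu:pointmass}. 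Under Assumption~\ref{assu:mindeg} the coupled processes are non-decreasing from the first generation onwards and never die out, so $Y_q^{\sss{(n)}}>0$ and $Y>0$ almost surely; with Assumption~\ref{assu:pointmass} this makes the law of $Y$ non-atomic on $[0,\infty)$. Suppose $\Pv(z_n\in\Z)\not\to0$; pass to a subsequence along which it stays $\ge c>0$ and along which the bounded deterministic sequences $\{u_n\}$ and $1/{\beta_n}\bmod(3-\tau)$ converge, to $u_\star$ and $w_\star$. Along it, the (extended) continuous mapping theorem --- legitimate since the non-atomic $Y_q$ a.s.\ misses the countable discontinuity set of the fractional part --- gives $b_n^{\sss{(q)}}({\beta_n})\toindis\{u_\star-\log Y_q/|\log(\tau-2)|\}$, hence $(\tau-2)^{b_n^{\sss{(r)}}({\beta_n})}+(\tau-2)^{b_n^{\sss{(b)}}({\beta_n})}\toindis T$ for a non-atomic $T\in(2(\tau-2),2]$ (a sum of independent variables one of which is non-atomic). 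Therefore $\{z_n\}\toindis\{(w_\star-T)/(3-\tau)\}$, whose law has no atom at $0$, and Portmanteau applied to the closed set $\{0\}$ gives $\limsup\Pv(z_n\in\Z)=\limsup\Pv(\{z_n\}=0)\le\Pv\big(\{(w_\star-T)/(3-\tau)\}=0\big)=0$, a contradiction. Hence $\Pv(z_n\notin\Z)\to1$.

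Finally I would glue the pieces: $\{z_n\notin\Z\}$ is $\CF_{\vr_n'}$-measurable with probability $\to1$; on it $f^u_n+f^\ell_n=1$, and conditionally on $\CF_{\vr_n'}$ the product of the number of shortest paths and the smallest Phase~(2) degree is $\sim n^{{\beta_n}}$, $\Pv_{\sss{Y}}$-whp, by the first paragraph. Taking expectations over $\CF_{\vr_n'}$ yields the whp statement, and since the maximal degree of $\CMD$ is $\sim n^{{\beta_n}}$, this is exactly Observation~\ref{obs:factor-2}. Besides the non-integrality step, the only points needing care are routine: that $Y>0$ almost surely under Assumption~\ref{assu:mindeg} (implicit already in the meaningfulness of $T_q({\beta_n})$ and of the limit in Corollary~\ref{corr:convergence}), and the uniform-in-$n$ multiplicativity of the $\sim$-bounds, immediate from \eqref{eq:sim}.
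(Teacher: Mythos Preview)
Your proposal is correct and follows the paper's approach exactly: reduce Observation~\ref{obs:factor-2} to Observation~\ref{obs:factor} with the random hub-exponents $x_q=(\tau-2)^{b_n^{\sss{(q)}}({\beta_n})}$ supplied by Theorem~\ref{thm:structure-2}, and invoke Assumption~\ref{assu:pointmass} to ensure the non-integrality $z_n\notin\Z$ whp. The paper's entire justification is the single sentence preceding the observation, so your subsequence and continuous-mapping argument for the non-integrality step fills in detail the paper omits, but the route is the same.
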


\subsubsection{Attack vulnerability}
Let us mention that when we remove some set of vertices $\Lambda$ from a configuration model and the edges attached to them (called an attack), the remaining graph is still a configuration model on $[n]\setminus \Lambda$, with a new empirical degree distribution that might become \emph{random}, depending on the type of the attack.
Observe that the shortest path between two vertices in the remaining graph is the same as the shortest path in the original graph restricted to stay among vertices in $[n]\setminus \Lambda$.
When the attack is so that it removes all vertices above a certain degree, we call it a \emph{targeted attack} or \emph{deliberate attack}.
This is the meaning of setting $\wit\beta_n< \beta_n$ in Theorem \ref{thm:structure-1} above.

An immediate corollary of the proof of Theorems \ref{thm:distances} and \ref{thm:structure-1} is that our results remain valid in the configuration model with targeted attack as well, that is, when instead of truncating the degrees, we \emph{remove} all vertices with degree at least $n^{\wit{\beta_n}}$ from a configuration model. Equivalently, we can consider the length of the shortest path restricted to stay among vertices with degree at most $n^{\wit{\beta_n}}$.

\begin{corollary}\label{cor:attack}
Let us consider the configuration model under the same assumptions as the ones in Theorem \ref{thm:distances}. For a sequence $\wit{\beta_n}$ with $\wit{\beta_n} (\log n)^\gamma \to \infty$ for some $\gamma<1$, let us remove all vertices with degree at least $n^{\wit{\beta_n}}$ and the edges attached to them from the graph on $n$ vertices.
Then, the typical distance between two vertices $v_r, v_b$ chosen uniformly at random from the remaining set of vertices satisfies whp
\be\label{eq:distance-converge-p3}\begin{split}
	\mathrm d_G(v_r, v_b| \Lambda_{\le \wit{\beta_n}})- \frac{2\log\log (n^{\wit{\beta_n}})}{|\log (\tau-2)|} -\left\lceil    \frac{1/(\wit{\beta_n})-(\tau-2)^{b_n^{\sss{(r)}}(\wit{\beta_n})}-(\tau-2)^{b_n^{\sss{(b)}}(\wit{\beta_n})}}{3-\tau} \right\rceil\\+b_n^{(r)}(\wit{\beta_n})+b_n^{(b)}(\wit{\beta_n}) 	 \toindis -1+ \frac{ -\log  (Y_rY_b)}{|\log (\tau-2)|}.
\end{split}\ee
Further, Theorem \ref{thm:structure-2} also remains valid in this setting, with ${\beta_n}$ replaced by $\wit{\beta_n}$ everywhere.
\end{corollary}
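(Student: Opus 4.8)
The plan is to identify the attacked graph as a configuration model with a new (random) degree sequence, transfer Assumptions~\ref{assu:degree-dist}--\ref{assu:tv} to it, and then simply quote Theorems~\ref{thm:distances} and~\ref{thm:structure-2}. Write $\Lambda:=\{v\in[n]:d_v\ge n^{\wit{\beta_n}}\}$ for the removed set and $G'$ for the graph obtained from $\CMD$ by deleting $\Lambda$ and all incident edges. For $v\notin\Lambda$ let $B_v$ be the number of half-edges of $v$ that are paired to half-edges incident to $\Lambda$, and put $\wit d_v:=d_v-B_v$ and $\wit{\boldsymbol d}:=(\wit d_v)_{v\notin\Lambda}$. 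By the standard restriction property of the configuration model, conditionally on the matching between half-edges of $\Lambda$ and those of $[n]\setminus\Lambda$ (hence, in particular, conditionally on $\wit{\boldsymbol d}$), the edges of $\CMD$ lying inside $[n]\setminus\Lambda$ form a uniform pairing of the residual half-edges, so $G'\equalsd\mathrm{CM}_{n-|\Lambda|}(\wit{\boldsymbol d})$; moreover a path in $\CMD$ using only vertices of $\Lambda_{\le\wit{\beta_n}}$ is exactly a path in $G'$, so $\mathrm d_G(v_r,v_b\mid\Lambda_{\le\wit{\beta_n}})$ equals the distance between $v_r,v_b$ in $\mathrm{CM}_{n-|\Lambda|}(\wit{\boldsymbol d})$. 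It therefore suffices to show that, whp over the deletion, $\wit{\boldsymbol d}$ satisfies Assumptions~\ref{assu:degree-dist}--\ref{assu:tv} (and~\ref{assu:pointmass} when needed) with truncation exponent $\wit{\beta_n}$ and with the \emph{same} limiting laws $F,F^\star$, and then to apply Theorems~\ref{thm:distances} and~\ref{thm:structure-2} on that event.

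For the transfer I would first record, using Assumption~\ref{assu:degree-dist} and a Karamata estimate, that the total deleted degree $\sum_{v\in\Lambda}d_v$ is at most $n^{1-\wit{\beta_n}(\tau-2)}\exp\{C(\wit{\beta_n}\log n)^\eta\}$, so the probability $p_n$ that a fixed half-edge is paired into $\Lambda$ is at most $n^{-\wit{\beta_n}(\tau-2-\ve)}$ for large $n$; since $\wit{\beta_n}(\log n)^\gamma\to\infty$ forces $\wit{\beta_n}\log n\to\infty$, we get $p_n\to0$. Each $B_v$ is hypergeometric with mean $\asymp d_v p_n$ and hence obeys the Chernoff bounds of $\mathrm{Bin}(d_v,p_n)$; combined with $\wit d_v\le d_v<n^{\wit{\beta_n}}$ for $v\notin\Lambda$, this gives $\wit F_n(x)=1$ for $x\ge n^{\wit{\beta_n}}$, while for $x\le n^{\wit{\beta_n}(1-\ve)}$ the count of $\{v:\wit d_v\ge x\}$ equals $(1+o(1))$ times that of $\{v:d_v\ge x(1+p_n)\}$, so $1-\wit F_n(x)=\wit L_n(x)/x^{\tau-1}$ with $\wit L_n(x)\asymp(1+p_n)^{-(\tau-1)}L_n(x(1+p_n))$ still satisfying~\eqref{eq:L}, and similarly for $\wit F_n^\star$ via~\eqref{eq:size-biased2}; since $\wit{\beta_n}\le{\beta_n}\le 1/(\tau-1)$, Assumption~\ref{assu:degree-dist} holds for $\wit{\boldsymbol d}$ with exponent $\wit{\beta_n}$. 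For Assumption~\ref{assu:tv} the crucial point is that the mass deleted from degree level $k$ is, up to an $O(p_n)$ relative error, recovered by the mass arriving at level $k$ from level $k+1$; what survives this cancellation is controlled by the $F_n$-mass of the transition window $[n^{\wit{\beta_n}(1-\ve)},n^{\wit{\beta_n}}]$, which is at most $n^{-\wit{\beta_n}(1-\ve)(\tau-1)}$ (respectively $n^{-\wit{\beta_n}(1-\ve)(\tau-2)}$ for the size-biased law). Hence $\dTV(\wit F_n,F_n)+\dTV(\wit F_n^\star,F_n^\star)=O(n^{-\wit{\beta_n}\wit\kappa})$ for some $\wit\kappa>0$; a second-moment argument over the random pairing makes this hold whp, and combined with Assumption~\ref{assu:tv} for $F_n$ and $\wit{\beta_n}\le{\beta_n}$ it yields Assumption~\ref{assu:tv} for $\wit{\boldsymbol d}$ with the unchanged limits $F,F^\star$. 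In particular the branching process of Definition~\ref{def:limit-variables}, and hence $Y_r,Y_b$ and $Y$, are unchanged, so that if $\wit{\beta_n}\to 1/(\tau-1)$ (which forces ${\beta_n}\to 1/(\tau-1)$) Assumption~\ref{assu:pointmass} is among the corollary's hypotheses and transfers verbatim.

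Two minor points remain before concluding. Assumption~\ref{assu:mindeg} can fail for $\wit{\boldsymbol d}$, but only at $O(np_n)=o(n)$ vertices (degree-$2$ vertices that lose a half-edge); as in Remark~\ref{rem:no-mindeg} this is dealt with by conditioning on $v_r,v_b$ lying in the giant component of $\mathrm{CM}_{n-|\Lambda|}(\wit{\boldsymbol d})$, whose size is $(1-o_\Pv(1))(n-|\Lambda|)$ because $\wit F_n$ still has asymptotically infinite variance (so the graph is supercritical), whence the conditioning is vacuous whp and is absorbed into the ``whp'' of the statement. Also, replacing $n$ by $n-|\Lambda|=n(1-o(1))$ in the truncation level multiplies $n^{\wit{\beta_n}}$ by a factor $1+o(1)$ and shifts $\log\log(n^{\wit{\beta_n}})$ by $o(1)$, which is immaterial for the distributional statements. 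On the whp event that $\wit{\boldsymbol d}$ is admissible, Theorem~\ref{thm:distances} and Corollary~\ref{corr:convergence} applied to $\mathrm{CM}_{n-|\Lambda|}(\wit{\boldsymbol d})$ deliver exactly~\eqref{eq:distance-converge-p3}, and Theorem~\ref{thm:structure-2} delivers its analogue with ${\beta_n}$ replaced by $\wit{\beta_n}$ throughout; undoing the identification from Step~1 completes the proof.

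The hard part is the quantitative half of Assumption~\ref{assu:tv} for the size-biased distribution. A naive bound that simply adds up the mass removed from each degree level $k$ (which is of order $\Pv(D_n=k)\,k p_n$) produces $\sum_k k\cdot\Pv(D_n=k)\,kp_n\asymp p_n\,\E[D_n^2\ind_{\{D_n\le n^{\wit{\beta_n}}\}}]\asymp n^{\wit{\beta_n}(5-2\tau)}$, which fails to tend to $0$ once $\tau<5/2$. One genuinely has to use the cancellation between mass leaving level $k$ and mass entering it from level $k+1$, and to track the slowly-varying prefactors $L_n,L_n^\star$ carefully enough — within the latitude allowed by~\eqref{eq:L} — that only the small transition-window mass survives. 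The remaining ingredients (the restriction property, the Chernoff and second-moment concentration over the pairing, and the final invocation of the main theorems) are routine.
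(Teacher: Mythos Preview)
Your approach is correct, and it is precisely what the paper itself sketches as an ``alternative proof'' in the paragraph immediately following Corollary~\ref{cor:attack}. The paper's own (primary) argument is different and more direct: it does not pass to a new configuration model at all. Instead it uses the observation that the distance in the attacked graph equals the restricted distance $\mathrm d_G(v_r,v_b\mid\Lambda_{\le\wit{\beta_n}})$ in the \emph{original} $\CMD$, and then notes that every ingredient of the proofs of Theorems~\ref{thm:distances} and~\ref{thm:structure-1}---the BP coupling of Section~\ref{s:couple}, the shell construction and Lemma~\ref{lem:badpaths} in Sections~\ref{s:shells}--\ref{s:badpaths}, and the first/second moment path counts of Section~\ref{s:main-proof}---goes through verbatim when one replaces ${\beta_n}$ by $\wit{\beta_n}$ in the definitions of $\vr_n'$, $i_{\star\sss{(q)}}$, $T_q$, and restricts every sum $\sum_{\pi_i}$ to $\Lambda_{\le\wit{\beta_n}}$. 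The only place the actual truncation level enters quantitatively is through the truncated second moment in~\eqref{eq:edn}, which by Claim~\ref{cl:tech} becomes $\lesssim n^{\wit{\beta_n}(3-\tau)}$ under the restriction. Crucially, the BP coupling is still to the \emph{same} limiting $F^\star$ (so $Y_r,Y_b$ are unchanged), because in the coupling phase no vertex of degree $\ge n^{\wit{\beta_n}}$ is encountered whp once $\vr_n'$ is chosen proportional to $\wit{\beta_n}$.

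What each route buys: the paper's direct approach completely sidesteps what you correctly flag as the hard part---the quantitative $\dTV(\wit F_n^\star,F^\star)\le n^{-\wit{\beta_n}\wit\kappa}$ verification for the random thinned sequence, with its delicate cancellation between mass leaving and entering each degree level---and it never needs Remark~\ref{rem:no-mindeg} to handle degree-$1$ vertices created by the thinning. Your approach is more modular (it uses Theorems~\ref{thm:distances} and~\ref{thm:structure-2} as black boxes) and makes transparent \emph{why} only $\wit{\beta_n}$ survives in the final answer, but pays for this with genuine technical work that the direct argument avoids entirely.
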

This corollary sheds light on the effect of a targeted attack - commonly known as the \emph{attack vulnerability} of the network. In fact, Corollary \ref{cor:attack} describes the way typical distances grow when we (gradually) remove the `core' of the graph, meaning all vertices with degree at least $n^{\wit{\beta_n}}$. For example, starting with a configuration model with i.i.d.\ degrees, (corresponding to ${\beta_n}\equiv 1/(\tau-1)$), one has to go as far as to choose $\wit{\beta_n}=o(1/\log \log n)$ to change the order of magnitude of the length of shortest paths.

An alternative proof of this corollary could be the following: It can be shown that the number of vertices in $\Lambda_{\le \wit {\beta_n}}=o(n)$, so, only $o(n)$ many vertices are removed. Then, one can show that -- even though these are the highest degree vertices -- the total number of half-edges attached to the removed vertices is still $o(n)$. When considering the degree of a remaining vertex in the remaining graph, the half-edges that were matched to removed vertices should also be removed. This results in a \emph{thinning} of the degrees. This thinning is \emph{not independent} for different half-edges and vertices. However, by a stochastic domination argument one can still show that the resulting new degree distribution still satisfies the conditions of Theorem \ref{thm:distances}, with now ${\beta_n}$ replaced by $\wit{\beta_n}$.  This makes clear why all expressions in \eqref{eq:distance-converge-p3} depend only on $\wit{\beta_n}$ and not on the original ${\beta_n}$.

\subsection{Examples}\label{s:examples}
 Note that Assumption \ref{assu:degree-dist} is satisfied in the following cases that we keep in mind to study:

 \begin{example}\label{ex:iid}\normalfont The first example arises when the degrees are \emph{independent and identically distributed} from a background power-law distribution $F$ that satisfies \eqref{eq:F} and \eqref{eq:L} (for all $x \in \R$). In this case, it is not hard to see that the order of magnitude of the maximal degree in the graph is $n^{(1+o_{\Pv}(1))/(\tau-1)}$ whp.  Further, using the concentration of binomial random variables (see \cite{HHZ07} for the computations) shows that whp the empirical degree distribution satisfies Assumption \ref{assu:degree-dist}, with ${\beta_n}=1/(\tau-1)$, with a possibly larger constant $C$ in the bound on $L_n(x)$ than the one in the background distribution $F$.
 \end{example}
 Pure power-law degrees were found for example in the internet backbone network \cite{Falo1999}, in metabolic reaction networks \cite{Jeon2000}, in telephone call graphs \cite{Nana2006}, and most famously, in the world-wide-web \cite{Bara1999, Brod2000, Kuma1999}.
 \begin{remark}[Fluctuations of typical distances in the i.i.d.\ degree case]\normalfont
In the special case of i.i.d.\ degrees as in Example \ref{ex:iid}, the value ${\beta_n}=1/(\tau-1)$. Under Assumption \ref{assu:pointmass}, the upper integer part in \eqref{eq:dist-beta-small-p2} simplifies to either $0$ or $1$, and typical distances in this case become
\be  \label{eq:distance-iid}  \mathrm d_G(v_r, v_b) = T_r({\beta_n}) + T_b({\beta_n}) + 2 - \ind\{ \tau-1<(\tau-2)^{\bnr({\beta_n})} + (\tau-2)^{\bnb({\beta_n})}\} .\ee
 We emphasize that Theorem \ref{thm:distances} implies that the typical distances in the graph are concentrated around $2 \log\log n / |\log (\tau-2)|$ with bounded fluctuations, a result that already appeared in \cite{HHZ07} for the i.i.d.\ degree case. The statement of Corollary \ref{corr:convergence} `filters out' the bounded oscillations arising from fractional part issues
that oscillate with $n$. We emphasize here that the statement of Theorem \ref{thm:distances} applied to i.i.d.\ degrees and \cite[Theorem 1.2]{HHZ07} are essentially the same. However, they provide a different description of typical distances. The current proof here is much shorter than the one in \cite{HHZ07} as well as it allows to treat the truncated degree case at the same time. We show in Section \ref{s:compare} that the two theorems are indeed the same.
\end{remark}

\begin{example}[Exponential truncation]\label{ex:trunc}\normalfont The degrees are generated i.i.d. from an $n$-dependent \emph{truncated power law} distribution $F^{(n)}$ that can be written in the form
\be\label{eq:truncated-power}  1-F^{(n)}(x)=\frac{L^{(n)}(x)}{x^{\tau-1}} \exp\{ -c x / n^{\beta_n}\}, \ee
with $L^{(n)}$ satisfying \eqref{eq:L}.  In this case, the empirical distribution $F_n(x)$ satisfies Assumption \ref{assu:degree-dist} for all sufficiently large $n$, since for any $x\le n^{{\beta_n}(1-\ve)}$, the exponential term is at least $1/2$, say, while for any $x\ge n^{{\beta_n}(1+\ve)}$, $F^{(n)}(x)=O(1/n^2)$, thus we shall not actually see vertices with such high degree in the graph for large enough $n$.
A special case arises when $(d_i)_{i\in [n]}$ are i.i.d. with $d_i=\min\{ X_i, G_i\}$, where  $G_i$ are i.i.d.\ geometric random variables with mean $\exp\{ c/n^{\beta_n}\}$, and $X_i$ i.i.d.\ as in Example \ref{ex:iid}.
 \end{example}
 Power-law degrees with exponential truncation were proposed e.g. in \cite{Mossa2002}, and are observed for instance in the movie actor network \cite{AmaScaBarSta00}, air transportation networks \cite{Guim2005} and co-authorship networks \cite{Newman2001, Stro2001}, brain functional networks \cite{Acha2006}, ecological networks \cite{Mont2006} such as coevolutionary networks of plant-animal interactions \cite{Jord2003}.
 \begin{example}[Hard truncation]\label{ex:trunc-hard}\normalfont The degrees are generated i.i.d.\ from an $n-dependent$ \emph{truncated power-law} that can be written in the form
\be\label{eq:truncated-power}  F^{(n)}(x)=\frac{L^{(n)}(x)}{x^{\tau-1}} \ind_{x<n^{\beta_n}}, \ee
with $L^{(n)}$ satisfying \eqref{eq:L}.
A special case again arises when $(d_i)_{i\in [n]}$ are i.i.d.\ with $d_i=\min\{X_i, n^{\beta_n}\}$, where $X_i$ are i.i.d. as in Example \ref{ex:iid}.
\end{example}
Probably the most important example for hard truncation is a \emph{targeted attack}, since in this case every vertex above a certain degree in the network is removed. Scale-free graphs are often called attack vulnerable, see e.g. \cite{AlBarHaw00, CohEre01, HolKim02}.
A theoretical example where the authors use a network model with hard truncation can be found in \cite{GaoWor2016} or \cite{McKay1991}.

Perhaps surprisingly, the online social network of Facebook does not seem to follow a truncated power-law \cite{Ugan2011}, even though the total number of friends of a person was limited to $5000$ at the time of the measurement.
Nevertheless, we would like to emphasize that our theorem allows for many possible truncations functions, among which the hard truncation is possibly the most strict.

\subsection{Discussion and open questions}\label{s:discussion}
\subsubsection*{Heuristic explanation of the formula in Theorem \ref{thm:distances}}
In Theorem \ref{thm:distances}, we have determined that the distances are centered around
\[ \frac{2 \log\log (n^{{\beta_n}})}{|\log (\tau-2)|} + \frac{1}{{\beta_n}(3-\tau)}  \]
with tight fluctuations.
Here we give a heuristic explanation of this formula.
For graphs with locally tree-like structure, the usual BP approximation says that the number of vertices that are reachable on a path of length $k$ from $v_r$,
is approximately $Z_k^{\sss{(n)}}$, the size of generation $k$ of a BP with offspring $D_n$ in the first and $D_n^\star$ in the consecutive generations.
Generating function methods then yield that $\Ev[Z_k^{\sss{(n)}}]=\Ev[D_n] \nu_n^{k-1}$, with $\nu_n=\Ev[D_n^\star]\sim n^{{\beta_n}(3-\tau)}$ as in \eqref{eq:edn} below, and then the approximation $Z_k^{\sss{(n)}} \approx \Ev[Z_k^{\sss{(n)}}]$ is often used. Unfortunately, for heavily skewed distributions like that of $D_n^\star$, it does not hold that $Z_k^{\sss{(n)}}\approx \Ev[Z_k^{\sss{(n)}}]$. This is so because $\Ev[D_n^\star]$ is characterised by the highest degree vertices, of degree $n^{{\beta_n}}$, while, on the other hand, low degree vertices are typically not connected to these hubs in the graph, and thus $Z_k^{\sss{(n)}} \ll \Ev[Z_k^{\sss{(n)}}]$.

It is true however that
\be\label{eq:approx-1}Z_k^{\sss{(n)}}\approx C^{1/(\tau-2)^k},\ee for some random constant $C$ \cite{D78}. Thus, as long as the BP approximation is valid, we see a `degree-increasing phase' within the exploration clusters of the vertices $v_r,v_b$. From the approximation \eqref{eq:approx-1} it already follows that it takes $\log \log (n^{{\beta_n}})/|(\log(\tau-2))|+$tight number of steps to reach a hub $v_q^\star$ in the graph, for $q\in\{r,b\}$. Extreme value theory tells us that any hub will have some neighbors that are also hubs, and thus the approximation that $D_n^\star\approx \Ev[D_n^\star]$ and consequently $Z_k^{\sss{(n)}}\approx \Ev[Z_k^{\sss{(n)}}]$ suddenly becomes valid when considering the number of vertices of distance $k$ away from $v_q^\star$.
This means that it takes an \emph{additional} $\log n/\log\nu_n=1/{\beta_n}(3-\tau)+$tight number of steps to connect the two hubs $v_r^\star, v_b^\star$ to each other. This explains the formula in Theorem \ref{thm:distances}.

 \subsubsection*{Comment about `structural cutoff'}  Often in physics literature, ${\beta_n}=1/2$ is called `structural cut-off' \cite{Osti2014}.
 When ${\beta_n}>1/2$, vertices with degree at least $n^{1/2+\ve}$ form a complete subgraph of the graph, while for ${\beta_n}<1/2$ this complete subgraph is not present. Further, when ${\beta_n}>1/2$,   a growing number of multiple edges appears, while for ${\beta_n}<1/2$, the number of multiple edges in the graph stays bounded. 
Our theorems show that there is no significantly different behavior of typical distances when the truncation happens below versus  above the structural cutoff $n^{1/2}$.

\subsubsection*{Open questions}
We believe the criterion ${\beta_n} (\log n)^\gamma\to \infty$ for some $\gamma\in (0,1)$ can be relaxed to be ${\beta_n} \log n\to \infty$, at least when one imposes more strict bounds on the slowly varying function $L_n(x)$ in \eqref{eq:F}. The criterion ${\beta_n} \log n\to \infty$ is the weakest form that is necessary for the empirical second moment to tend to infinity. Provided one can generalize our results to hold whenever ${\beta_n} \log n\to \infty$, we obtain a perfect interpolation between doubly logarithmic and logarithmic distances.
When ${\beta_n} \log n=\theta(1)$, the empirical second moment remains bounded and thus a finite mean BP approximation becomes available.

\subsubsection*{Notation}
We write $[n]$ for the set of integers $\{1,2,\dots, n\}$.
As usual, we write i.i.d.\ for independent and identically distributed, lhs and rhs for left-hand side and right-hand side.  We use $\toindis, \toinp, \toas$ for convergence in distribution, in probability and almost surely, respectively. We use the Landau symbols $o(\cdot), O(\cdot), \Theta(\cdot)$ in the usual way. For sequences of random or deterministic variables $X_n, Y_n$ we further write $X_n = o_{\Pv}(Y_n)$ and $X_n = O_{\Pv}(Y_n)$ if the sequence $X_n/Y_n \toinp 0 $ and is tight, respectively.

Constants are typically denoted by $c$ in lower and $C$ in upper bounds (possible with indices to indicate which constant is coming from which bound), and their precise values might change from line to line.
 We introduce  $(\CC^{\sss{(r)}}, \CC^{\sss{(b)}}):=(\CC^{\sss{(r)}},\CC^{\sss{(b)}})$.
 At some time $t$ along the matching or the exploration, for a set of vertices $\CA_t$ we denote the set of unpaired half-edges at that moment attached to vertices in $\CA_t$ as $\CH(\CA_t)$ and its size by $H(\CA_t)$. Thus $H(\CA_t)=\sum_{v\in \CA_t} \sum_{s \text{ half-edge attached to v}} \ind_{s \text{ not paired yet}}$. When the time is set to be $0$, $H(\CA)=\sum_{v\in\CA} d_v$. We write $d_v$ for the degree of vertex $v$.

\subsection{Overview of the proof}
To determine the distance between $v_r, v_b$, we start growing two clusters that we call red and blue, respectively, in a breadth-first-search manner, and see how these two clusters reach the highest degree vertices. As long as the two clusters are disjoint, the growth is not necessarily simultaneous, i.e., we might stop the growth of one color earlier.
To describe the growing clusters, we extensively use the fact that in the configuration model half-edges can be paired in an arbitrarily chosen order. This allows for a joint construction of the graph together with the growing of the two colored clusters. 
In Proposition \ref{prop:hubs} (Section \ref{s:hubs}) below, we show that the highest degree vertex $v_q^\star$, $q\in\{r,b\}$ that is reached by this path is of degree
\be\label{eq:dvq-1} d_{v_q^\star}\sim n^{{\beta_n}(\tau-2)^{b_n^{(q)}({\beta_n})}},\ee for $q\in\{r,b\}$, respectively.  The total length of this path from $v_q$ is $T_q({\beta_n})$ as in \eqref{def:ti-bi} for $q\in\{r,b\}$. The proof of this proposition has the following ingredients:
We couple the initial stages of the growth to two independent branching processes (BPs) (Section \ref{s:couple}). The coupling fails when  one of the colors (wlog we assume it is red)
    reaches  size $n^{\vr_n'}$ for some $\vr_n'>0$ sufficiently small.  From the half-edges attached to the BP cluster of $v_q, q\in\{r,b\}$, we build a path through higher and higher degree vertices to a vertex with degree at least $\sim n^{(\tau-2){\beta_n}}$.
In Lemma \ref{lem:badpaths}  we give an upper bound on the degree of the maximal-degree vertex reached at any time $t(n^{\vr_n'})+i$ of the exploration, implying that in $T_q({\beta_n})$ hops no vertex of degree higher than $\sim n^{{\beta_n}(\tau-2)^{b_n^{(q)}({\beta_n})}}$  is reached from $v_q$. Thus, this lemma serves also as a building block for the proof of Proposition \ref{prop:hubs}, but beyond that, it also enables us to show two other important things, that form the content of  Proposition \ref{lem:no-early-meeting} (Section \ref{s:no-early}):\\
(1) An early meeting is highly unlikely, i.e., the clusters $\CC^{\sss{(q)}}_{T_q({\beta_n})}$ are $\Pv_{\sss{Y}}$-whp disjoint. \\
(2) The quantity in \eqref{eq:dvq-1}  bounds also the \emph{total number of half-edges} attached to the explored cluster $\CC^{\sss{(q)}}_{T_q({\beta_n})}$.

 Finally, in Section \ref{s:main-proof} we finish the proof of Theorem \ref{thm:distances}. For the lower bound, we count the number of $z$-length paths between the two disjoint clusters $\CC^{\sss{(q)}}_{T_q({\beta_n})}$. Here, we use a first moment method (i.e., we show that the expected number of paths is $o(1)$) when $z$ is less than  the expression in \eqref{eq:connecting-segment}. For the upper bound, we establish the existence of a path of length as in \eqref{eq:connecting-segment} between $v_r^\star, v_b^\star$. We do this using a second moment method. This completes the proof of Theorem \ref{thm:distances}. We prove Theorems \ref{thm:structure-1} and \ref{thm:structure-2} in Section \ref{s:extension}. In Section \ref{s:compare} we compare Theorem \ref{thm:distances} in the special case $\beta_n\equiv 1/(\tau-1)$ to the result in \cite{HHZ07}.

\section{Distance from the hubs}\label{s:hubs}
In this section we analyse the distance of $v_r, v_b$ from the highest-degree vertices. The construction is similar to that of \cite[Section 3]{BarHofKom14}, however, since Assumption \ref{assu:degree-dist} on $F_n$ is weaker than the one in \cite{BarHofKom14}, we need to modify the proof. Recall that we
call the vertices with degree at least $n^{{\beta_n}(\tau-2)}$ hubs and recall $T_q({\beta_n}), \bnq({\beta_n})$ from \eqref{def:ti-bi}. More precisely, let us define
\be\label{eq:hubs} \mathrm{hubs}:=\{ v\in[n]: d_v\ge n^{{\beta_n}(\tau-2)}\},\ee
and for a set of vertices $\mathcal A \subseteq[n]$ and a vertex $v\in[n]$,
\be\label{eq:dist-set} \mathrm d_G(v, \mathcal A):= \min_{a\in \mathcal A} \mathrm d_G(v, a).\ee
The next proposition determines the distance of the uniformly chosen vertices $v_r, v_b$ from the hubs:
\begin{proposition}[Distance from the hubs]\label{prop:hubs}
Let us consider the configuration model on $n$ vertices with empirical degree distribution that satisfies Assumption \ref{assu:degree-dist}, and let $v_r, v_b$ be two uniformly chosen vertices. Then, for $q\in\{r,b\}$, $\Pv_{\sss{Y}}$-whp,
\[ \mathrm d_G(v_q, \mathrm{hubs})=T_q({\beta_n})=\frac{\log\log n+\log \left({\beta_n}/Y_n^{\sss{(q)}}\right)}{|\log(\tau-2)|} -1-\bnq({\beta_n}).\]
More precisely, $\Pv_{\sss{Y}}$-whp there is a vertex $v^\star_q\in \mathrm{hubs}$ at distance $T_q({\beta_n})$ away from $v_q$ with degree
\be \label{eq:degree-hub} d_{v_q^\star}\sim n^{{\beta_n}(\tau-2)^{\bnq({\beta_n})}}, \ee
while all vertices at distance at most $T_q({\beta_n})-1$ from $v_q$ are not hubs.
\end{proposition}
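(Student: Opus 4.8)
The plan is to follow the breadth-first exploration of the neighbourhood of $v_q$, tracking the logarithm of the largest degree present in the explored cluster, and to show that outside the initial branching-process regime this logarithm is multiplied by $1/(\tau-2)$ at each further exploration step. I would split the argument into three parts: a coupling phase (producing the $\sigma$-algebra $\CF_{\vr_n'}$ and the random variable $Y_q^{\sss{(n)}}$), a greedy path-growing phase giving the lower bound on how quickly the hub set is reached, and a first-moment phase giving the matching upper bound.

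In the coupling phase I would run the breadth-first exploration of $\CC^{\sss{(q)}}_t$ and use Lemma \ref{lem:couple} and Corollary \ref{corr:couple} to couple it to a Galton--Watson process with offspring $F$ in the first generation and $F^\star$ afterwards; this is valid up to the stopping time $t_0:=t(n^{\vr_n'})$, at which $Z_{t_0}^{\sss{(q)}}\sim n^{\vr_n'}$. Since by \eqref{eq:size-biased2} the size-biased tail satisfies $1-F^\star_n(x)=L^\star_n(x)\,x^{-(\tau-2)}$ with $\tau-2\in(0,1)$, on the logarithmic scale underlying the $\sim$-notation the number of free half-edges $H(\CC^{\sss{(q)}}_{t_0})$ is governed by the single largest-degree vertex of the cluster (a sum of such heavy-tailed forward degrees being comparable, on the log scale, to its maximum), which gives $\log H(\CC^{\sss{(q)}}_{t_0})=(1+o_{\Pv}(1))\,\frac{1}{\tau-2}\log Z_{t_0}^{\sss{(q)}}=(1+o_{\Pv}(1))\,Y_q^{\sss{(n)}}(\tau-2)^{-(t_0+1)}$, using $Y_q^{\sss{(n)}}=(\tau-2)^{t_0}\log Z_{t_0}^{\sss{(q)}}$ from Definition \ref{def:limit-variables}. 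From here on I would argue under $\Pv_{\sss{Y}}$, i.e.\ conditionally on $\CF_{\vr_n'}$, so that $Y_q^{\sss{(n)}}$ is a known constant.

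For the lower bound I would grow a path greedily out of the explored cluster: if the current frontier vertex has degree $d$, then the $\ge d-1$ free half-edges currently in the cluster are paired into the pool of $\ell_n=\Theta(n)$ half-edges of the graph, and since the number of half-edges incident to vertices of degree $\ge y$ is $\sim n\,y^{2-\tau}$ for $y\le n^{{\beta_n}}$ (by \eqref{eq:F} and a Karamata estimate), the probability that at least one of these pairings hits a vertex of degree $\sim y$ tends to one whenever $d\,y^{2-\tau}\to\infty$, i.e.\ whenever $y\ll d^{1/(\tau-2)}$; if $d^{1/(\tau-2)}>n^{{\beta_n}}$ one instead reaches the maximal degree $\sim n^{{\beta_n}}$. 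One checks along the way that the cluster keeps $n^{{\beta_n}(1+o(1))}=o(n)$ vertices and half-edges, so the graph is never exhausted and path self-intersections are negligible. Iterating from $H(\CC^{\sss{(q)}}_{t_0})$, the largest degree present after $t_0+i$ exploration steps has logarithm $(1+o_{\Pv}(1))\,Y_q^{\sss{(n)}}(\tau-2)^{-(t_0+i+1)}$. Writing $A_q:=\big(\log\log(n^{{\beta_n}})-\log Y_q^{\sss{(n)}}\big)/|\log(\tau-2)|$, a short computation with floors then shows that the first distance at which this logarithm reaches the hub threshold ${\beta_n}(\tau-2)\log n$ equals $\lfloor A_q\rfloor-1=T_q({\beta_n})$, and that at that distance the degree is $\sim n^{{\beta_n}(\tau-2)^{\{A_q\}}}=n^{{\beta_n}(\tau-2)^{\bnq({\beta_n})}}$, which is \eqref{eq:degree-hub}; moreover $v_q^\star$ may be taken as the maximal-degree vertex reachable from $v_q$ in $T_q({\beta_n})$ steps.

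For the matching upper bound I would show that $\Pv_{\sss{Y}}$-whp no vertex of degree exceeding $\sim n^{{\beta_n}(\tau-2)^{\bnq({\beta_n})}}$ is met within $T_q({\beta_n})$ steps, and in particular that no hub is met within $T_q({\beta_n})-1$ steps: the point there is a one-level safety gap, since at distance $T_q({\beta_n})-1$ the bound gives degree $\lesssim n^{{\beta_n}(\tau-2)^{1+\bnq({\beta_n})}}$ with $(\tau-2)^{1+\bnq({\beta_n})}<\tau-2$. The tool is a first-moment estimate over self-avoiding paths emanating from $\CC^{\sss{(q)}}_{t_0}$: the expected number of length-$k$ paths along which the degree ever jumps from some value $d$ to more than $d^{(1+\ve)/(\tau-2)}$ is, after summing over admissible degree sequences, $o(1)$, because the count $\sim n\,d^{-(\tau-1)}$ of candidate targets at each step is overwhelmed by the price of reaching that far into the tail, so that each such fast jump contributes a factor polynomially small in $n$. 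This is precisely Lemma \ref{lem:badpaths} (and it also underlies Proposition \ref{lem:no-early-meeting}), and I expect this first-moment bound to be the main obstacle: the family of candidate paths is large, so the union bound must still close with polynomial room to spare, which forces one to keep careful track of the slowly varying factors $L_n,L^\star_n$. Combining the lower bound (an explicit hub $v_q^\star$ at distance $T_q({\beta_n})$ of degree $\sim n^{{\beta_n}(\tau-2)^{\bnq({\beta_n})}}$) with the upper bound (no vertex of $\mathrm{hubs}$ is met before distance $T_q({\beta_n})$) then yields $\mathrm d_G(v_q,\mathrm{hubs})=T_q({\beta_n})$ together with \eqref{eq:degree-hub}, as desired.
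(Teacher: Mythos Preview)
Your proposal is correct and follows the same three-part architecture as the paper: couple the BFS to a branching process up to $t(n^{\vr_n'})$, climb to the hubs from the resulting cluster, and use the first-moment path count of Lemma~\ref{lem:badpaths} to show no hub is reached earlier. You have also correctly identified Lemma~\ref{lem:badpaths} as the main technical obstacle.

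The only substantive difference is in the climbing step. You argue greedily, tracking $(1+o_{\Pv}(1))$ errors on $\log(\text{degree})$; the paper instead introduces an explicit correction function $h(x)=\exp\{C(\log x)^\gamma\}$ calibrated to the bound \eqref{eq:L} on $L_n,L_n^\star$, defines shells $\Gamma_i^{\sss{(q)}}=\{v:d_v>u_i^{\sss{(q)}}\}$ via the recursion $u_{i+1}^{\sss{(q)}}=(u_i^{\sss{(q)}}/h(u_i^{\sss{(q)}}))^{1/(\tau-2)}$, and proves the containment $\Gamma_i^{\sss{(q)}}\subset N(\Gamma_{i+1}^{\sss{(q)}})$ by a union bound. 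The same $h$ reappears (as a multiplier rather than a divisor) in the upper sequence $\widehat u_i^{\sss{(q)}}$ that defines the bad paths in Lemma~\ref{lem:badpaths}, so that the lower and upper degree envelopes match in the sense of Definition~\ref{def:sim} and the floor computation yielding exactly $T_q({\beta_n})$ goes through; your $(1+o_{\Pv}(1))$ on the logarithm would have to be sharpened to this precision to pin down the integer part, and $h$ is precisely the device that does that under the weak assumption \eqref{eq:L}. The shell containment is also slightly stronger than a single greedy path (every vertex of $\Gamma_i$ has a neighbour in $\Gamma_{i+1}$, not just one) and is reused later for Theorem~\ref{thm:structure-2}.
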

The main goal of this section is to prove this proposition. To show the upper bound, the proof has two main steps: the initial stage of the breadth-first-search  exploration (BFS) is coupled to branching process trees (Section \ref{s:couple}), while the later stage uses a decomposition of the vertices with degrees that are polynomial in $n$ into shells (Section \ref{s:shells}). To show the lower bound, we provide an upper bound on the degrees reached by the BFS in any shell at the time of first reaching that particular shell, see Lemma \ref{lem:badpaths} (Section \ref{s:badpaths}). This method is novel compared to the one in \cite{HHZ07}.

\subsection{Coupling the initial stages of BFS to branching processes}\label{s:couple}
In this section we investigate the initial stage of the spreading cluster of $v_r, v_b$. In the construction of the configuration model, at any time that we construct the matching of the half-edges, we are allowed to choose one of the not-yet-paired half-edges arbitrarily, and pair it to a uniformly chosen other not-yet-paired half-edge. Hence, we can do the pairing in an order that corresponds to the breadth-first-search (BFS) exploration started from $v_r, v_b$. That is, first we pair all the outgoing half-edges from the sources $v_r, v_b$ (distance $1$), then we pair the outgoing half-edges from the neighbors of the source vertices (distance $2$), and so on, in a breadth-first-search manner. Whenever we finish pairing all the half-edges attached to vertices at a given graph distance from the source vertices, we increase the distance by $1$. This process of joint construction of the BFS exploration and graph building is often called the \emph{exploration process} in the literature.

Recall that $\CC^{\sss{(r)}}_t, \CC^{\sss{(b)}}_t$ denotes the subgraph that is at distance at most $t$ from $v_r, v_b$.
 \cite[Proposition 4.7]{BHH10} (see also \cite[Lemma 2.2]{BarHofKom14}) shows that that the number of vertices and their forward degrees\footnote{Forward degree means the number of newly available half-edges when a new vertex is discovered, that will be paired to new vertices later on.} in the exploration process can be coupled to i.i.d.\ degrees having distribution function $F_n^\star$ from \eqref{def:size-biased1}, as long as the total number of vertices of the colored clusters is not too large. There, a different assumption is posed on the maximal degree in the graph, so we shortly adjust the proof of \cite[Proposition 4.7]{BHH10} to our setting below in Lemma \ref{lem:couple}.
 The distribution $F^\star_n$ arises from the fact that as long as the set of explored vertices is relatively small, a forward degree $j$ is generated when the uniformly chosen half-edge of a pairing belongs to a vertex with degree $j+1$.  The probability of choosing a half-edge that belongs to a vertex with degree $j+1$ is approximately equal to $(j+1)\Pv(D_n=j+1)/\Ev[D_n]$, and thus $F^\star_n$ and thus $F^\star$ are the natural candidates for the forward degrees in the exploration process.

 \begin{lemma}[Coupling error of the forward degrees]\label{lem:couple} Consider the configuration model with degree sequence that satisfies Assumption \ref{assu:degree-dist}. Then, in the exploration process  started from two uniformly chosen vertices $v_r, v_b$, the forward degrees
 $(X_k^{\sss{(n)}})_{k\le s_n}$ of the first $s_n$ newly discovered vertices can be coupled to an i.i.d. sequence $D_{n,k}^\star$ from distribution $D_n^\star$ with the following error bound
 \be\label{eq:couple-error-0} \Pv( \exists k\le s_n,  D_{n,k}^\star \neq X_k^{\sss{(n)}} ) \le C s_n^2 n^{{\beta_n}(1+\ve)-1}+C n^{- (\tau-2-2\ve)/(\tau+\ve)} s_n^{2(\tau-1+\ve/2)/(\tau+\ve)} \ee
 If further Assumption \ref{assu:tv} holds, then there is a coupling of $(X_k^{\sss{(n)}}, D_{n,k}^\star, D_k^\star)_{k\le s_n}$ with
 
 \be\ba\label{eq:couple-error-0tv} \Pv( \exists k\le s_n,  D_{n,k}^\star \neq X_k^{\sss{(n)}} \text{\,or\,} D_{n,k}^\star \neq D_k^\star) &\le  s_n n^{-{\beta_n}\kappa}+  C s_n^2 n^{{\beta_n}(1+\ve)-1}\\
 &\ \  +C n^{- (\tau-2-2\ve)/(\tau+\ve)} s_n^{2(\tau-1+\ve/2)/(\tau+\ve)}.\ea\ee
  \end{lemma}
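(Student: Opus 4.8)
The plan is to prove the coupling by tracking, step by step, the discrepancy between the forward degrees in the exploration process and an i.i.d.\ sequence, and then bounding the probability of any discrepancy using a union bound over the $s_n$ steps. First I would set up the exploration more carefully: at each step $k$ we pair a half-edge to a uniformly chosen not-yet-paired half-edge, and the ``forward degree'' $X_k^{\sss{(n)}}$ equals $d_v-1$ if the chosen half-edge belongs to a not-yet-discovered vertex $v$, while we declare a ``collision'' (and couple arbitrarily) if the chosen half-edge belongs to an already-discovered vertex. The clean i.i.d.\ sequence $D_{n,k}^\star$ is obtained by the standard device: generate each $D_{n,k}^\star$ by picking a half-edge uniformly among \emph{all} $\ell_n$ half-edges (with replacement across steps), set $D_{n,k}^\star = d_{v}-1$ for the vertex $v$ it belongs to; by the definition \eqref{def:size-biased1} this has law $F_n^\star$. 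The coupling succeeds at step $k$ provided (i) the chosen half-edge is not one of the at most $2k$ already-used half-edges, and (ii) the chosen half-edge does not belong to a vertex already discovered in a previous step, i.e.\ ``no self-intersection.''

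The two error terms in \eqref{eq:couple-error-0} come from bounding, respectively, these two bad events. For event (i): the total number of half-edges used up after $k$ steps is at most $2k \le 2s_n$ (each discovered vertex contributes at most its full degree of used half-edges, but more simply each step uses two half-edges), so the chance of hitting a used half-edge at step $k$ is at most $2s_n/\ell_n$; but a cruder and sufficient bound is to note that a vertex of degree $d$ occupies $d \le n^{{\beta_n}(1+\ve)}$ half-edges, and the number of discovered vertices is at most $s_n$, so the number of ``forbidden'' half-edges at any step is at most $s_n n^{{\beta_n}(1+\ve)}$, giving probability $\le s_n n^{{\beta_n}(1+\ve)}/\ell_n \le C s_n n^{{\beta_n}(1+\ve)-1}$ per step and hence $C s_n^2 n^{{\beta_n}(1+\ve)-1}$ over all steps (here I use $\ell_n \ge n$ by Assumption \ref{assu:mindeg}). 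For event (ii): one must control the probability that two of the $s_n$ independently chosen half-edges land in the same high-degree vertex; this is a birthday-type estimate. By Assumption \ref{assu:degree-dist}, $\P(D_n^\star > x) \le L_n^\star(x) x^{-(\tau-2)}$, so $\E[(D_n^\star+1)^2/\ell_n]$ — the per-pair collision probability — is dominated by the contribution of vertices with degree near the truncation level; truncating the sum at a level $n^{a}$ chosen to balance the contributions below and above, $\sum_{v: d_v \le n^a} d_v^2/\ell_n^2 + \P(\text{some chosen half-edge has } d_v > n^a)$, and optimizing $a$, produces exactly the exponent $-(\tau-2-2\ve)/(\tau+\ve)$ with the power $s_n^{2(\tau-1+\ve/2)/(\tau+\ve)}$ on $s_n$. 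The second statement \eqref{eq:couple-error-0tv} then follows by additionally coupling $D_{n,k}^\star$ (law $F_n^\star$) to $D_k^\star$ (law $F^\star$) via the maximal coupling on each coordinate, which by \eqref{eq:tv-dist-1} fails with probability $\le \dTV(F_n^\star,F^\star) \le n^{-{\beta_n}\kappa}$ per step, hence $s_n n^{-{\beta_n}\kappa}$ in total by a union bound, and adding this to \eqref{eq:couple-error-0}.

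The main obstacle, and the step deserving the most care, is the collision estimate for event (ii) with the \emph{weaker} Assumption \ref{assu:degree-dist} on $L_n$: because $L_n$ is only controlled by $\exp\{\pm C_1(\log x)^\eta\}$ rather than being genuinely slowly varying with nice uniform bounds, one has to be careful that the Karamata-type truncation argument still goes through and that the sub-polynomial factors $\exp\{C_1(\log n^{{\beta_n}})^\eta\}$ get absorbed — they are $o(n^{\delta})$ for every $\delta>0$, so after choosing $\ve$ appropriately they do not affect the stated exponents, but this absorption should be done explicitly. A secondary subtlety is bookkeeping the difference between ``half-edges chosen with replacement'' (clean i.i.d.\ model) and ``without replacement'' (true exploration): the discrepancy between these two is itself controlled by event (i), so one should be careful not to double-count, and it is cleanest to define the coupling so that both sequences read off the \emph{same} uniform half-edge choices and diverge only when a forbidden or repeated half-edge is hit. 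Everything else — the union bound over $k \le s_n$, the elementary degree bounds from Assumption \ref{assu:degree-dist}, and the transfer to $F^\star$ via total variation — is routine.
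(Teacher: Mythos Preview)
Your high-level plan is right --- sample half-edges uniformly, read off $d_v-1$ for the i.i.d.\ sequence, and control the discrepancy via a union bound and a truncation argument --- and your treatment of the total-variation step \eqref{eq:couple-error-0tv} is fine. But there is a genuine gap in the decomposition: you have missed the \emph{cycle-creation} event, and this is precisely what produces the second error term in \eqref{eq:couple-error-0}.

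Your events (i) and (ii) both concern the single half-edge used to \emph{discover} the next vertex (is it already paired? does it land in an already-seen vertex?). Under your definition ``$X_k^{\sss{(n)}}=d_v-1$ if $v$ is new'', you are in fact coupling only what the paper calls $B_k^{\sss{(n)}}$, the number of brother half-edges. But the forward degree in the exploration is $B_k^{\sss{(n)}}$ \emph{minus} those of the $B_k^{\sss{(n)}}$ brother half-edges that, when later paired, lead back into the already-explored set and create cycles. The probability of this at step $k$ is of order $B_k^{\sss{(n)}}\big(\sum_{s\le k}B_s^{\sss{(n)}}\big)/\ell_n$ --- note the extra multiplicative factor $B_k^{\sss{(n)}}$ that your birthday argument lacks. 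Truncating the $B_s^{\sss{(n)}}$ at a level $K_n$ gives a bound $s_n K_n^{-(\tau-2)+\ve}+C n^{-1}s_n^2K_n^{2}$; optimising yields $K_n=(n/s_n)^{1/(\tau+\ve)}$ and hence the exponent $(\tau-2-2\ve)/(\tau+\ve)$ with the $s_n$-power $2(\tau-1+\ve/2)/(\tau+\ve)$. Your pure birthday estimate for two half-edges landing in the same vertex has per-pair probability $\E[D_n^\star+1]/\ell_n$ (not $\E[(D_n^\star+1)^2]/\ell_n$ as you wrote), and optimising that gives $K_n\asymp n/s_n$ and an error $\asymp s_n^{\tau-1}n^{-(\tau-2)}$ --- the wrong exponents. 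So as written, your event (ii) neither addresses cycle creation nor reproduces the stated bound. The fix is to introduce the intermediate variable $B_k^{\sss{(n)}}$, bound $\{B_k^{\sss{(n)}}\neq D_{n,k}^\star\}$ by the crude max-degree argument (your first term), and then separately bound $\{X_k^{\sss{(n)}}<B_k^{\sss{(n)}}\}$ by the truncation argument with the product $B_k^{\sss{(n)}}\sum_s B_s^{\sss{(n)}}$.
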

 By choosing $s_n$ in Lemma \ref{lem:couple} so that the rhs of the bound in \eqref{eq:couple-error-0tv} still tends to zero, we obtain the following corollary:
  \begin{corollary}[Whp coupling of the exploration to two BPs]\label{corr:couple}
  In the configuration model satisfying Assumptions \ref{assu:degree-dist} and  \ref{assu:tv}, let $t$ be such that 
  \be\label{eq:snmax} |\CC^{\sss{(r)}}_t\cup \CC^{\sss{(b)}}_t|\le \min\{n^{{\beta_n}(\kappa-\delta)}, n^{(1-{\beta_n}(1+\ve)-\delta)/2}, n^{-(\tau-2-\ve-\delta)/(2(\tau-1+\ve/2))}\} \ee for some $\delta>0$. Then $(\CC^{\sss{(r)}}_t, \CC^{\sss{(b)}}_t)$ can be whp coupled to two i.i.d.\ BPs  with generation sizes $(Z_{k}^{\sss{(r)}}, Z_{k}^{\sss{(b)}})_{k>0}$  with distribution $F^\star$ for the offspring in the second and further generations, and with distribution $F$ for the offspring in the first generation.

  \end{corollary}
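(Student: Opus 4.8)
The plan is to deduce Corollary \ref{corr:couple} from Lemma \ref{lem:couple} by a direct substitution, checking that each of the three error terms on the right-hand side of \eqref{eq:couple-error-0tv} tends to zero under the hypothesis \eqref{eq:snmax}. First I would set $s_n := |\CC^{\sss{(r)}}_t \cup \CC^{\sss{(b)}}_t|$, so that the forward degrees of all vertices explored up to time $t$ in both colored clusters are among the first $s_n$ forward degrees of the exploration process, and apply Lemma \ref{lem:couple} with this choice. The three terms to control are $s_n n^{-{\beta_n}\kappa}$, $C s_n^2 n^{{\beta_n}(1+\ve)-1}$, and $C n^{-(\tau-2-2\ve)/(\tau+\ve)} s_n^{2(\tau-1+\ve/2)/(\tau+\ve)}$.

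For the first term, $s_n \le n^{{\beta_n}(\kappa-\delta)}$ gives $s_n n^{-{\beta_n}\kappa} \le n^{-{\beta_n}\delta} \to 0$, using that ${\beta_n}(\log n)^\gamma \to\infty$ forces ${\beta_n}\log n\to\infty$ so that $n^{-{\beta_n}\delta}\to 0$. For the second term, $s_n \le n^{(1-{\beta_n}(1+\ve)-\delta)/2}$ gives $s_n^2 n^{{\beta_n}(1+\ve)-1} \le n^{-\delta}\to 0$. For the third term, $s_n \le n^{-(\tau-2-\ve-\delta)/(2(\tau-1+\ve/2))}$ — here one reads the negative exponent as describing how close $s_n$ can come to the bound implicit in Lemma \ref{lem:couple}; more precisely, raising $s_n$ to the power $2(\tau-1+\ve/2)/(\tau+\ve)$ and multiplying by $n^{-(\tau-2-2\ve)/(\tau+\ve)}$ yields an exponent of the form $-(\tau-2-2\ve)/(\tau+\ve) + (\tau-2-\ve-\delta)/(\tau+\ve)\cdot(\text{something})$, which I would arrange to be $\le -\delta'/(\tau+\ve)$ for some $\delta'>0$ by a small adjustment of the $\ve$'s, so that this term is $n^{-\Theta(1)}\to 0$ as well. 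Since the sum of three quantities each tending to zero tends to zero, the coupling of Lemma \ref{lem:couple} succeeds whp.

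It then remains to observe that, conditionally on the coupling succeeding, the structure $(\CC^{\sss{(r)}}_t, \CC^{\sss{(b)}}_t)$ is exactly that of two independent Galton--Watson trees truncated at generation $t$: the two explorations started from the distinct uniform vertices $v_r$ and $v_b$ use disjoint families of half-edges (as long as the clusters have not merged, which is part of what the coupling guarantees at this scale), the root degrees are independent copies of $D_n \toindis D$ (hence distribution $F$ in the limit, after composing with the $D_{n,k}^\star\neq D_k^\star$ error already absorbed above in the $F$-vs-$F_n$ comparison for the first generation — handled identically via Assumption \ref{assu:tv}), and each subsequent forward degree is an independent copy of $D_k^\star \sim F^\star$. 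So the generation sizes $(Z_k^{\sss{(r)}}, Z_k^{\sss{(b)}})_{k\le t}$ realize the claimed branching processes.

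The only genuine subtlety — and the step I expect to require the most care — is the bookkeeping on the exponents in the third error term: one must verify that the particular threshold $n^{-(\tau-2-\ve-\delta)/(2(\tau-1+\ve/2))}$ chosen for $s_n$ is indeed what makes $n^{-(\tau-2-2\ve)/(\tau+\ve)} s_n^{2(\tau-1+\ve/2)/(\tau+\ve)}$ a negative power of $n$, and that the $\ve$'s and $\delta$'s can be chosen compatibly (small enough that $\tau-2-2\ve>0$, etc.). Everything else is a routine "plug in and the exponent is negative" argument, together with the standard observation that a successful coupling of the forward-degree sequence transfers the branching-process law to the explored clusters. I would also note in passing that, because the three bounds in \eqref{eq:snmax} are exactly the three thresholds appearing (up to the slack $\delta$) in $\vr_n' < (\tau-2)\min\{{\beta_n}\kappa, (1-{\beta_n}(1+\ve))/2, (\tau-2-2\ve)/(2(\tau-1))\}$ from Definition \ref{def:limit-variables}, the corollary is precisely what justifies that definition: taking $t = t(n^{\vr_n'})$ keeps $|\CC^{\sss{(r)}}_t\cup\CC^{\sss{(b)}}_t|$ below all three thresholds, so the coupling to $(Z_k^{\sss{(r)}}, Z_k^{\sss{(b)}})$ is valid up to that stopping time, which is what $(\Yrn, \Ybn)$ is built from.
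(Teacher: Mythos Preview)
Your approach is correct and is essentially the same as the paper's, which does not give a separate proof but simply states that the corollary follows from Lemma~\ref{lem:couple} by choosing $s_n$ so that the right-hand side of \eqref{eq:couple-error-0tv} tends to zero. Your write-up is in fact more detailed than the paper's treatment; your caution about the third exponent is well placed, since the minus sign in the third threshold of \eqref{eq:snmax} is a typo (compare \eqref{eq:vrn} and Definition~\ref{def:limit-variables}, where the corresponding exponent is positive), and once that is fixed the substitution you describe goes through after harmonising the $\ve$'s exactly as you indicate.
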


\begin{proof}[Proof of Lemma \ref{lem:couple}]

We would like to couple the forward degrees $(X_k^{\sss{(n)}})_{k\le s_n}$ in the exploration to an i.i.d. sample of size $s_n$ from distribution $D_{n}^\star$ as in \eqref{eq:size-biased2} as well as to $D^\star$ and estimate the coupling error.
The idea of the coupling is to achieve size-biased sampling with and without replacement of the vertices at the same time: this is \cite[Construction 4.2]{BHH14} that we informally recall here. Let us write $\CL_n$ for the list of half-edges.

We use a sequence of uniform random variables $U_k\in[0,1]$.
Then, we sample a uniform half-edge $h_k$ from $\CL_n$, namely we set $h_k$ to be the $j$th element of $\CL_n$ if $U_k\in((j-1)/\ell_n, j/\ell_n]$.
We sample the i.i.d.\ $D_{n,k}^\star$ by setting $D_{n,k}^\star$ to be $d_{v(h_k)}-1$, where $v(h)$ denotes the vertex that $h$ is incident to.

At the same time, we keep a list of already sampled vertices $\CS_k:=\{v_r, v_b, v(h_1),\dots, v(h_k)\}$. As long as $v(h_{k}) \notin \CS_{k-1}$, we can set $B_k^{\sss{(n)}}:=d_{v(h_k)}-1$, this quantity describes the number of brother half-edges of a newly discovered vertex via a pairing. Note that in this sampling  procedure there is no pairing of the half-edges yet. To ensure that the exploration cluster is a tree, at each step $k$ we yet have to check if any of these $B_k^{\sss{(n)}}=d_{v(h_k)}-1$ half-edges create cycles when being paired, i.e., they shall be paired to vertices in $\CS_{k-1}$. We write $X_k^{\sss{(n)}}$ to be $B_k^{\sss{(n)}}$ minus those half-edges that shall be paired to vertices in $\CS_{k-1}$.

Thus, the coupling to a BP tree with offspring distribution $D_n^\star$ can fail in two ways: either $v(h_{k}) \in \CS_{k-1}$ and the coupling between $B_k^{\sss{(n)}}, D_{n,k}^\star$ fails (depletion-of-points effect), or $X_k^{\sss{(n)}}<B_k^{\sss{(n)}}$ and some of the $B_k^{\sss{(n)}}$ half-edges create cycles (cycle-creation effect).

Introducing the $\sigma$-algebra $\CG_k$ generated by $v_r, v_b$ and the first $k$ draws, \cite[Lemma 4.3]{BHH14} bounds the coupling error between $B_k^{\sss{(n)}}, D_{n,k}^\star$: \be\label{eq:deplete} \Pv(B_k^{\sss{(n)}} \neq D_{n,k}^\star \mid \CG_{k-1}) \le \frac{1}{\ell_n} \left(d_{v_r} + d_{v_b} + \sum_{s=1}^{k-1} (B_s^{\sss{(n)}}+1)\right), \ee
while \cite[Lemma 4.3]{BHH14}  estimates the probability of creating a cycle at step $k$:
\be\label{eq:cycle}\Pv(X_k^{\sss{(n)}} < B_{k}^{\sss{(n)}} \mid \CG_{k-1}) \le \frac{B_k^{\sss{(n)}}}{\ell_n-1-(d_{v_r} + d_{v_b} + \sum_{s=1}^k (B_s^{\sss{(n)}}+1))} \left(d_{v_r} + d_{v_b} + \sum_{s=1}^k (B_s^{\sss{(n)}}+1)\right). \ee
Under Assumption \ref{assu:degree-dist} the maximal degree is at most  $n^{{\beta_n}(1+\ve)}$ in the graph, and $\ell_n=\Ev[D_n]n$ is of order $n$.
Combining these observations,  \eqref{eq:deplete} is at most $C k n^{{\beta_n}(1+\ve)-1}$.
Summing this bound over $k\le s_n$, we obtain that
\be\label{eq:couple-error-1} \Pv( \exists k\le s_n, B_k^{\sss{(n)}} \neq D_{n,k}^\star ) \le C s_n^2 n^{{\beta_n}(1+\ve)-1}. \ee
Then, on the event $\{\forall k\le s_n: B_k^{\sss{(n)}} = D_{n,k}^\star\}$, $B_s^{(n)}$ in \eqref{eq:cycle} can be replaced by the i.i.d.\ $D_{n,s}^{\star}$. Taking expectations of the rhs  \eqref{eq:cycle} does not work, since $\Ev[D_n^\star]$ is infinite. Thus we apply a truncation argument. Take $\ve>0$ so that $s_n^{(\tau-1+\ve)/(\tau-2)}=o(n)$. Then the denominator on the rhs of \eqref{eq:cycle} is at least $cn$ for some $c>0$. Further, for some truncation value $K_n$ to be chosen later that satisfies that $s_nK_n=o(n)$, 
\be\label{eq:couple-error-1}\ba \Pv( \exists k\le s_n, X_k^{\sss{(n)}} \neq B_{k}^{\sss{(n)}} \mid \forall k \le s_n, B_{k}^{\sss{(n)}} &=D_{n,k}^\star ) \le  \Pv(\exists j\le s_n, D_{n,j}^\star > K_n) \\&\ + \frac{1}{c n}\sum_{k=1}^{s_n}\sum_{s=1}^{k}\Ev\left[D_{n,k}^\star D_{n,s}^\star \ind_{\{\forall j\le s_n: D_{n,j}^\star \le K_n\}}\right]. \ea\ee
where we used that on the event that $\{ \forall k \le s_n: B_{k}^{\sss{(n)}} =D_{n,k}^\star,  D_{n,k}^\star \le K_n, \}$ the denominator on the rhs of \eqref{eq:cycle}  is at least $cn$ for some $c>0$. 
 Using \eqref{eq:size-biased2}, the first term on the rhs is at most $s_n L_n^\star(K_n )K_n^{-(\tau-2)}\le s_n K_n^{-(\tau-2)+\ve}$ for arbitrarily small $\ve>0$ for sufficiently large $n$, while the second term is at most $C n^{-1} K_n^2 s_n^2$. Making the order of the two error terms to be equal yields that the best choice of truncation value is at
 \be\label{eq:Kn} K_n:=(n/s_n)^{1/(\tau+\ve)}. \ee Note that the initial criterium that $s_nK_n=o(n)$ is satisfied for all $s_n=o(n)$, while $K_n$ is a polynomial of $n$ 
with strictly positive exponent whenever   $ s_n=o(n^{(1-\ve)/(\tau-1)})$. With $K_n$ in 
\eqref{eq:Kn}, the sum of the error terms in \eqref{eq:couple-error-1} becomes
\be \Pv( \exists k\le s_n, X_k^{\sss{(n)}} \neq B_{k}^{\sss{(n)}} \mid \forall k \le s_n, B_{k}^{\sss{(n)}} =D_{n,k}^\star )  \le C n^{- (\tau-2-2\ve)/(\tau+\ve)} s_n^{2(\tau-1+\ve/2)/(\tau+\ve)}\ee
which tends to zero as long as $s_n=o(n^{(\tau-2-\ve)/2(\tau-1+\ve/2)})$ for some arbitrarily small $\ve>0$.

Note that we have neglected coupling the forward degree of $v_r, v_b$ to two i.i.d. copies of $D_n$. This coupling can be done in a very similar way, by choosing with and without replacement two uniform numbers from $[n]$, with a coupling error at most $1/n$, which is negligible compared to the rhs of \eqref{eq:couple-error-1}. The probability that any of the $d_{v_r}$ or $d_{v_b}$ half-edges form cycles is at most of order $n^{{\beta_n}(1+\ve)}/\ell_n$, which again can be merged into the rhs of \eqref{eq:couple-error-1}.
This finishes the proof of \eqref{eq:couple-error-0}.

Next we extend the coupling between $(X_k^{\sss{(n)}}, D_{n,k}^\star)$ to additionally couple  $D^\star_k$ to them, using Assumption \ref{assu:tv}.
On the event that $X_k^{\sss{(n)}}= D_{n,k}^\star=\ell$, we use the optimal coupling that realizes the total variation distance between $D_n^\star$ and $D^\star$. Namely,
\be\label{eq:couple-def} \Pv(D^\star_k=\ell \mid X_k^{\sss{(n)}}= D_{n,k}^\star=\ell ):= \frac{ \min\{ \Pv(D^\star=\ell), \Pv(D_n^\star=\ell)\}}{\Pv(D_{n}^\star=\ell)}=\min\left\{1, \frac{\Pv(D^\star=\ell)}{\Pv(D_n^\star=\ell)}  \right\}.
\ee
One can set the other possible values of $D_k^\star$ as described e.g. in \cite[Chapter 1]{LevPerWil09}.
Nevertheless, the coupling error equals
\[ \Pv(D^\star_k\neq D_{n,k}^\star)=\sum_{\ell\ge1}(1-\min\{ \Pv(D^\star=\ell), \Pv(D_n^\star=\ell)\}) = d_{\sss{\mathrm{TV}}}(F^\star, F_n^\star).  \]
One can realize this coupling by using another independent uniform variable $\wit U_k$ to set the value of $D_k^\star$ once the value of $D_{n,k}^\star$ is determined. Thus, we obtain that
\be\label{eq:couple-error-2} \Pv(\exists k\le s_n, D_{n,k}^\star\neq D_{k}^\star)\le d_{\sss{\mathrm{TV}}}(F_n^\star, F^\star) \le s_n n^{-\kappa}.\ee
Similarly, we can couple $d_{v_r}, d_{v_b}$ to two i.i.d. copies of $D$ with a coupling error at most $2d_{\sss{\mathrm{TV}}}(F_n, F)$.
This finishes the proof of \eqref{eq:couple-error-0tv}.
\end{proof}

A theorem by Davies \cite{D78} describes the growth rate of a branching process with a given offspring distribution $G$ that satisfies \eqref{eq:size-biased2} that we describe here informally.  Let $\wit Z_k$ denote the $k$-th generation of a branching process with offspring distribution given by a distribution function $G$, that can be written in the form\footnote{Actually the theorem by Davies in \cite{D78} is somewhat more general, since it allows for a slightly larger class of slowly-varying functions than the criterion in \eqref{eq:L}. Nevertheless, the theorem applies for the case described in \eqref{eq:size-biased2} and \eqref{eq:L}.} as in \eqref{eq:size-biased2}  and \eqref{eq:L} for some $\tau\in(2,3)$ and some $x_0>0$ for all $x\ge x_0$.
Then $(\tau-2)^{k}\log(\wit{Z}_{k}\vee 1)$ converges almost surely to a random variable $\wit Y$. Further, the variable $\wit Y$ has exponential tails: if $J(x):=\Pv(\wit Y\le x)$, then
\be\label{eq:exp-tails-Y}   \lim_{x\to \infty} \frac{- \log (1-J(x))}{x} =1.\ee

We can apply Davies' theorem to each subtree of the two roots of the two i.i.d. BPs from Corollary \ref{corr:couple}, to obtain that the corresponding convergence in \eqref{def:Y} in Definition \ref{def:limit-variables}. See \cite[Lemma 2.4]{BarHofKom14} for more details.

Recall from Corollary \ref{corr:couple} that the coupling of the forward degrees in the BP and in the exploration fails when the total size of the BPs is too large, i.e., when \eqref{eq:snmax} is not satisfied.
Thus let us set\footnote{The only reason to denote this quantity by $\vr_n'$ is to be consistent with the notation in \cite{BarHofKom14}.} 
\be\label{eq:vrn} \vr_n':=(\tau-2)\min\{ {\beta_n}\kappa, (1-{\beta_n}(1+\ve))/2, (\tau-2-\ve)/(2(\tau-1+\ve/2))\}.\ee 
Without loss of generality we assume that the cluster of $v_r$ reaches size $n^{\vr_n'}$ first (otherwise we switch the indices $r,b$). Thus, let us define
\be\label{eq:stop} t(n^{\vr_n'})=\inf \{k:  Z_k^{\sss{(r)}}  \ge n^{\vr_n'} \}.\ee
From the definition \eqref{def:yrn-ybn}, an elementary rearrangement yields that (conditioned on $\Yrn$),
\be\label{eq:an2} t(n^{\vr_n'}) = \frac{\log(\vr_n'/\Yrn) + \log\log n}{|\log(\tau-2)|}+ 1-a_n^{\sss{(r)}},\ee
where
 \be\label{eq:an} a_n^{\sss{(r)}}=  \left\{ \frac{\log(\vr_n'/\Yrn) + \log\log n}{|\log(\tau-2)|}\right\}.  \ee
Note that $1-a_n^{\sss{(r)}}$ in \eqref{eq:an2} is there to make $t(n^{\vr_n'})$  equal to the upper integer part of the fraction on the rhs of \eqref{eq:an2}. Due to this effect, the last generation has a bit more vertices than $n^{\vr_n'}$, namely
\be \label{eq:mr} Z_{t(n^{\vr_n'})}^{\sss{(r)}}= n^{ \vr_n' (\tau-2)^{a_n^{\sss{(r)}}-1} }=: m_r,\ee
We obtain this expression by rearranging  \eqref{def:yrn-ybn} and using the value $t(n^{\vr_n'})$ from \eqref{eq:an2}.
 The definition of $\vr_n'$ in \eqref{eq:vrn} and $a_n^{\sss{(r)}} \in [0,1)$ implies that the exponent $\vr_n' (\tau-2)^{a_n^{\sss{(r)}}-1}$ is still so small that the condition \eqref{eq:snmax} in Corollary \ref{corr:couple} is satisfied.
Similarly, from \eqref{def:yrn-ybn} and \eqref{eq:an2}, the blue cluster at this moment has size
\be\label{eq:mb}  Z_{t(n^{\vr'})}^{\sss{(b)}}=n^{\vr_n' (\tau-2)^{a_n^{\sss{(r)}}-1} \Ybn/\Yrn}=:m_b,\ee
where the assumption that red reaches size $n^{\vr_n'}$ first is equivalent to the assumption that $\Ybn/\Yrn\le 1$. This assumption together with \eqref{eq:mr} as well as the double-exponential growth apparent from \eqref{def:Y} ensures that the total size of the two BPs is less than $n^{\vr_n}$ and thus Corolllary \ref{corr:couple} still holds. Note that $m_r, m_b$ are \emph{random variables} that are measurable wrt $\CF_{\vr_n'}$.


\subsection{Short path to the hubs through shells}\label{s:shells}
To provide an upper bound on the distance of $v_q$ and the hubs for $q\in\{r,b\}$, as well as to show \eqref{eq:degree-hub}, we build a path from $\CC^{\sss{(q)}}_{t(n^{\vr_n'})}$ to the hubs.
Let us set $C_2:=\max\{C_1, C^\star_1\}$, where $C_1, C^\star_1$ are the constants in the exponent in \eqref{eq:L} for $L_n, L_n^\star$, respectively, and  define the function
\be\label{def:h} h(x):=\exp\left\{ \frac{2C_2}{(\tau-2)^\gamma} (\log x)^\gamma\right\}.\ee
We shall repeatedly use that for any possible $L_n, L_n^\star$ satisfying \eqref{eq:L}, as $x\to \infty$,
\begin{align}\label{eq:h-L1} \min\{ L_n(x^{1/(\tau-2)}), L_n^\star(x^{1/(\tau-2)})\}h(x)&\to \infty, \\
\max\{ L_n(x^{1/(\tau-2)}), L_n^\star(x^{1/(\tau-2)})\}/h(x)&\to 0.\label{eq:h-L2}\end{align}
Recall $m_q$ from \eqref{eq:mr}, \eqref{eq:mb} and that they are measurable wrt $\CF_{\vr_n'}$.  Generally in the rest of the paper, random variables measurable wrt $\CF_{\vr_n'}$ are denoted by small letters, since they can be treated as constants under the measure $\Pv_Y$ in \eqref{eq:cond-measure}, and this is the meaure we mostly work with. In order to build the path, for both $q\in\{r,b\}$ we decompose the high-degree vertices in the graph into the following sets, that we call \emph{shells}:
\be\label{def:Gamma_i} \Gamma_i^{\sss{(q)}}:=\{ v: d_v>\uiq \}, \ee
where $u_i^{\sss{(q)}}$ is defined recursively by
\be\label{eq:ui_recursion} u_{i+1}^{\sss{(q)}} =\left(\frac{\uiq}{h(\uiq)}\right)^{1/(\tau-2)}, \ \ \qquad u_0^{\sss{(q)}}:= \bigg(\frac{m_q}{h(m_q)}\bigg)^{1/(\tau-2)}. \ee
Setting $u_{-1}^{\sss{(q)}}:=m_q$, we obtain by iteration
\[ \uiq= m_q^{(\tau-2)^{-(i+1)}} /\prod_{k=1}^{i+1} h(u_{i-k}^{\sss{(q)}})^{(\tau-2)^{-k}}.\]
Clearly, $\uiq\le m_q^{(\tau-2)^{-(i+1)}}$. Using this upper bound to estimate the arguments of the function $h$ in the denominator, as well as \eqref{def:h}, the  following lower bound holds with $K_\gamma=1-(\tau-2)^{-(1-\gamma)}$:
\be\label{eq:uiq-lower} \uiq\ge m_q^{ (\tau-2)^{-(i+1)}} \exp\left\{ -\frac{2C_2 }{K_\gamma (\tau-2)^\gamma} (\tau-2)^{-(i+1)} (\log m_q)^\gamma \right\}.
\ee
Since $m_q$ tends to infinity with $n$ (see \eqref{eq:mr}, \eqref{eq:mb}) and $\gamma<1$, the second factor is of smaller order than the first factor for all sufficiently large $n$. This observation together with the upper bound yields that for any fixed $i$,
\be\label{eq:uiq-asymp} \uiq  \sim m_q^{ (\tau-2)^{-(i+1)}}, \ee
in the sense of \eqref{def:sim}. Note that $(\tau-2)^{-1}>1$, thus $\uiq$ is growing and $\Gamma_i^{\sss{(q)}} \supset \Gamma_{i+1}^{\sss{(q)}}$.

To show that the initial stage (coupling to BPs) and the paths through shells has nonzero intersection, we will  use the following claim:
\begin{claim}\label{lem:maxdegree}
Let $X_i, \ i=1, \dots, m$ be i.i.d.\  random variables from distribution $F_n^\star$ or $F^\star$.
Then
\be\label{eq:logwhp} \Pv\bigg(\max_{i\in [m]} X_i < \Big(\frac{ m}{h(m)}\Big)^{1/(\tau-2)} \bigg) \le \exp\left\{ - \exp\big\{ \frac{C^\star_1}{(\tau-2)^\gamma} (\log m)^\gamma \big\}\right\} \to 0. \ee

\end{claim}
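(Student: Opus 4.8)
The plan is to prove the claim by a routine first‑moment estimate on the maximum of i.i.d.\ heavy‑tailed variables, exploiting that the threshold $y:=(m/h(m))^{1/(\tau-2)}$ is chosen precisely so that $m\cdot(1-F_n^\star(y))=L_n^\star(y)\,h(m)$, i.e.\ the tail collapses to something governed by $h(m)$ alone.

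First I would write, with $F$ denoting whichever of $F_n^\star,F^\star$ the $X_i$ follow, using independence and $1-u\le\e^{-u}$,
\[\Pv\Big(\max_{i\in[m]}X_i<y\Big)=\Pv(X_1<y)^m\le F(y)^m\le\exp\{-m(1-F(y))\}.\]
By the power‑law form \eqref{eq:size-biased2} (valid for $F^\star$ at all large arguments, and for $F_n^\star$ on the range $x\le n^{{\beta_n}(1-\ve)}$, which is where the claim is applied — e.g.\ with $m=m_q$ as in \eqref{eq:mr}--\eqref{eq:mb} and $y=u_0^{\sss{(q)}}$ as in \eqref{eq:ui_recursion}, a small polynomial power of $n$), and by the very definition of $y$, one has $1-F_n^\star(y)=L_n^\star(y)/y^{\tau-2}=L_n^\star(y)\,h(m)/m$ (and likewise with $L^\star$), so that $m(1-F(y))=L_n^\star(y)\,h(m)$. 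Thus it remains to show $L_n^\star(y)\,h(m)\ge\exp\{C_1^\star(\log m)^\gamma/(\tau-2)^\gamma\}$.

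Next, since $h(m)\ge 1$ we have $y\le m^{1/(\tau-2)}$, hence $\log y\le(\log m)/(\tau-2)$; plugging this into the lower bound in \eqref{eq:L} for $L_n^\star$ — where we may freely take the exponent to be $\gamma$, since enlarging the exponent only weakens \eqref{eq:L}, and $\tau-2<1$ makes $1/(\tau-2)^\gamma$ the larger coefficient — yields $L_n^\star(y)\ge\exp\{-C_1^\star(\log m)^\gamma/(\tau-2)^\gamma\}$. Multiplying by $h(m)=\exp\{2C_2(\log m)^\gamma/(\tau-2)^\gamma\}$ from \eqref{def:h} and using $C_2=\max\{C_1,C_1^\star\}\ge C_1^\star$, whence $2C_2-C_1^\star\ge C_1^\star$, gives $L_n^\star(y)\,h(m)\ge\exp\{(2C_2-C_1^\star)(\log m)^\gamma/(\tau-2)^\gamma\}\ge\exp\{C_1^\star(\log m)^\gamma/(\tau-2)^\gamma\}$. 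Combined with the first display this is exactly \eqref{eq:logwhp}, and the right‑hand side tends to $0$ because $(\log m)^\gamma\to\infty$.

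There is no serious obstacle here; the only points needing care are keeping the inequalities in the right direction when passing between $1-F(y)$, $\Pv(X_1\ge y)$ and $L_n^\star(y)/y^{\tau-2}$, and the bookkeeping of the exponents $\eta$ versus $\gamma$ in \eqref{eq:L} together with the constants $C_1^\star$ and $C_2$. In fact the whole point of the slack built into $h$ (the factor $2$ and the use of $C_2$ rather than $C_1^\star$) is exactly that it makes the final constant come out as the clean $C_1^\star/(\tau-2)^\gamma$ claimed, so no delicate optimisation is required.
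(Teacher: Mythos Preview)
Your proof is correct and follows essentially the same approach as the paper: write $\Pv(\max X_i<y)\le F(y)^m\le\exp\{-m(1-F(y))\}$, then use $y^{\tau-2}=m/h(m)$ to reduce $m(1-F(y))$ to $L_n^\star(y)\,h(m)$ and invoke the slack built into $h$. The paper's own proof is in fact terser than yours --- it writes only the first display and then says ``the rest follows by using the function $h$ \dots\ in particular the relation \eqref{eq:h-L1}'' --- whereas you carry out the constant bookkeeping $2C_2-C_1^\star\ge C_1^\star$ explicitly, which is exactly what is needed to land on the precise bound $\exp\{C_1^\star(\log m)^\gamma/(\tau-2)^\gamma\}$ in the statement.
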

\begin{proof}We show it for $F_n^\star$. The proof for $F^\star$ is identical.
Clearly
\[ \ba\Pv\bigg(\max_{i\in [m]} X_i < \Big(\frac{ m}{h(m)}\Big)^{1/(\tau-2)} \bigg)&= F^\star_n\left( \big(\tfrac{m}{h(m)}\big)^{1/(\tau-2)}\right)^{m} \\
&\le \exp\left\{ m\left(1-F_n^\star\big((\tfrac{m}{h(m)})^{1/(\tau-2)} \big)\right)\right\}, \ea\]
and the rest follows by using the function $h$ from \eqref{def:h} as well as the form of $F_n^\star$ from \eqref{eq:size-biased2}, in particular the relation in \eqref{eq:h-L1}.\end{proof}
\begin{proof}[Proof of Proposition \ref{prop:hubs}, upper bound]
By the coupling established in Corollary \ref{corr:couple}, conditioned on the size of the last generation that we denote by $m_q$, the degrees in the last generation of the two BPs are an i.i.d.  (either from $F_n^\star$ or from $F^\star$). Claim \ref{lem:maxdegree}, applied conditionally on $m_q$,  ensures that whp there are vertices with degree at least $u_0^{\sss{(q)}}=(m_q/h(m_q))^{1/\tau-2}$ in the last generation of the two BPs, establishing that $\Pv_{\sss{Y}}$-whp
\be\label{eq:pathstart}\CC_{t(n^{\vr_n'})}^{\sss{(q)}}\cap \Gamma_0^{\sss{(q)}} \neq \emptyset.\ee
The next step is to show that $\Pv_{\sss{Y}}$-whp for all $i$ such that $\Gamma_{i+1}^{\sss{(q)}}\neq \varnothing$,
\be\label{eq:gamma-i-connectivity}  \Gamma_i^{\sss{(q)}} \subset N(\Gamma_{i+1}^{\sss{(q)}}), \ee
where  $N(S)$ stands for the set of vertices that are neighbors of $S$.
\begin{figure}\label{fig:mountain}
\includegraphics[width=0.5\textwidth]{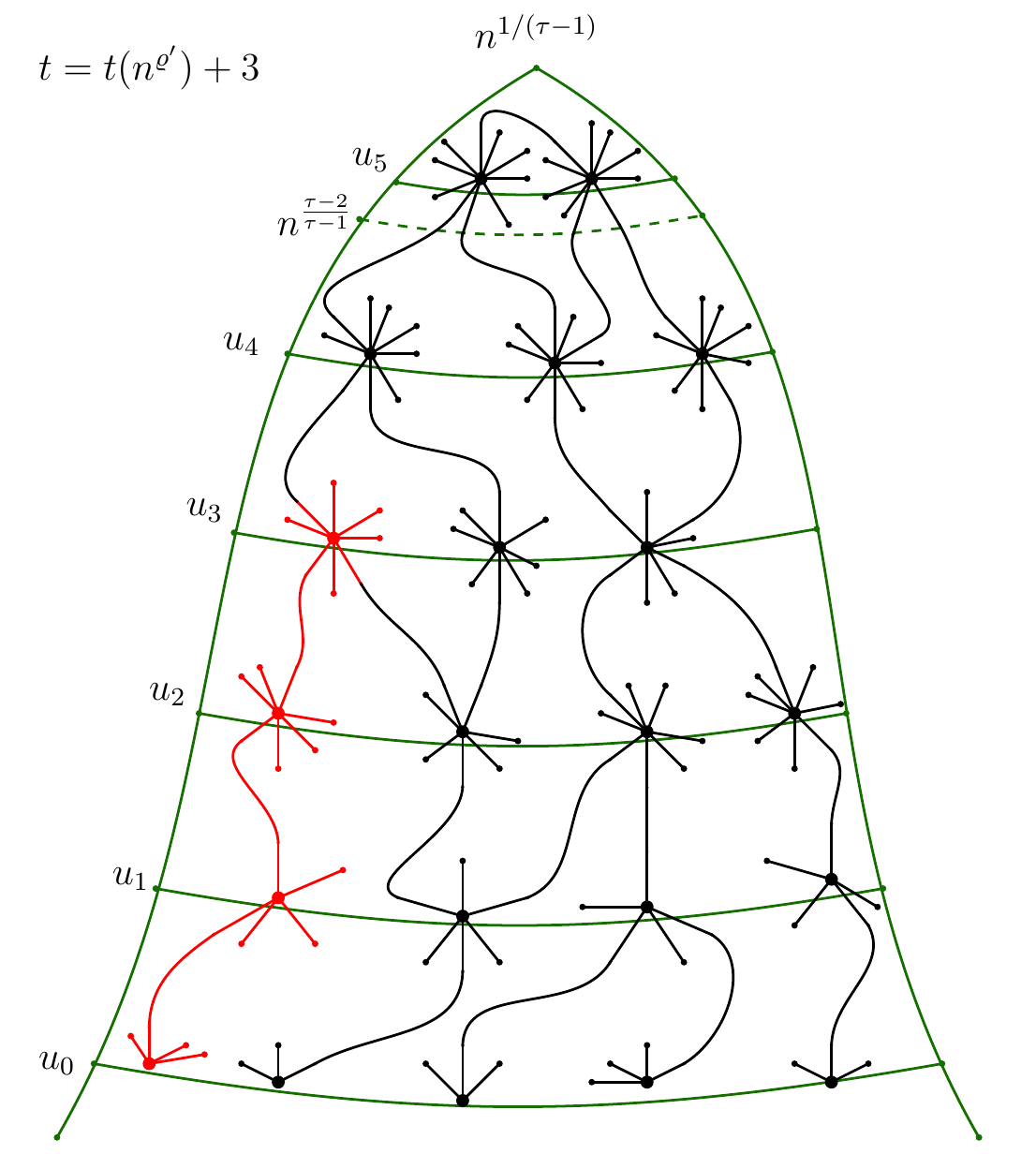}
\caption{An illustration of the layers and the mountain climbing phase at time $t(n^{\vr})+3$. Disclaimer: the degrees on the picture are only an illustration.}
\end{figure}
This statement can be obtained by a modification of \cite[Lemma 3.4]{BarHofKom14} that we provide here for the reader's convenience. Recall that $H(\CA)$ denotes the number of half-edges attached to vertices in a set $\CA$.

The algorithm to generate the configuration model makes it possible that when checking the connection of a vertex $v\in \Gamma_i^{\sss{(q)}}$, we can start by pairing the half-edges of $v$ one after another.  Given $H(\Gamma_{i+1}^{\sss{(q)}})$, the probability that a half-edge is not connected to any of the half-edges attached to vertices in $\Gamma_{i+1}^{\sss{(q)}}$ is at most $1-H(\Gamma_{i+1}^{\sss{(q)}})/\ell_n$. Since we can pair at least $\uiq/2$ half-edges before all the half-edges of $v$ are paired\footnote{This is since in the worst case scenario the first $\uiq/2$ half-edges are paired all back to half-edges of the same vertex $v$.}, by a union bound for all $v\in H(\Gamma_{i}^{\sss{(q)}})$, 
\be\ba\label{eq:no-connection}
\Pv_{\sss{Y}}\big(\exists v\in \Gamma_i^{\sss{(q)}}: v \not \leftrightarrow \Gamma_{i+1}^{\sss{(q)}}\mid H(\Gamma_{i+1}^{\sss{(q)}} ) \big) &\le |\Gamma_i^{\sss{(q)}}| \Big(1-\frac{H(\Gamma_{i+1}^{\sss{(q)}})}{\ell_n} \Big)^{\uiq/2}\\
&\le n \exp\!\left\{  -C\uiq \uiqp (1-F_n(\uiqp)) \right\}\!.
\ea \ee
By using the lower bound in \eqref{eq:L} as well as the upper bound $\uiqp\le (\uiq)^{1/(\tau-2)}$, (see also \eqref{eq:h-L1}),
\[ \uiq \uiqp (1-F_n(\uiqp))= h(\uiq)L_n(\uiqp)\ge \exp\left\{ \wit C (\tau-2)^{-i} (\log m_q)^\gamma\right\},  \]
for some $\wit C>0 $. Since $m_q$ (\eqref{eq:mr}, \eqref{eq:mb}) tends to infinity with $n$, the bound in \eqref{eq:no-connection} tends to zero as $n\to \infty$ even when we sum over $i\ge 1$.
 This establishes \eqref{eq:gamma-i-connectivity}.

 The coupling to two BPs combined with \eqref{eq:pathstart} and \eqref{eq:gamma-i-connectivity} establishes the existence of a path to the hubs. This provides an \emph{upper bound} on the distance between $v_r, v_b$ and the hubs.
It remains to calculate the length of these paths.

We write $i_{\star\sss{(q)}}$ for the last index when $\Gamma_{i}^{\sss{(q)}}$ is nonempty, i.e., by \eqref{eq:F},
\be\label{def:i*}i_{\star\sss{(q)}}:=\sup \{ i: u_i^{\sss{(q)}} \le n^{{\beta_n}} \}. \ee
Some calculation using the value of $m_q$ from \eqref{eq:mr}, combined with \eqref{eq:uiq-asymp} and \eqref{eq:uiq-lower} shows that
\be\label{eq:value_i*-al} i_{\star \sss{(r)}}= -1+ \frac{\log ({\beta_n}/(\vr_n' (\tau-2)^{a_n^{\sss{(r)}}-1}))}{|\log(\tau-2)|}-\bnr({\beta_n}) + o_{\Pv_{\sss{Y}}}(1), \ee
where $\bnr({\beta_n})$ is the fractional part of the previous term on the rhs.
Using the value of $a_n ^{\sss{(q)}}$ from \eqref{eq:an}, plus the fact that $\{x - 1+\{y\}\}=\{x+y\}$,  we get that
\be\label{eq:bnr} \bnr({\beta_n}) = \left\{ \frac{\log ({\beta_n}/\vr_n')}{|\log(\tau-2)|}+ a_n^{\sss{(r)}}-1\right\} = \left\{ \frac{\log ({\beta_n}/\Yrn)+\log \log n}{|\log(\tau-2)|}\right\},\ee
exactly as defined in \eqref{def:ti-bi}. Similar calculation for $q=b$ yields
\be\label{eq:istar-b} i_{\star \sss{(b)}} =-1+\frac{\log ({\beta_n}/ \vr_n' (\tau-2)^{a_n^{\sss{(r)}}-1})) + \log (\Yrn/\Ybn)}{|\log(\tau-2)|}-\bnb({\beta_n})+o_{\Pv_{\sss{Y}}}(1),\ee
where $\bnb({\beta_n})$ is the fractional part of the previous term on the rhs. Again, some calculation yields that $\bnb({\beta_n})$ is exactly as in \eqref{def:ti-bi}. Note that the definition of $\vr_n'$ in \eqref{eq:vrn} guarantees that the ratio ${\beta_n}/\vr_n'$ is bounded, and hence $i_{\star\sss{(q)}}$ is a tight random variable (measurable wrt $\CF_{\vr_n'}$).
From \eqref{eq:uiq-asymp} and \eqref{eq:value_i*-al} and \eqref{eq:istar-b} respectively, one can calculate that
\be\label{eq:ui*}  u_{i_{\star\sss{(q)}}}^{\sss{(q)}}\sim n^{{\beta_n}(\tau-2)^{\bnq({\beta_n})}}, \ee
and the error factor as in \eqref{eq:uiq-lower} is $o_{\Pv_{\sss{Y}}}(1)$, since $i_{\star\sss{(q)}}$ does not tend to infinity with $n$. Thus, the total length of the constructed path is, for $q\in\{r,b\}$,
\be\label{eq:k*+i*}  T_q({\beta_n})=t(n^{\vr_n'})+i_{\star\sss{(q)}}=\frac{\log\log (n^{{\beta_n}})-\log Y_n^{\sss{(q)}}}{|\log(\tau-2)|} -1-\bnq({\beta_n}),\ee
establishing the upper bound on $\mathrm d_G(v_q, \mathrm{hubs})$ in Proposition \ref{prop:hubs}.
Note that $T_q({\beta_n})$ only depends  on the value $\vr_n'$ through the approximating variables $Y_n^{\sss{(q)}}$, and $\bnq({\beta_n})$ is exactly the fractional part of the expression on the rhs of $T_q({\beta_n})$. Since also $Y_n^{\sss{(q)}}\toindis Y_q$ irrespective of the choice of $\vr_n'$, this establishes that the choice of $\vr_n'$ is not relevant in the proof (at least not in the limit), but more a technical necessity.
\end{proof}
\subsection{Upper bound on the degrees in the BFS}\label{s:badpaths}
Now we turn towards providing a  \emph{matching lower bound} for the distance from the hubs.
Similarly as in \eqref{eq:ui_recursion}, let us define, for $q\in\{r,b\}$,
\be\ba\label{eq:uibar}
   \widehat u_0^{\sss(q)}&:= (m_q h(m_q))^{1/(\tau-2)}, \\
  \widehat u_{i+1}^{\sss{(q)}}&:= \big(\widehat u_i^{\sss{(q)}}h(\huiq)\big)^{1/(\tau-2)},  \\
\widehat\Gamma_i^{\sss{(q)}}&:=\{v \in \CMD: d_v \ge \widehat u_i^{\sss{(q)}}\},
\ea\ee
Note that $\huiq$ grows faster than $\uiq$ since here we multiply by $h$ instead of dividing by it.

The next lemma handles the upper bound on the maximal-degree vertex reached at any time $t(n^{\vr_n'})+i$, but first some definitions. We say that a sequence of vertices and half-edges $\underline \pi:=(\pi_0, s_0, t_1, \pi_1, s_1, t_2,  \dots,  t_k, \pi_k)$ forms a \emph{path} in $\CMD$, if for all $0< i\le k$, the half-edges $s_i, t_i$ are incident to the vertex $\pi_i$ and $(s_{i-1}, t_i)$ forms an edge between $\pi_{i-1},\pi_i$.

For $q\in\{r,b\}$, we say that a path is \emph{q-good} if $\deg(\pi_i)\le\widehat u_i^{\sss{(q)}}$ holds for every $i$. Otherwise we call it \emph{$q$-bad}. We further decompose the set of $q$-bad paths in terms of where they `turn' $q$-bad:
\be\label{def:badpaths} \ba   Bad\CP_k^{\sss{(q)}} := &\{ (\pi_0, s_0, t_1, \pi_1, s_1 \dots, t_k, \pi_k) \text{ is a path, } \\
 &\quad \pi_0\!\in\!\CC^{\sss{(q)}}_{ t(n^{\vr_n'}) },\   \deg(\pi_i)\!\le\! \widehat u_i^{\sss{(q)}} \ \forall i\le  k-1,\ \deg(\pi_k)\!>\!\widehat u_k^{\sss{(q)}}   \}.\ea\ee
The following lemma shows that $q$-bad paths $\Pv_{\sss{Y}}$-whp do not occur:
\begin{lemma}\label{lem:badpaths}
For some constant $C>0$, the following bound on the probability of having any bad paths holds for color $q\in\{r,b\}$:
\be\label{eq:badpath} \Pv_{\sss{Y}}( \exists k\in [0, i_{\star\sss{(q)}}]:  Bad\CP_k^{\sss{(q)}} \neq \varnothing) \le C \exp\{-C (\log m_q)^{\gamma}\}.\ee
\end{lemma}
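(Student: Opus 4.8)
The plan is a first-moment argument over the bad paths. First I would write
\[
\Pv_{\sss{Y}}\big(\exists k\in[0,i_{\star\sss{(q)}}]:Bad\CP_k^{\sss{(q)}}\neq\varnothing\big)\le\sum_{k=0}^{i_{\star\sss{(q)}}}\Ev_{\sss{Y}}\big[|Bad\CP_k^{\sss{(q)}}|\big],
\]
noting that by \eqref{eq:value_i*-al}--\eqref{eq:istar-b} the index $i_{\star\sss{(q)}}$ is a tight, $\CF_{\vr_n'}$-measurable random variable, so only boundedly many terms occur. To bound the expectation I would use that the configuration model may be built by pairing half-edges in any order: a bad path of length $k$ is generated by first choosing one of the $H(\CC^{\sss{(q)}}_{t(n^{\vr_n'})})$ still-free half-edges of the explored cluster, then revealing vertices $\pi_1,\dots,\pi_k$ and the half-edges along the path, each new pairing succeeding with conditional probability at most $1/(\ell_n-2k\widehat u_k^{\sss{(q)}})\le 2/\ell_n$ (here I use that for $k\le i_{\star\sss{(q)}}$ one has $\widehat u_k^{\sss{(q)}}\le n^{{\beta_n}(1+\ve)}$ and ${\beta_n}(1+\ve)<1$). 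Weighting each internal vertex by $d_v^2$ under the constraint $d_v\le\widehat u_i^{\sss{(q)}}$ and the last one by $d_v$ under $d_v>\widehat u_k^{\sss{(q)}}$ gives
\[
\Ev_{\sss{Y}}\big[|Bad\CP_k^{\sss{(q)}}|\big]\le H\big(\CC^{\sss{(q)}}_{t(n^{\vr_n'})}\big)\prod_{i=1}^{k-1}\Big(\tfrac{2}{\ell_n}\sum_{v:\,d_v\le\widehat u_i^{\sss{(q)}}}d_v^2\Big)\cdot\tfrac{2}{\ell_n}\sum_{v:\,d_v>\widehat u_k^{\sss{(q)}}}d_v .
\]

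Next I would insert Karamata-type estimates for the degree sums, valid by Assumption \ref{assu:degree-dist}: $\tfrac1{\ell_n}\sum_{v:d_v\le x}d_v^2\le C x^{3-\tau}L_n(x)$, and $\tfrac1{\ell_n}\sum_{v:d_v>x}d_v=1-F_n^\star(x)\le C x^{-(\tau-2)}L_n^\star(x)$, cf.\ \eqref{eq:size-biased2}. The heart of the argument is the telescoping that the $h$-corrections in \eqref{eq:uibar} were designed to produce: since $(\widehat u_{i+1}^{\sss{(q)}})^{\tau-2}=\widehat u_i^{\sss{(q)}}h(\widehat u_i^{\sss{(q)}})$ and, for large arguments, $\max\{L_n(x),L_n^\star(x)\}\le h(x)$ — which holds because $C_2=\max\{C_1,C_1^\star\}$ enters the exponent of $h$ and, after enlarging $\gamma$ if necessary, $\gamma>\eta$; compare \eqref{eq:h-L1}--\eqref{eq:h-L2} — one obtains
\[
\prod_{i=1}^{k-1}(\widehat u_i^{\sss{(q)}})^{3-\tau}L_n(\widehat u_i^{\sss{(q)}})\cdot(\widehat u_k^{\sss{(q)}})^{-(\tau-2)}L_n^\star(\widehat u_k^{\sss{(q)}})\le(\widehat u_1^{\sss{(q)}})^{-(\tau-2)}L_n^\star(\widehat u_k^{\sss{(q)}})=\frac{L_n^\star(\widehat u_k^{\sss{(q)}})}{(m_qh(m_q))^{1/(\tau-2)}\,h(\widehat u_0^{\sss{(q)}})},
\]
where the last equality uses $(\widehat u_1^{\sss{(q)}})^{\tau-2}=\widehat u_0^{\sss{(q)}}h(\widehat u_0^{\sss{(q)}})$ and $\widehat u_0^{\sss{(q)}}=(m_qh(m_q))^{1/(\tau-2)}$.

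Finally I would control $H:=H(\CC^{\sss{(q)}}_{t(n^{\vr_n'})})$. By the BFS-to-branching-process coupling of Corollary \ref{corr:couple}, conditionally on $\CF_{\vr_n'}$ this is whp a sum of $m_q$ i.i.d.\ copies of $D^\star$; since $D^\star$ has tail index $\tau-2\in(0,1)$ the mean is infinite, so I would truncate the summands at $\widehat u_0^{\sss{(q)}}$ — the maximum lies below this level whp by the union bound of Claim \ref{lem:maxdegree} together with \eqref{eq:h-L2} — and apply Markov's inequality to the truncated sum, whose conditional mean is $O\big(m_q(\widehat u_0^{\sss{(q)}})^{3-\tau}L^\star(\widehat u_0^{\sss{(q)}})\big)=O\big(m_q^{1/(\tau-2)}h(m_q)^{(3-\tau)/(\tau-2)}L^\star(\widehat u_0^{\sss{(q)}})\big)$ by Karamata. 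Hence, on a $\Pv$-high-probability event, $H\le m_q^{1/(\tau-2)}h(m_q)^{(3-\tau)/(\tau-2)}L^\star(\widehat u_0^{\sss{(q)}})\log m_q$. Substituting everything, the factor $m_q^{1/(\tau-2)}$ cancels, the powers of $h(m_q)$ combine to $h(m_q)^{-1}$, the quotient $L^\star(\widehat u_0^{\sss{(q)}})/h(\widehat u_0^{\sss{(q)}})$ tends to $0$ by \eqref{eq:h-L2}, and since $\log m_q=\Theta({\beta_n}\log n)$ while $L_n^\star(\widehat u_k^{\sss{(q)}})\le\exp\{C_1^\star({\beta_n}(1+\ve)\log n)^\eta\}$ with $\eta<\gamma$, one is left with $\Ev_{\sss{Y}}[|Bad\CP_k^{\sss{(q)}}|]\le C\exp\{-C(\log m_q)^\gamma\}$ uniformly over $k\le i_{\star\sss{(q)}}$; summing over the boundedly many $k$ gives \eqref{eq:badpath}. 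The main obstacle is precisely this bookkeeping of the many slowly-varying and $h$-factors: one must verify that the corrections built into $\widehat u_i^{\sss{(q)}}$ both make the degree-sum product telescope cleanly and leave an $\exp\{-\Theta((\log m_q)^\gamma)\}$ surplus, and that the heavy-tailed (infinite-mean) cluster degree $H$ is — via the truncation — small enough not to swallow this surplus.
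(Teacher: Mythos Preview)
Your strategy — union bound over $k$, first-moment path counting, telescoping via the recursion $(\widehat u_{i+1}^{\sss{(q)}})^{\tau-2}=\widehat u_i^{\sss{(q)}}h(\widehat u_i^{\sss{(q)}})$, and a truncation/Markov bound on $H(\CC_{t(n^{\vr_n'})}^{\sss{(q)}})$ — is exactly the paper's approach. Two steps in your execution, however, are too loose to deliver the stated bound.

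First, your telescoping replaces each $L_n^\star(\widehat u_i^{\sss{(q)}})$ by $h(\widehat u_i^{\sss{(q)}})$ and collapses the product to a single ratio, leaving a bare factor $L_n^\star(\widehat u_k^{\sss{(q)}})$ against only $h(m_q)^{-1}h(\widehat u_0^{\sss{(q)}})^{-1}$. Since $\log \widehat u_k^{\sss{(q)}}\asymp(\tau-2)^{-(k+1)}\log m_q$ while $\log \widehat u_0^{\sss{(q)}}\asymp(\tau-2)^{-1}\log m_q$, in the regime where the exponent of $L_n^\star$ equals that of $h$ (which is how the paper uses it; note that \eqref{eq:elln} writes $(\cdot)^\gamma$ for the $L_n^{\star,\text{up}}$ exponent), your bound becomes $\exp\{(C_1^\star(\tau-2)^{-(k+1)\gamma}-\tfrac{2C_2}{(\tau-2)^{2\gamma}})(\log m_q)^\gamma\}$, and for $k\ge 2$ the first constant can dominate. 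Since $i_{\star\sss{(q)}}$ is bounded only in terms of $\kappa,\tau,\ve$ (see \eqref{eq:value_i*-al}) and can exceed $1$, the bound fails for those $k$. Your escape hatch ``enlarge $\gamma$ so that $\gamma>\eta$'' is not available: $\gamma$ is the fixed exponent in both the definition of $h$ and the lemma's conclusion, and the paper identifies it with the exponent governing $L_n^\star$. The paper's proof avoids this by \emph{not} collapsing: it keeps the full product $\prod_{i=0}^{k-1}\tfrac{2}{3-\tau}\,L_n^{\star,\text{up}}(\widehat u_{i+1}^{\sss{(q)}})/h(\widehat u_i^{\sss{(q)}})$ (this is \eqref{eq:A11-bound}), and each factor contributes $\exp\{-c(\tau-2)^{-(i+2)\gamma}(\log m_q)^\gamma\}$, so the surplus grows with $k$ and absorbs the growth of $L_n^\star(\widehat u_k^{\sss{(q)}})$.

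Second, your Markov step on the truncated sum with slack $\log m_q$ only yields $\Pv_{\sss Y}(H>\text{mean}\cdot\log m_q)\le 1/\log m_q$, so the complementary event already exceeds the target $\exp\{-C(\log m_q)^\gamma\}$. The paper (Claim~\ref{cl:tech}, \eqref{eq:iid-sum}) sets the threshold at $\widehat u_0^{\sss{(q)}}=(m_qh(m_q))^{1/(\tau-2)}$ directly; then the truncated-mean over threshold is $m_q(\widehat u_0^{\sss{(q)}})^{2-\tau}L_n^\star(\widehat u_0^{\sss{(q)}})=L_n^\star(\widehat u_0^{\sss{(q)}})/h(m_q)$, which is already $\le\exp\{-C(\log m_q)^\gamma\}$ with no extra slack needed. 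With these two corrections — keep the product of ratios as in \eqref{eq:A11-bound}, and use the threshold $\widehat u_0^{\sss{(q)}}$ for $H$ — your argument becomes the paper's.
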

Before we prove this lemma, we need the following technical claim:
\begin{claim}\label{cl:tech}
Let $(D_{n,i}^\star)_{i\le m}$ be i.i.d.\ from distribution $F_n^\star$ or $F^\star$.  Then there exists a $0<C<\infty$, so that
\be\label{eq:iid-sum} \Pv_{\sss{Y}}\left(  \sum_{i=1}^{m} D_{n,i}^\star \ge (m h(m))^{1/(\tau-2)} \right)  \le \exp\{ -C (\log m)^\gamma\}. \ee
Further, for any $y\in [0, n^{{\beta_n}}]$,
\be\label{eq:upper-mom} \sum_{d_{\pi}\ge y} \frac{d_{\pi}  }{\ell_n} \le L_n^{\star, \text{up}}(y)y^{2-\tau} \ee
where we denote by $L_n^{\star, \text{up}}$ the upper bound in \eqref{eq:L} on $L_n^\star$. Next, the empirical truncated second moment satisfies for all $y_n\to \infty$ and large enough $n$ that
\be \label{eq:second-mom}\sum_{\pi: d_{\pi}\le y_n} \frac{d_{\pi} (d_{\pi}-1)}{\ell_n} \le \frac{2}{3-\tau}(y_n)^{3-\tau} L_n^{\star, \text{up}}(y_n).\ee
\end{claim}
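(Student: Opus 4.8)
\emph{Plan.} All three bounds reduce, via the \emph{size-biasing identity}, to tail estimates for $D_n^\star$. Indeed, \eqref{def:size-biased1} gives $\sum_{v:\,d_v=k}\tfrac{d_v}{\ell_n}=\tfrac{k}{\ell_n}\,\#\{v:d_v=k\}=\Pv(D_n^\star=k-1)$, hence $\sum_{d_\pi\ge y}\tfrac{d_\pi}{\ell_n}=\Pv(D_n^\star\ge\lceil y\rceil-1)=1-F_n^\star(\lceil y\rceil-2)$ and, multiplying by $(k-1)$ before summing, $\sum_{d_\pi\le y_n}\tfrac{d_\pi(d_\pi-1)}{\ell_n}=\Ev\big[D_n^\star\,\ind_{\{D_n^\star\le y_n-1\}}\big]$ (and identically with $F^\star$, $D^\star$ in the limiting case). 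So \eqref{eq:upper-mom} is a statement about the tail of $D_n^\star$ and \eqref{eq:second-mom} about its truncated first moment, and \eqref{eq:size-biased2} together with the bound \eqref{eq:L} on $L_n^\star$ is exactly what is needed. I would prove \eqref{eq:upper-mom} and \eqref{eq:second-mom} first and then deduce \eqref{eq:iid-sum} from \eqref{eq:second-mom} by a single--big--jump argument.

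For \eqref{eq:upper-mom}: in the power-law range $y\le n^{{\beta_n}(1-\ve)}$ the identity and \eqref{eq:size-biased2} give $\sum_{d_\pi\ge y}\tfrac{d_\pi}{\ell_n}=1-F_n^\star(\lceil y\rceil-2)\le L_n^{\star,\text{up}}(y)\,y^{2-\tau}$ directly (the $\lceil\cdot\rceil$--shift is absorbed by enlarging $C_1^\star$ if needed); for $y$ in the narrow truncation window and beyond it one uses monotonicity of $1-F_n^\star$ and $F_n^\star\equiv 1$ past $n^{{\beta_n}(1+\ve)}$. For \eqref{eq:second-mom} I would use the layer-cake formula, $\Ev\big[D_n^\star\ind_{\{D_n^\star\le y_n-1\}}\big]\le\sum_{k=1}^{y_n}\Pv(D_n^\star\ge k)\le 1+\sum_{k=1}^{y_n}L_n^{\star,\text{up}}(k)\,k^{-(\tau-2)}$, then pull $L_n^{\star,\text{up}}$ out of the sum at $y_n$ (legitimate since $L_n^{\star,\text{up}}(x)=\exp\{C_1^\star(\log x)^\eta\}$ is increasing, $\eta<1$) and bound the remaining partial sum by the elementary integral comparison $\sum_{k=1}^{y_n}k^{-(\tau-2)}\le\tfrac{2}{3-\tau}\,y_n^{3-\tau}$, valid once $y_n$ exceeds an absolute constant since $\tau-2\in(0,1)$; the factor $2$ is precisely the room that swallows the ``$+1$'' and the endpoint. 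This is a Karamata-type computation (cf.\ \cite[Proposition~1.5.8]{BinGol}), and it is uniform in $n$ because $L_n^{\star,\text{up}}$ does not depend on $n$.

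For \eqref{eq:iid-sum}, set $t:=(m\,h(m))^{1/(\tau-2)}$ and note that $1/(\tau-2)>1$, so $t\gg m$. Split according to whether the maximum exceeds $t$:
\[\Pv_{\sss{Y}}\Big(\sum_{i\le m}D_{n,i}^\star\ge t\Big)\le\Pv_{\sss{Y}}\Big(\exists\, i\le m:\ D_{n,i}^\star> t\Big)+\Pv_{\sss{Y}}\Big(\sum_{i\le m}D_{n,i}^\star\ind_{\{D_{n,i}^\star\le t\}}\ge t\Big).\]
By a union bound and the tail estimate behind \eqref{eq:upper-mom}, the first term is at most $m\,\Pv(D_n^\star>t)\le m\,L_n^{\star,\text{up}}(t)\,t^{-(\tau-2)}=L_n^{\star,\text{up}}(t)/h(m)$, using $t^{-(\tau-2)}=1/(m\,h(m))$; by Markov's inequality and \eqref{eq:second-mom} applied with $y_n=t$, the second term is at most $m\,\Ev[D_n^\star\ind_{\{D_n^\star\le t\}}]/t\le C\,m\,t^{2-\tau}L_n^{\star,\text{up}}(t)=C\,L_n^{\star,\text{up}}(t)/h(m)$. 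Thus both pieces are governed by $L_n^{\star,\text{up}}(t)/h(m)$, and it only remains to check that this ratio is at most $\exp\{-C(\log m)^\gamma\}$. Since $\gamma<1$ one has $\log t\le\tfrac{2}{\tau-2}\log m$ for $m$ large, whence $\log L_n^{\star,\text{up}}(t)\le C_1^\star(\tfrac{2}{\tau-2})^\eta(\log m)^\eta$, while $\log h(m)=\tfrac{2C_2}{(\tau-2)^\gamma}(\log m)^\gamma$ with $C_2\ge C_1^\star$; this is exactly the comparison underlying \eqref{eq:h-L1}--\eqref{eq:h-L2}, and it yields $\log\big(L_n^{\star,\text{up}}(t)/h(m)\big)\le-C(\log m)^\gamma$ for $m$ large. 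Since the bound is vacuous for bounded $m$, this proves \eqref{eq:iid-sum}.

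\emph{Main obstacle.} The identities make \eqref{eq:upper-mom} and the algebra in \eqref{eq:iid-sum} essentially bookkeeping; the genuine points are (i) the Karamata partial-sum estimate with the \emph{explicit} constant $\tfrac{2}{3-\tau}$ in \eqref{eq:second-mom}, which must hold uniformly in $n$ (fine, since $L_n^{\star,\text{up}}$ is $n$-free), and (ii) the single--big--jump reduction in \eqref{eq:iid-sum}: the choice of truncation level $t$, the Markov step on the truncated sum, and above all the slowly varying comparison $L_n^{\star,\text{up}}(t)\ll h(m)$, which is where the specific shape of $h$ in \eqref{def:h} and the relations \eqref{eq:h-L1}--\eqref{eq:h-L2} enter. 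Handling the thin window $(n^{{\beta_n}(1-\ve)},n^{{\beta_n}(1+\ve)})$ in \eqref{eq:upper-mom} and \eqref{eq:second-mom} is a minor nuisance, but causes no real difficulty: that window can be taken arbitrarily thin, and it is avoided altogether in the applications of the claim (where $y_n$ is either polynomially small relative to $n^{{\beta_n}}$, or equal to $n^{{\beta_n}}$).
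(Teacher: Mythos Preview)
Your proposal is correct and follows essentially the same route as the paper: the same size-biasing identities reduce \eqref{eq:upper-mom} and \eqref{eq:second-mom} to the tail of $D_n^\star$ and its truncated first moment (handled via the tail-sum/Karamata argument), and your single-big-jump decomposition for \eqref{eq:iid-sum} with truncation level $t=(m\,h(m))^{1/(\tau-2)}$, union bound on the maximum, and Markov on the truncated sum is exactly the paper's argument, down to the final comparison $L_n^{\star,\text{up}}(t)/h(m)\le\exp\{-C(\log m)^\gamma\}$. The only cosmetic difference is that you pull $L_n^{\star,\text{up}}$ out of the partial sum by monotonicity before the integral comparison, whereas the paper invokes Karamata's theorem directly on $\sum_{j\le y_n}L_n^{\star,\text{up}}(j)j^{-(\tau-2)}$; these amount to the same thing.
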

\begin{proof}
The proof of \eqref{eq:upper-mom} is the probably the easiest.
Namely, by the definition of the empirical distribution as well as $F_n^\star$ the sum can be rewritten as follows:
\be \sum_{d_{\pi}\ge y} \frac{d_{\pi}  }{\ell_n}= \sum_{j\ge y} \frac{\sum_{v\in [n]} j \ind_{\{d_v=j\}}}{\ell_n}= 1-F_n^\star(y-1), \ee
and an application of \eqref{eq:size-biased2} establishes \eqref{eq:upper-mom}. Next, \eqref{eq:second-mom} can be rewritten similarly,
\be\label{eq:second-trunc-mom} \ba \sum_{\pi: d_{\pi}\le y_n} \frac{d_{\pi} (d_{\pi}-1)}{\ell_n} &=  \sum_{j\le y_n} (j-1) \frac{ \sum_{v\in [n]} j\ind_{\{d_v=j\}}}{\ell_n}=\sum_{j\le y_n} (j-1) \Pv(D_n^\star=j-1)\\
& \le \sum_{j\le y_n} (1-F_n^\star(j)) \le \sum_{j\le y_n} \frac{L_n^{\star}(j)}{j^{\tau-2}}\le \sum_{j\le y_n} \frac{L_n^{\star, \text{up}}(j)}{j^{\tau-2}}.\ea  \ee
To obtain the second line, we used the usual trick to relate the expectation to the tail of a distribution, namely,
\[ \ba\sum_{j\le y_n} (j-1) \Pv(D_n^\star=j-1)&=\sum_{j\le y_n} \sum_{s\le j-1} \Pv(D_n^\star=j-1) = \sum_{s\le y_n-1} \sum_{s< j \le y_n} \Pv(D_n^\star=j-1)\\
&\le \sum_{s\le y_n-1} \Pv(D_n^\star \ge s) = \sum_{s\le y_n} (1-F_n^\star(s)).\ea\]
The condition that  $y_n\to \infty$  as $n\to \infty$ enables us to apply the direct half of Karamata's theorem\footnote{Here we use that the \emph{upper bound} $L_n^{\star, \text{up}}$ on the function $L_n^\star$ is slowly varying.} (see \cite[Page 26]{BinGol}), and obtain that for all large enough $n$, the following bound holds on the rhs of \eqref{eq:second-trunc-mom}:
\be\label{eq:karam} \sum_{j\le y_n} \frac{L_n^{\star, \text{up}}(j)}{j^{\tau-2}}\le \frac{2}{3-\tau}(y_n)^{3-\tau} L_n^{\star, \text{up}}(y_n),\ee
finishing the proof of \eqref{eq:second-mom}.
The proof of \eqref{eq:iid-sum} is the trickiest, we handle it with a truncation method. Let us shortly write $M_m:=(m h(m))^{1/(\tau-2)}$.
First we use a union bound:
\be\ba\label{eq:single-big-union}\Pv_{\sss{Y}}\left(  \sum_{i=1}^{m} D_{n,i}^\star \ge M_m \right) &\le \Pv\left( \exists i\le m:  D_{n,i}^\star \ge M_m\right) \\
&\ + \Pv\left(  \sum_{i=1}^{m} D_{n,i}^\star\ind_{\{D_{n,i}^\star <  M_m \}} \ge M_m \right). \ea\ee
Then, we
estimate the first term on the rhs of \eqref{eq:single-big-union} again by a union bound:
\be\label{eq:single-big} \Pv\left( \exists i\le m:  D_{n,i}^\star \ge M_m \right) \le  m (1-F_n^{\star}(M_m))=L_n^\star(M_m) / h(m).\ee
We can use Markov's inequality on the second term on the rhs of \eqref{eq:single-big-union}:
\be\label{eq:truncated-markov} \Pv_{\sss{Y}}\left(  \sum_{i=1}^{m} D_{n,i}^\star\ind_{\{D_{n,i}^\star < M_m \}}  \ge M_m \right) \le \frac{m \Ev[D_{n,i}^\star\ind_{\{D_{n,i}^\star < M_m \}}]}{M_m } \le C\frac{m M_m^{3-\tau} L_n^\star(M_m)  }{M_m },
 \ee
 where we have used that the expectation in the numerator equals precisely the truncated empirical second moment as in \eqref{eq:second-mom} with $y_n:=M_m$, thus this expectation can be handled in the same way as the rhs of \eqref{eq:second-trunc-mom}. After elementary calculation, the sum of rhs of \eqref{eq:single-big} and \eqref{eq:truncated-markov} equals
\be\label{eq:first-error-term} (C+1) \frac{L_n^\star(M_m) }{h(m)}\le (C+1)\exp\left\{ -\frac{C_2}{(\tau-2)^\gamma} (\log m)^\gamma\right\}. \ee This, together with \eqref{eq:single-big-union} establishes \eqref{eq:iid-sum}.
\end{proof}

\begin{proof}[Proof of Lemma \ref{lem:badpaths}]
The statement can be proved by path-counting methods in the same way as \cite[Lemma 5.2]{BarHofKom14} in the Appendix of that paper.
Some minor modifications are needed to that proof, for two reasons: First, Assumption \ref{assu:degree-dist} imposes an assumption on the \emph{empirical degree distribution} $F_n$ and the degrees are no longer i.i.d.. This makes certain estimates about truncated empirical moments easier (i.e., \eqref{eq:upper-mom} and \eqref{eq:second-mom}). On the other hand, Assumption \ref{assu:degree-dist} is weaker than
the assumption on the degrees there, compare to \cite[(1.1)]{BarHofKom14}. As a result, the recursion of $\huiq$ uses the function $h$ as a multiplier instead of the constant function $C\log n$. To make the argument easy to follow here, we recall the main steps in the proof and highlight differences only.

First of all, formulas \cite[(A.3)-(A.5)]{BarHofKom14} that count the expected number of bad paths apply word-for-word. That is,
the expected number of paths in $\CMD$ through a fixed sequence of vertices $(\pi_0, \pi_1, \dots, \pi_k)$ equals
\be\label{eq:combi} \prod_{i=1}^k \frac{1}{\ell_n^\star-2i+1} d_{\pi_0} \left(\prod_{i=1}^{k-1} d_{\pi_i} (d_{\pi_i}-1)\right) d_{\pi_k}, \ee
where $\ell_n^\star$ denotes the number of unpaired half-edges at the moment of counting these paths. It is not hard to see (using the sizes $m_q$ in  \eqref{eq:mr}\eqref{eq:mb}) that $\ell_n^\star=\ell_n (1-o_{\Pv}(1))$ when we apply this to a path emanating from $\CC^{\sss{(q)}}_{t(n^{\vr_n'})}$.

This formula holds for any fixed sequence of vertices. We can count the expected number of $q$-bad paths in $Bad\CP_k^{\sss{(q)}}$ when we impose the same restrictions on  $\pi_i$ as in \eqref{def:badpaths}, and sum over all possible such options:
\be\label{eq:A5} \Ev_{\sss{Y}}\left[ Bad\CP_k^{\sss{(q)}}|\right] \le \e^{Ck^2/\ell_n}\sum_{\pi_0 \in \CC^{\sss{(q)}}_{t(n^{\vr_n'})} }\!\!\!\!d_{\pi_0}   \prod_{i=1}^{k-1}\Bigg(\sum_{\pi_i: d_{\pi_i}\le \widehat u_i^{\sss{(b)}}}  \frac{d_{\pi_i} (d_{\pi_i}-1)}{\ell_n^\star}\Bigg) \Bigg(\sum_{{\substack{ \pi_k \in [n] \\ d_{\pi_k} \ge \widehat u_k^{\sss{(b)}}}}}\frac{d_{\pi_k}}{\ell_n^\star}\Bigg).\ee

The next step is to estimate the different factors on the rhs of \eqref{eq:A5}: here, the two proofs separate. In fact, we can use the bounds in Claim \ref{cl:tech}, and then we can spare all the arguments between \cite[(A.6)-(A.9)]{BarHofKom14}.
Let $\CE_n:=\{ \sum_{i=1}^{m_q} D_i^\star \le \widehat u_0^{\sss{(q)}} \}$, so that $\CE_n$ holds $\Pv_{\sss{Y}}$-whp by Claim \ref{cl:tech}, with $m:=m_q$.
 Using the estimates \eqref{eq:upper-mom} and \eqref{eq:second-mom} in Claim \ref{cl:tech} with $y_n=\huiq$,  \eqref{eq:A5} turns into
\be\label{eq:A11} \Ev_{\sss{Y}}[ |Bad\CP_k^{\sss{(q)}}| \mid \CE_n] \le \widehat u_0^{\sss{(q)}} \cdot   (\widehat u_k^{\sss{(q)}})^{2-\tau} L_n^{\star, \text{up}}(\widehat u_{k}^{\sss{(q)}})  \cdot \prod_{i=1}^{k-1} (\huiq)^{3-\tau} \frac{2}{3-\tau}L_n^{\star, \text{up}} (\huiq).\ee
This formula replaces \cite[(A.11)]{BarHofKom14}.
It is an elementary calculation using the defining recursion \eqref{eq:uibar} that $\widehat u_j^{\sss{(q)}} \cdot (\widehat u_{j+1}^{\sss{(q)}})^{3-\tau}h(\widehat u_j^{\sss{(q)}})=\widehat u_{j+1}^{\sss{(q)}}$.
Applying this equation to $j=0, \dots, k-2$ sequentially, we arrive at the identity
\be\label{eq:identity}  \widehat u_0^{\sss{(q)}} \cdot   (\widehat u_k^{\sss{(q)}})^{2-\tau} h(\widehat u_{k-1}^{\sss{(q)}})  \cdot \prod_{i=1}^{k-1} (\huiq)^{3-\tau} h (\widehat u_{i-1}^{\sss{(q)}}) =1.\ee
Comparing \eqref{eq:A11} to \eqref{eq:identity},
we see that\footnote{Let us compare the inequality \eqref{eq:A11-bound} with the bound $O((\log n)^{-k})$ on $\Ev_{\sss{Y}}[ |\CC^{\sss{(b)}} ad\CP_k^{\sss{(q)}}|$ in \cite{BarHofKom14} that can be found in the second formula after \cite[(A.11)]{BarHofKom14}. In \cite{BarHofKom14} the recursion in \eqref{eq:uibar} was used with the special choice $h(x)\equiv C\log n$,  and $L_n^\star$ was assumed to be a bounded function. Using these, the bound $O((\log n)^{-k})$ is a special case of the bound here in \eqref{eq:A11-bound}. Thus, \eqref{eq:A11-bound} is a generalisation of the bound in \cite{BarHofKom14} for general choice of $h$ in the  recursion \eqref{eq:uibar}. The generalisation was necessary since in this paper we allow a wide range of $L_n$ in \eqref{eq:F} while in \cite{BarHofKom14} the more restrictive $0<c\le L_n\le C<\infty$ assumption was set.}
\be\label{eq:A11-bound} \Ev_{\sss{Y}}\left[ |Bad\CP_k^{\sss{(q)}}| \mid \CE_n \right] \le \prod_{i=0}^{k-1} \frac{2}{3-\tau} \frac{L_n^{\star, \text{up}} (\widehat u_{i+1}^{\sss{(q}})}{h(\huiq)}.  \ee
Next we show that for all $n$ large enough, the rhs of \eqref{eq:A11-bound} tends to zero even when summed over all $k\ge 1$. For this, using the recursion on $\huiq$ in \eqref{eq:uibar} as well as the function $h$ in \eqref{def:h},
\be\label{eq:elln} \ba L_n^{\star, \text{up}}(\widehat u_{i+1}^{\sss{(q}}) &= \exp\left\{ C^\star_1 \left(\log (\huiq)^{1/(\tau-2)} + 2 C_2 (\log (\huiq)^{1/(\tau-2)})^\gamma \right)^\gamma  \right\}\\
&=\exp\left\{ C^\star_1 \left(\log (\huiq)^{1/(\tau-2)}\right)^\gamma \left(1+ 2 C_2 (\log (\huiq)^{1/(\tau-2)})^{\gamma-1} \right)^\gamma \right\}  \\
&\le\exp\left\{ \frac{3C^\star_1}{2} \left(\log (\huiq)^{1/(\tau-2)}\right)^\gamma  \right\},  \ea\ee
where in the inequality we have used that since $\gamma<1$ and $\huiq$ tends to infinity with $n$, for all $n$ large enough the last factor in the rhs of the second line is at most $3/2$.
The denominator of the $i$th factor in \eqref{eq:A11-bound} has the exact same form, except there the constant multiplier in the exponent is at least $2C_1^\star$. Thus, the rhs of \eqref{eq:A11} is at most
\be\label{eq:A11-bound-2} \ba\Ev_{\sss{Y}}\left[ |Bad\CP_k^{\sss{(q)}}| \mid \CE_n \right] &\le \prod_{i=0}^{k-1} \frac{2}{3-\tau} \exp\left\{ -\frac{C^\star_1}{2} \left(\log ((\huiq)^{1/(\tau-2)})\right)^\gamma  \right\}\\
&\le \frac{2^k}{(3-\tau)^k} \exp\left\{  -\frac{C^\star_1}{2}\sum_{i=0}^{k-1} (\tau-2)^{-(i+2)\gamma } (\log m_q)^\gamma\right\}, \ea \ee
where we used the lower bound $\huiq\ge m_q^{(\tau-2)^{-(i+1)}}$ that follows from the recursion \eqref{eq:uibar} (see also \eqref{eq:wideui-ui} below) to obtain the second line.
Since $(\tau-2)^{-\gamma}>1$, the sum in the exponent is of order $(\tau-2)^{-k\gamma}(\log m_q)^\gamma$. So, by Markov's inequality we obtain for some constant $C>0$ that
\be\label{eq:no-bad-path}\Pv_{\sss{Y}}\!\left(\exists k\ge 1,   Bad\CP_k^{\sss{(q)}} \neq \varnothing \right)\le \sum_{k=1}^\infty  \Ev_{\sss{Y}}[| Bad\CP_k^{\sss{(q)}}|] \le C \exp\{- C (\log m_q)^\gamma   \}\to 0 \ee
as $n \to \infty$, since $m_q$ is a positive power of $n$ under $\Pv_{\sss{Y}}$ (see \eqref{eq:mr}, \eqref{eq:mb}).
We yet have to add the case $k=0$: note that $Bad\CP_0^{\sss{(q)}} \neq \varnothing$ means that there exists a vertex in the last generation of the BP  with degree at least $\widehat u_{0}^{\sss{(q)}}$. We have already estimated this probability in \eqref{eq:single-big}, and the error term obtained in \eqref{eq:first-error-term} can be merged into the rhs of \eqref{eq:no-bad-path}, establishing the statement of the lemma in \eqref{eq:badpath}.
\end{proof}
\begin{proof}[Proof of Proposition \ref{prop:hubs}, lower bound] We argue that $T_q({\beta_n})$ is also whp a lower bound to reach the hubs, that is, there is whp no path to the hubs shorter than $T_q({\beta_n})$.

On the event $\{ \forall k\in [0,  i_{\star \sss{(q)}}]: Bad\CP_k^{\sss{(q)}} = \varnothing, q\in\{r,b\} \}$, that occurs $\Pv_{\sss{Y}}$-whp by Lemma \ref{lem:badpaths}, we can use the upper bound $\widehat u_i^{\sss{(q)}}$ on the degrees at time $t(n^{\vr_n'})+i$ for all $i\le i_{\star \sss{(q)}}$. Hence
we obtain that the time it takes to reach a vertex of  degree at least $n^{(\tau-2)/(\tau-1)}$ is at least
\[ \widehat i_{\star \sss{(q)}} :=\inf \{ i: \huiq \ge n^{{\beta_n} (\tau-2)} \},
\]
which, considering the double exponential growth of $\widehat u_i^{\sss{(q)}}$ by powers of $1/(\tau-2)$, is similar to the definition of  $ i_{\star \sss{(q)}}$ in \eqref{def:i*}.
The lower bound follows once we show that $\widehat i_{\star \sss{(q)}}=i_{\star \sss{(q)}} $ holds $\Pv_{\sss{Y}}$-whp. For this it is enough to show that \eqref{eq:uiq-asymp} holds also for $\huiq$.
From the recursion \eqref{eq:uibar},
\be\label{eq:wideui-ui} \widehat u_i^{\sss{(q)}} = m_q^{(\tau-2)^{-(i+1)}}
\prod_{k=1}^{i+1}h(\widehat u_{i-k}^{\sss{(q)}})^{(\tau-2)^{-k}}. \ee
After a somewhat lengthy calculation, using a similar argument as in the second and third line of \eqref{eq:elln} recursively, we obtain that the product on the rhs is at most
\[\begin{split}  \exp\left\{  2 C_2(\log (m_q^{1/(\tau-2)}))^\gamma \frac{1}{(\tau-2)^{i+1}}\sum_{k=1}^{i} \frac{1}{(\tau-2)^{k(\gamma-1)}} \right\} \\ \le \exp\left\{  \wit C  (\log (m_q^{1/(\tau-2)}))^\gamma  \frac{1}{(\tau-2)^{i+1}} \right\},\end{split}\]
since $\gamma<1$. Recall that $m_q$ tends to infinity with $n$ and  comparing this to  \eqref{eq:wideui-ui} as well as to the definition of $\sim$ in \eqref{def:sim}.
Nevertheless, the product in \eqref{eq:wideui-ui} is of much smaller order than the main term $m_q^{(\tau-2)^{-(i+1)}}$ and thus
we obtain that \eqref{eq:uiq-asymp} holds and also that $\widehat i_{\star\sss{(q)}}=i_{\star \sss{(q)}} $ $\Pv_{\sss{Y}}$-whp. This finishes the proof of the lower bound.
\end{proof}
\section{Early meeting is unlikely}\label{s:no-early}
For the lower bound of Theorem \ref{thm:distances}, we crucially use the following proposition that shows that the two explorations are disjoint,  i.e., the vertices at distance at most $T_b({\beta_n})$ away from $v_b$ are all different from the vertices that are distance at most $T_r({\beta_n})$ away from $v_r$:
\begin{proposition}\label{lem:no-early-meeting} 
Let us consider the configuration model on $n$ vertices with empirical degree distribution that satisfies Assumption \ref{assu:degree-dist}, and let $v_r, v_b$ be two uniformly chosen vertices.
The event
\be \label{eq:no-early-meeting} \CC^{\sss{(r)}}_{T_r({\beta_n})}\cap \CC^{\sss{(b)}}_{T_b({\beta_n})} = \varnothing \ee
holds $\Pv_{\sss{Y}}$-whp.
Further, the total number of half-edges attached to vertices in $\CC^{\sss{(r)}}_{T_r({\beta_n})}, \CC^{\sss{(b)}}_{T_b({\beta_n})}$ is the same order of magnitude as the degree of $v_q^\star$ in Proposition \ref{prop:hubs} up to smaller order correction terms.
That is, $\Pv_{\sss{Y}}$-whp,
\be \label{eq:half-edge-red-bound} H(\CC^{\sss{(r)}}_{T_r({\beta_n})}) \sim n^{{\beta_n}(\tau-2)^{\bnr({\beta_n})} } \quad \mbox{and}\quad   H(\CC^{\sss{(b)}}_{T_b({\beta_n})}) \sim n^{{\beta_n}(\tau-2)^{\bnb({\beta_n})} }.
\ee
\end{proposition}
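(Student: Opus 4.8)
The plan splits into the half-edge estimate \eqref{eq:half-edge-red-bound} and the disjointness \eqref{eq:no-early-meeting}. For \eqref{eq:half-edge-red-bound} the lower bound is immediate: by Proposition \ref{prop:hubs}, $\Pv_{\sss{Y}}$-whp $v_q^\star\in\CC^{\sss{(q)}}_{T_q({\beta_n})}$ has degree $\sim n^{{\beta_n}(\tau-2)^{\bnq({\beta_n})}}$, so $H(\CC^{\sss{(q)}}_{T_q({\beta_n})})\ge d_{v_q^\star}$. For the matching upper bound I will work on the $\Pv_{\sss{Y}}$-whp event $\cB$ of Lemma \ref{lem:badpaths} (no $q$-bad path) together with $\CE_n=\{\sum_{i\le m_q}D_i^\star\le\widehat u_0^{\sss{(q)}}\}$ from Claim \ref{cl:tech}; on $\cB$, every vertex at distance $t(n^{\vr_n'})+i$ from $v_q$ has degree at most $\widehat u_i^{\sss{(q)}}$ for all $i\le i_{\star\sss{(q)}}$. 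Let $\Delta H_i$ be the total degree of the vertices at distance exactly $t(n^{\vr_n'})+i$. I will rerun the first-moment computation \eqref{eq:A5}--\eqref{eq:A11-bound-2} from the proof of Lemma \ref{lem:badpaths}, but weighting each path by the degree of its endpoint instead of merely counting it; the only change is that the last factor $(\widehat u_k^{\sss{(q)}})^{2-\tau}$ in \eqref{eq:A11} becomes $\sum_{d_\pi\le\widehat u_k^{\sss{(q)}}}d_\pi^2/\ell_n\le\frac{2}{3-\tau}(\widehat u_k^{\sss{(q)}})^{3-\tau}L_n^{\star,\text{up}}(\widehat u_k^{\sss{(q)}})$ by \eqref{eq:second-mom}, i.e.\ one extra factor $\widehat u_k^{\sss{(q)}}$. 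Telescoping with the identity $\widehat u_j^{\sss{(q)}}(\widehat u_{j+1}^{\sss{(q)}})^{3-\tau}h(\widehat u_j^{\sss{(q)}})=\widehat u_{j+1}^{\sss{(q)}}$ and absorbing the slowly varying factors exactly as in \eqref{eq:elln}--\eqref{eq:A11-bound-2} gives, for $k\le i_{\star\sss{(q)}}$,
\[
\Ev_{\sss{Y}}\!\left[\Delta H_k\,\ind_{\cB}\right]\ \le\ \Big(\tfrac{C}{3-\tau}\Big)^{k}\widehat u_k^{\sss{(q)}},
\]
with the base case $\Delta H_0\le 2\widehat u_0^{\sss{(q)}}$ valid on $\CE_n$. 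Since $i_{\star\sss{(q)}}$ is a tight, $\CF_{\vr_n'}$-measurable random variable, restricting to $\{i_{\star\sss{(q)}}\le M\}$ makes $(C/(3-\tau))^{k}$ a constant; summing over $k\le M$ and using Markov's inequality then yields $H(\CC^{\sss{(q)}}_{T_q({\beta_n})})=\sum_{i\le i_{\star\sss{(q)}}}\Delta H_i\le n^{{\beta_n}(\tau-2)^{\bnq({\beta_n})}}\e^{(\log n^{{\beta_n}})^{\theta}}$ $\Pv_{\sss{Y}}$-whp, because $\widehat u_{i_{\star\sss{(q)}}}^{\sss{(q)}}\sim n^{{\beta_n}(\tau-2)^{\bnq({\beta_n})}}$ by \eqref{eq:uiq-asymp}. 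Running the same argument one generation shorter gives the auxiliary bound $H(\CC^{\sss{(b)}}_{T_b({\beta_n})-1})\sim n^{{\beta_n}(\tau-2)^{\bnb({\beta_n})+1}}$, using $\widehat u_{i_{\star\sss{(b)}}-1}^{\sss{(b)}}\sim(\widehat u_{i_{\star\sss{(b)}}}^{\sss{(b)}})^{\tau-2}$.

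For \eqref{eq:no-early-meeting} I use that the intersection $\CC^{\sss{(r)}}_{T_r({\beta_n})}\cap\CC^{\sss{(b)}}_{T_b({\beta_n})}$ does not depend on the exploration order, so I first explore the red cluster completely up to depth $T_r({\beta_n})$ and only then explore blue. After the red exploration every half-edge incident to a red vertex at distance $\le T_r({\beta_n})-1$ from $v_r$ is already paired, so the only red half-edges still available are the unpaired ones at the red vertices at distance exactly $T_r({\beta_n})$, whose number is at most $H(\CC^{\sss{(r)}}_{T_r({\beta_n})})\sim n^{{\beta_n}(\tau-2)^{\bnr({\beta_n})}}$. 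Provided $v_b\notin\CC^{\sss{(r)}}_{T_r({\beta_n})}$ — which holds whp since $|\CC^{\sss{(r)}}_{T_r({\beta_n})}|\le H(\CC^{\sss{(r)}}_{T_r({\beta_n})})=o(n)$ and $v_b$ has tight degree — a common vertex of the two clusters is created only when one of the $\le H(\CC^{\sss{(b)}}_{T_b({\beta_n})-1})$ half-edges paired while exploring $\CC^{\sss{(b)}}_{T_b({\beta_n})}$ is matched to one of these red boundary half-edges. As $\ell_n$ is of order $n$ and only $o(n)$ half-edges are paired in total, a union bound gives
\[
\Pv_{\sss{Y}}\!\left(\CC^{\sss{(r)}}_{T_r({\beta_n})}\cap\CC^{\sss{(b)}}_{T_b({\beta_n})}\neq\varnothing\right)\ \lesssim\ \frac{H(\CC^{\sss{(b)}}_{T_b({\beta_n})-1})\,H(\CC^{\sss{(r)}}_{T_r({\beta_n})})}{\ell_n}\ \lesssim\ n^{{\beta_n}\left((\tau-2)^{\bnb({\beta_n})+1}+(\tau-2)^{\bnr({\beta_n})}\right)-1+o(1)}.
\]
Since $(\tau-2)^{\bnb({\beta_n})+1}\le\tau-2$ and $(\tau-2)^{\bnr({\beta_n})}\le1$, the exponent is at most ${\beta_n}(\tau-1)-1\le0$. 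The passage from depth $T_b({\beta_n})$ to $T_b({\beta_n})-1$, i.e.\ the gain of one power $\tau-2<1$, is essential here: with $H(\CC^{\sss{(b)}}_{T_b({\beta_n})})$ in place of $H(\CC^{\sss{(b)}}_{T_b({\beta_n})-1})$ the bound would only be $n^{2{\beta_n}-1+o(1)}$, which need not vanish.

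The main obstacle is the borderline regime ${\beta_n}\to 1/(\tau-1)$ with $\bnr({\beta_n}),\bnb({\beta_n})\to 0$ — essentially i.i.d.\ power-law degrees with natural truncation — where the exponent above tends to $0$ and no longer beats the $\e^{\pm(\log n^{{\beta_n}})^\theta}$ errors hidden in the $\sim$-relations. The exact coincidence $\bnr({\beta_n})=\bnb({\beta_n})=0$ is a $\CF_{\vr_n'}$-null event under Assumption \ref{assu:pointmass} (no point mass of $Y$ on $(0,\infty)$), so the remaining difficulty is uniformity: for $\eps>0$, the set $A_\eps$ of realizations of $\CF_{\vr_n'}$ with $(\tau-2)^{\bnb({\beta_n})+1}+(\tau-2)^{\bnr({\beta_n})}>(\tau-1)-\eps$ satisfies $\Pv(A_\eps)\to 0$ as $\eps\downarrow 0$ uniformly in $n$, while off $A_\eps$ the exponent is below $-\eps{\beta_n}$ and dominates the corrections; a diagonal choice $\eps=\eps_n\downarrow 0$ then delivers \eqref{eq:no-early-meeting} for a.e.\ realization. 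Alternatively, one sharpens the union bound itself in this regime by observing — via Proposition \ref{prop:hubs} — that the blue exploration reaches no hub before depth $T_b({\beta_n})$, and that the red boundary is dominated by the hub $v_r^\star$, so only the blue half-edges available at depth exactly $T_b({\beta_n})$ are relevant there; this buys one further factor $\tau-2$ and a strictly negative exponent whenever $(\bnr({\beta_n}),\bnb({\beta_n}))\neq(0,0)$.
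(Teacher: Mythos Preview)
Your proof is correct and follows essentially the same approach as the paper's: both bound $H(\CC^{\sss{(q)}}_{T_q({\beta_n})})$ by counting free-ended paths of the right length emanating from $\CC^{\sss{(q)}}_{t(n^{\vr_n'})}$, with degrees capped by $\widehat u_i^{\sss{(q)}}$ on the event $\text{NoBad}$, then telescope the product via the identity $\widehat u_j^{\sss{(q)}}(\widehat u_{j+1}^{\sss{(q)}})^{3-\tau}h(\widehat u_j^{\sss{(q)}})=\widehat u_{j+1}^{\sss{(q)}}$ and apply Markov; for disjointness, both grow red to depth $T_r({\beta_n})$ first and union-bound the blue pairings against the red boundary half-edges, arriving at the same exponent ${\beta_n}\big((\tau-2)^{\bnr({\beta_n})}+(\tau-2)^{\bnb({\beta_n})+1}\big)-1$. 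Your treatment of the borderline ${\beta_n}\to 1/(\tau-1)$ via a diagonal $\eps_n\downarrow 0$ is a touch more careful than the paper's one-line invocation of Assumption~\ref{assu:pointmass}, and your displayed bound $(C/(3-\tau))^k\widehat u_k^{\sss{(q)}}$ is weaker than what the paper extracts from \eqref{eq:A11-bound-2} (where the product actually decays like $\exp\{-C(\log m_q)^\gamma\}$), but since $i_{\star\sss{(q)}}$ is $\CF_{\vr_n'}$-measurable and the $\sim$-relation allows a factor $\e^{(\log n^{{\beta_n}})^\theta}$, your Markov step still goes through.
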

\begin{proof}
Recall that we write $\Pv_{\sss{Y}}(\cdot), \Ev_{\sss{Y}}[\cdot]$ for probabilities of events and expectations of random variables conditioned on $\CF_{\vr_n'}$. Recall from Lemma \ref{lem:badpaths} that the event $\text{NoBad}:=\{ Bad \CP^{\sss{(q)}}_k =\varnothing \  \forall k\le i_{\star \sss{(q)}} \mbox{ for } q\in\{r,b\}\} $ holds $\Pv_Y$-whp. Since for any event $A$, $\Pv_Y(A)\ge \Pv_Y(A\mid \text{NoBad}) \Pv_Y(\text{NoBad})$, it is enough to show that the event in \eqref{eq:no-early-meeting} holds with probability tending to $1$ when conditioned on $\text{NoBad}$.

To prove the proposition  we first calculate the total number of free (unpaired) half-edges going out of the set $\CC^{\sss{(r)}}_{T_q({\beta_n})-\ell}$, (that we denote by $H(\CC^{\sss{(r)}}_{T_q({\beta_n})-\ell})$), for any $\ell\in [0, i_{\star \sss{(q)}}], q\in\{r,b\}$. We do this by counting the number of paths \emph{with free ends}:  we say that a sequence of vertices and half-edges $(\pi_0, s_0, t_1, \pi_1, s_1, t_2,  \dots,  t_k, \pi_k, s_k)$ forms a  \emph{free-ended path} of length $k$ in $\CMD$, if for all $0< i\le k$, the half-edges $s_i, t_i$ are incident to the vertex $\pi_i$ and $(s_{i-1}, t_i)$ forms an edge between vertices $\pi_{i-1},\pi_i$. Clearly, since the same vertex might be approached on several paths, the total number of free half-edges in $\CC^{\sss{(q)}}_{T_q({\beta_n})-\ell}$ can be bounded from above by the number of free-ended paths of length $T_q({\beta_n})-\ell$, starting from $v_r$. By the definition of $ Bad \CP^{\sss{(q)}}_k$ in \eqref{def:badpaths}, on the event $\text{NoBad}$ at time $t(n^{\vr_n'}) + i$, $\widehat u_{i}^{\sss{(q)}}$ defined in \eqref{eq:uibar}  is an upper bound on the degrees of color $q$ vertices.
We write $\CN_k(\CA, \text{free})$ for the set of, and $N_k(\CA, \text{free})$ for the total number of, $k$-length free-ended paths starting from an unpaired half-edge that belongs to the set $\CA$. Then, since $T_q({\beta_n})=t(n^{\vr_n'})+i_{\star \sss{(q)}}$ (see \eqref{eq:k*+i*}), for any $\ell\le i_{\star \sss{(q)}}$,
\be\label{eq:half-edge-to-path}H(\CC^{\sss{(r)}}_{T_q({\beta_n})-\ell}) \le  N_{i_{\star \sss{(q)}}-\ell}(\CC^{\sss{(r)}}_{t(n^{\vr_n'})}, \text{free}), \ee
and recall that $\CC^{\sss{(r)}}_{t(n^{\vr_n'})}$ is coupled to the branching process described in Section \ref{s:couple}. Hence, the degrees in the last generation of the BP phase are i.i.d.\ having distribution $D^\star_n$ satisfying \eqref{eq:size-biased2}.
When counting free-ended paths through fixed vertices $(\pi_0, \dots, \pi_k)$, \eqref{eq:combi} should be modified so that we have to choose two half-edges also from the end vertex $\pi_k$, thus there is an additional factor $d_{\pi_k}-1$ that should multiply \eqref{eq:combi}.  The effect of this on \eqref{eq:A5} is that the factor containing $\pi_k$ can be merged into the previous factor:
\be\ba\label{eq:nkab} \Ev_{\sss{Y}}&\left[  N_{i_{\star \sss{(q)}}-\ell}(\CC^{\sss{(r)}}_{t(n^{\vr})}, \text{free})\mid \text{NoBad} \right] \\ &\quad \quad \quad\le \e^{Ci_{\star \sss{(q)}}^2 /\ell_n}\!\!\!\sum_{\pi_0 \in \CC^{\sss{(q)}}_{t(n^{\vr_n'})} } \!\!\!\!d_{\pi_0} \cdot\prod_{i=1}^{i_{\star \sss{(q)}}-\ell    }\left(\sum_{\pi_i \in \Lambda_i} \frac{d_{\pi_i} (d_{\pi_i}-1)}{\ell_n}\right), \ea\ee
where we have applied the restriction that is valid under the event $\text{NoBad}$: $\pi_i\in \Lambda_i$, with $\Lambda_i=\{v \in [n]: D_v \le  \widehat u_i^{\sss{(r)}} \}$. Note that we could use again that $\ell_n^\star=\ell_n (1-o_{\Pv}(1))$ by the same argument that was used after formula \eqref{eq:combi}.
Using  \eqref{eq:iid-sum} and \eqref{eq:second-mom} from Claim \ref{cl:tech}, we obtain that
 \be\label{eq:free-k-1} \Ev_{\sss{Y}}[  N_{i_{\star \sss{(r)}}-\ell}(\CC^{\sss{(r)}}_{t(n^{\vr})}, \text{free})\mid \text{NoBad}] \le \widehat u_0^{\sss{(r)}} \cdot \left( \prod_{i=1}^{i_{\star \sss{(r)} }-\ell } \frac{2}{3-\tau}(\widehat u_i^{\sss{(q)}})^{3-\tau}  L_n^{\star \text{up}}(\widehat u_i^{\sss{(q)}}) \right) \e^{2 i_{\star \sss{(q)}}^2/ \ell_n }. \ee
Note that this is similar to \eqref{eq:A11}. Indeed, we again sequentially apply the identity $\widehat u_j^{\sss{(q)}} \cdot (\widehat u_{j+1}^{\sss{(q)}})^{3-\tau}h(\widehat u_j^{\sss{(q)}})=\widehat u_{j+1}^{\sss{(q)}}$, and then \eqref{eq:free-k-1} turns into
\be\label{eq:free-k-2} \Ev_{\sss{Y}}[ N_{i_{\star \sss{(r)}}}(\CC^{\sss{(r)}}_{t(n^{\vr})}, \text{free}) \mid \text{NoBad} ] \le \widehat u_{   i_{\star \sss{(r)} }-\ell }^{\sss{(q)}} \left(\prod_{i=0}^{ i_{\star \sss{(r)} }-\ell -1 } \frac{2}{3-\tau} \frac{L_n^{\star, \text{up}} (\widehat u_{i+1}^{\sss{(q}})}{h(\huiq)} \right)  \e^{2 i_{\star \sss{(q)}}^2/ \ell_n }. \ee
Combining this with Markov's inequality and a union bound gives
\be\ba\label{eq:half-edge-red} \Pv_{\sss{Y}}&\left(\exists \ell\le i_{\star \sss{(q)}}: H(\CC^{\sss{(r)}}_{T_q({\beta_n})-\ell}) \ge  \widehat u_{ i_{\star \sss{(r)} }-\ell}^{\sss{(q)}}\mid \text{NoBad} \right)  \\  &\qquad \qquad\qquad\le\e^{2 i_{\star \sss{(q)}}^2/ \ell_n } \sum_{\ell=0}^{i_{\star \sss{(q)}}}\left(\prod_{i=0}^{ i_{\star \sss{(r)} }-\ell -1 } \frac{2}{3-\tau} \frac{L_n^{\star, \text{up}} (\widehat u_{i+1}^{\sss{(q}})}{h(\huiq)} \right).     \ea\ee
Recall again that $i_{\star \sss{(q)}}$ is a tight random variable  measurable wrt $\CF_{\vr_n'}$ (see \eqref{eq:value_i*-al} and \eqref{eq:istar-b}), and $\ell_n=\Ev[D_n]n$ is of order $n$. Thus the first factor on the rhs is $1+o_{\Pv_{\sss{Y}}}(1)$. Further, in the analysis below \eqref{eq:A11-bound} we have showed that the sum in the rhs of \eqref{eq:half-edge-red} is at most the rhs of \eqref{eq:no-bad-path}. Thus we obtain
\be\label{eq:half-edge-red-2} \Pv_{\sss{Y}}\left(\exists \ell\le i_{\star \sss{(q)}}: H(\CC^{\sss{(r)}}_{T_q({\beta_n})-\ell})\ge  \widehat u_{ i_{\star \sss{(q)} }-\ell}^{\sss{(q)}}\mid \text{NoBad} \right)  \le C\exp\left\{  -C (\log m_q)^\gamma\right\}.     \ee
Now, to see that $\CC^{\sss{(r)}}_{T_r({\beta_n})}$ and $\CC^{\sss{(b)}}_{T_b({\beta_n})}$ are disjoint, we apply the following procedure:
It is easy to see that $H(\CC^{\sss{(r)}}_{T_r({\beta_n})-\ell})$ is maximised at $\ell=0$. Hence, we grow the red cluster first until time $T_r({\beta_n})$, and then stop it. Then, we grow the blue cluster step by step, looking at the pairs of half-edges\footnote{Recall that $\CH(A), H(\CA)$ denote the set and number of half-edges attached to vertices in the set $\CA$.} in $\CH(\CC^{\sss{(b)}}_1), \CH(\CC^{\sss{(b)}}_2), \dots, \CH(\CC^{\sss{(b)}}_{T_b({\beta_n})-1})$, and at each step we check whether any of the half edges paired are actually paired to a red half-edge. If this happens for any time before or at $T_b({\beta_n})-1$, then an early connection happens and the distance is at most  $T_b({\beta_n})+T_r({\beta_n})$. (Note that the distance is $T_r({\beta_n})+i$ if we pair a blue half-edge attached to $\CC^{\sss{(b)}}_{i-1}$ to a red half-edge.)

The probability that there is a connection before or at $t(n^{\vr_n'})$ is of the same order of magnitude as the probability that there is a connection at time $t(n^{\vr_n'})$, since the total degree in the whole BP is the same order of magnitude as the total degree in the last generation, thus it is enough to investigate
the probability that $\CH(\CC^{\sss{(b)}}_{T_b({\beta_n})-\ell})$ connects to $\CH(\CC^{\sss{(r)}}_{T_r({\beta_n})})$ for some $\ell\le i_{\star \sss{(b)}}$. This probability is at most
\be\label{eq:conn-ell}   \Pv_{\sss{Y}}\left(\CC^{\sss{(b)}}_{T_b({\beta_n})-\ell} \leftrightarrow \CC^{\sss{(r)}}_{T_r({\beta_n})} \mid  H(\CC^{\sss{(b)}}_{T_b({\beta_n})-\ell}),  H(\CC^{\sss{(r)}}_{T_r({\beta_n})}) \right) \le \frac{H(\CC^{\sss{(b)}}_{T_b({\beta_n})-\ell}) H(\CC^{\sss{(r)}}_{T_r({\beta_n})})}{\ell_n(1+o(1))}.  \ee
Let us write $\CD_n:=\{H(\CC^{\sss{(b)}}_{T_b({\beta_n})-\ell})\le \widehat u_{ i_{\star \sss{(b)} }-\ell},  \forall \ell \in [i_{\star \sss{(b)} }]\}$. Then by \eqref{eq:half-edge-red}, $\CD_n$ happens $\Pv_{\sss{Y}}$-whp. Using this, we sum the bound on the rhs over $\ell\in [i_{\star \sss{(b)}}]$, using \eqref{eq:uiq-asymp}, to obtain that
\be \label{eq:early-error} \Pv_{\sss{Y}}( \CC^{\sss{(r)}}_{T_r({\beta_n})} \cap \CC^{\sss{(b)}}_{T_b({\beta_n})}\neq \varnothing \mid \CD_n ) \le \frac{\widehat u_{ i_{\star \sss{(r)} }}^{\sss{(r)}} }{\ell_n}\sum_{\ell=1}^{ i_{\star \sss{(b)}} }\widehat u_{ i_{\star \sss{(b)} }-\ell}^{\sss{(q)}}   \lesssim  \frac{n^{{\beta_n}(\tau-2)^{\bnr({\beta_n})}}}{\ell_n} \sum_{\ell=1}^{i_{\star \sss{(b)}}}n^{{\beta_n}(\tau-2)^{\bnb({\beta_n})+\ell}}, \ee
where we recall that $\lesssim$ means inequality up to multiplicative factors that are of order at most $\exp\{ (\log n)^\theta\}$ for some $\theta\in [0,1)$, as in Definition \ref{def:sim}.
 Since $\ell_n=\Ev[D_n] n$ is of order $n$, the exponent of $n$ in the dominant term in the numerator is
 \[ {\beta_n}((\tau-2)^{\bnr({\beta_n})}+ (\tau-2)^{\bnb({\beta_n})+1}) < 1, \]
 as long as ${\beta_n}<1/(\tau-1)$, since $\bnb({\beta_n}) , \bnr({\beta_n}) \in [0,1)$. When ${\beta_n}=1/(\tau-1)$, the strict inequality still holds as long as $(\bnb({\beta_n}),\bnr(a{\beta_n}))\neq(0,0)$, an event that happens with probability $1$ under Assumption \ref{assu:pointmass}, since $b_n^{\sss{(q)}}({\beta_n})=0$ is only possible if $Y_q^{\sss{(n)}}$ takes values in a measure $0$ discrete set, see \eqref{def:ti-bi}. The probability of this event tends to $0$ under Assumption \ref{assu:pointmass}.

 For  large enough $n$ the multiplicative factors hidden in the $\sim$ sign on the rhs of \eqref{eq:early-error} are negligible, thus the rhs of \eqref{eq:early-error} tends to zero with $n$.
This finishes the proof of the proposition.
\end{proof}
\section{Distances in the graph}\label{s:main-proof}
\begin{proof}[Proof of Theorem \ref{thm:distances}]
The lower bound is easier, since we can use the first moment method (i.e., Markov's inequality) on the number of paths emanating from $H(\mathcal R_{T_r({\beta_n})})$ and $H(\mathcal B_{T_b({\beta_n})})$ and connecting to each other to obtain a lower bound.
Thus, let us start counting paths of length $z+1$ (that is, $z$ vertices in between) connecting $\CH(\CC^{\sss{(q)}}_{T_q({\beta_n})})$, for $q\in\{r,b\}$. Starting with \eqref{eq:combi}, the restriction now is that $\pi_0 \in \CC^{\sss{(r)}}_{T_r({\beta_n})}$, while $\pi_{z+1} \in \CC^{\sss{(b)}}_{T_b({\beta_n})}$, and there are no restrictions on the in-between vertices. Thus, we obtain a similar formula as in \eqref{eq:A5}, except that the restrictions on the vertices $\pi_i$ are now different:
\be\ba\label{eq:free-from-end} \Ev_{\sss{Y}}&[N_z(\mathcal C^{\sss{(r)}}_{T_r({\beta_n})}, \mathcal C^{\sss{(b)}}_{T_b({\beta_n})})\mid \text{NoBad}] \\ &\le \e^{Cz^2 /\ell_n}\!\!\!\!\!\sum_{\pi_0 \in \CC^{\sss{(r)}}_{T_r({\beta_n})} } \!\!\!\!d_{\pi_0} \cdot\prod_{i=1}^{z }\left(\sum_{\pi_i \in [n]} \frac{d_{\pi_i} (d_{\pi_i}-1)}{\ell_n^\star}\right)\cdot \sum_{\pi_{z+1} \in \CC^{\sss{(b)}}_{T_b({\beta_n})} } \frac{d_{\pi_{z+1}}}{\ell_n^\star}.
\ea \ee
We have to check that in this case $\ell_n^\star=\ell_n(1-o_{\Pv}(1))$ is still satisfied. This follows from the proof of Proposition \ref{lem:no-early-meeting} and the fact that Lemma \ref{lem:badpaths} holds and thus the total number of used half-edges can be bounded from above as the sum of $\sim\huiq$ over $i\le i_{\star\sss{(q)}}$ and $q\in\{r,b\}$.

The first and last sum are handled by \eqref{eq:half-edge-red-bound} in Proposition \ref{lem:no-early-meeting}. We would like to estimate the sums within the product sign on the rhs. For this, recall that $D_n^\star$ stands for the (degree-1) of a vertex that a uniformly chosen half-edge in $\ell_n$ is attached to. $D_n^\star$ then follows distribution $F_n^\star$, see \eqref{def:size-biased1}.
By Claim \ref{cl:tech}, \eqref{eq:second-mom},  the size-biased empirical moment of $D_n^\star$ is
\be\label{eq:edn}\Ev[D_n^\star]=\sum_{\substack{ v\in[n]\\ d_v\le n^{{\beta_n}}}} \frac{d_v (d_v-1)}{\ell_n}\le n^{{\beta_n}(3-\tau)} L_n^{\star, \text{up} }(n^{{\beta_n}})\lesssim n^{{\beta_n}(3-\tau)}. \ee
Applying this inequality on the rhs of \eqref{eq:free-from-end}
yields that 
\be\label{eq:k-free}\Ev_{\sss{Y}}\left[N_z(\mathcal C^{\sss{(r)}}_{T_r({\beta_n})}, \mathcal C^{\sss{(b)}}_{T_b({\beta_n})})\mid \text{NoBad}\right]\lesssim n^{-1}n^{{\beta_n} (\tau-2)^{b_n^{\sss{(r)}}({\beta_n})}} n^{{\beta_n} (\tau-2)^{b_n^{\sss{(r)}}({\beta_n})}} n^{z {\beta_n} (3-\tau)}.\ee
By Markov's inequality, the probability that there is at least one path connecting  $\mathcal C^{\sss{(r)}}_{T_r({\beta_n})}, \mathcal \mathcal C^{\sss{(b)}}_{T_b({\beta_n})}$ with $z+1$ edges can be bounded from above by the expected number of connections, so we obtain the bound
\be\label{eq:kell-connect} \Pv_{\sss{Y}}\left(\mathrm{d}_G(\mathcal C^{\sss{(r)}}_{T_r({\beta_n})}, \mathcal C^{\sss{(b)}}_{T_b({\beta_n})}) \le z+1 \mid \text{NoBad}\right)\lesssim n^{-1+{\beta_n} \left((\tau-2)^{b_n^{\sss{(r)}}({\beta_n})} + (\tau-2)^{b_n^{\sss{(b)}}({\beta_n})}+z(3-\tau)\right)} .\ee
So, the two clusters  are $\Pv_{\sss{Y}}$-whp disjoint as long as this quantity tends to zero. The smallest value of $z\in \N$ when the rhs of \eqref{eq:kell-connect} does \emph{not} tend to zero is
\be\label{eq:zstar-def}\ba z^\star_n&:= \inf\left\{z\in \N: (\tau-2)^{b_n^{\sss{(r)}}({\beta_n})} + (\tau-2)^{b_n^{\sss{(b)}}({\beta_n})}+ z(3-\tau) > 1/{\beta_n} \right \}\\
&\ =\left\lceil    \frac{1/{\beta_n}-(\tau-2)^{b_n^{\sss{(r)}}({\beta_n})}-(\tau-2)^{b_n^{\sss{(b)}}({\beta_n})}}{3-\tau} \right\rceil.  \ea\ee
Since $z_n^\star$ counts the number of \emph{vertices} needed between $\CC^{\sss{(r)}}_{T_r({\beta_n})}$ and $\CC^{\sss{(b)}}_{T_b({\beta_n})}$, and we would like to count the number of \emph{edges}\footnote{The number of edges on a path with $k$ in-between vertices is $k+1$.}, as long as the number of edges between $\CC^{\sss{(r)}}_{T_r({\beta_n})}$ and $\CC^{\sss{(b)}}_{T_b({\beta_n})}$ is at most $z_n^\star$, the bound in \eqref{eq:kell-connect} tends to zero as $n\to \infty$. This in turn means that $\Pv_{\sss{Y}}$-whp there is no path of length $T_r({\beta_n})+T_b({\beta_n})+z_n^\star$ connecting $v_r, v_b$.
Thus, we obtain that $\Pv_{\sss{Y}}$-whp:
\be\label{eq:dist-lower-1} \mathrm d_G(v_r, v_b)\ge T_r({\beta_n})+ T_b({\beta_n})+\left\lceil   \frac{1/{\beta_n} -(\tau-2)^{b_n^{\sss{(r)}}({\beta_n})}-(\tau-2)^{b_n^{\sss{(b)}}({\beta_n})}}{3-\tau} \right\rceil +1. \ee
This completes the proof of the lower bound on $\mathrm{d}_G(v_r, v_b)$ in Theorem \ref{thm:distances}.
For the upper bound on $\mathrm{d}_G(v_r, v_b)$, we expect the existence of a path of length as the rhs of \eqref{eq:dist-lower-1}. To be able to show this, we apply the second moment method. Recall that we have already constructed paths of length $T_q({\beta_n})$ between $v_q$ and a vertex $v_q^\star$, where $d_{v_q^\star}$ as in \eqref{eq:ui*}.
We calculate the expected number and variance of paths of length $z^\star+1$ connecting  $v_r^\star$ to $v_b^\star$, with certain restrictions. Namely, the formula for the variance turns out to be simpler and easier if we count paths where the $i$th vertex on the path falls into a different (and disjoint) set for all $i\ge 0$. The reason why the variance is easier to calculate is that two possible paths can overlap only in fairly simple ways (see \cite[Figure 7]{BarHofKom14}).

Note that since $\bnb, \bnr \in [0,1)$,
\be\label{eq:mbetan} z^\star_n+2 \le \left\lceil \frac{1/{\beta_n}-2(\tau-2)}{3-\tau}\right\rceil+2=:M_{\beta_n}.\ee
Now we divide the set of vertices
into $M_n$ many roughly equal disjoint sets. We denote the $i$th set by $\Delta_i$.  By roughly equal we mean that the following inequalities hold for some $0<c_1< c_2<\infty$
\begin{align}\label{eq:nu-new} \nu_i^{\text{new}}:=\sum_{v\in \Delta_i} \frac{d_i (d_i-1)}{\ell_n} &\in \left[\frac{c_1}{M_{\beta_n}}, \frac{c_2}{M_{\beta_n}}\right] \cdot  \Ev[D_n^\star] \\
\label{eq:kappa-new} \kappa_i^{\text{new}}:=\sum_{v\in \Delta_i} \frac{d_i (d_i-1)(d_i-2)}{\ell_n}& \in \left[\frac{c_1}{M_{\beta_n}}, \frac{c_2}{M_{\beta_n}}\right] \cdot \Ev[D_n^\star(D_n^\star-1)].
\end{align}
This can be done as long as we distribute the vertices in the intervals $[n^{{\beta_n}(1-\ve)}, n^{{\beta_n}}]$ in an approximately uniform way.
We require that $v_r^\star\in \Delta_0$ and $v_b^\star\in \Delta_{M_{\beta_n}}$ in the partitioning.
We will count the paths on vertices $(v_r^\star:=v_0, v_1, \dots, v_{z}, v_{z+1}:=v_b^\star)$ that satisfy the property that the $j$th vertex falls into $\Delta_j$ when $j\le z/2$ and it falls into $\Delta_{M_{\beta_n}+1-j}$ when $j>z/2$.\footnote{This somewhat weird containment is needed since $z+2<M_{\beta_n}$ can also occur.}
As a result of the restriction, the proof of \cite[Lemma 7.1, (7.5)]{BarHofKom14} applies word by word.
This proof bounds the expected number and variance of restricted paths between vertices $a, b$ with $k$ vertices in between.
In this proof, we only need to replace $\nu_i$ by $\nu_i^{\text{new}}$ as in \eqref{eq:nu-new} and $\kappa_i$  by $\kappa_i^{\text{new}}$ as in \eqref{eq:kappa-new}. In our case the degree of vertex $a:=v_r^\star$ is $d_a:=u_{i_{\star \sss{(r)}}}^{\sss{(r)}}$ while the degree of vertex $b:=v_b^\star$ is $d_b:=u_{i_{\star \sss{(b)}}}^{\sss{(b)}}$.

First we need a lower bound on the expected number of paths of length $z+1$ between $v_r^\star, v_b^\star$. We expect it to be of a similar order of magnitude as the upper bound in \eqref{eq:k-free}. The differences between a lower and an upper bound (see \eqref{eq:free-from-end}) are the following:
(1) the first and last factor in \eqref{eq:free-from-end} in lower bound changes, since  we only count the degree of $v_q^\star$,
(2) in the middle factor  in \eqref{eq:free-from-end} we have to apply the restriction that $\pi_j\in\Delta_{j}$ for $j<z/2$ while $\pi_j\in \Delta_{M_{\beta_n}-j}$ for $j> z/2$ instead of summing over all vertices in $[n]$.

Using \eqref{eq:degree-hub}, combined with \eqref{eq:nu-new}
yields that we have the upper and lower bound
\be\label{eq:expected-path-1} \Ev_{\sss{Y}}[N_{z_n^\star}(v_r^\star, v_b^\star)] \ {\buildrel \in \over \sim}\    \frac{n^{{\beta_n} (\tau-2)^{b_n^{\sss{(b)}}({\beta_n})}} n^{{\beta_n} (\tau-2)^{b_n^{\sss{(r)}}({\beta_n})}}}{\ell_n} n^{z {\beta_n} (3-\tau)} \cdot \left[ (c_1/M_{\beta_n})^{z}, (c_2/M_{\beta_n})^{z}\right], \ee
where ${\buildrel \in \over \sim}$ means containment in an interval, where an additional factor of order at most $\exp\{\pm (\log n^{\beta_n})^\theta \}$ for some $\theta<1$, as defined in Definition \ref{def:sim}, might multiply the prefactors of the interval. This additional factor comes from $v_q^\star$ not being precisely equal to $n^{{\beta_n}(\tau-2)^{\bnq({\beta_n})}}$, as well as $\Ev[D_n^\star]$ in \eqref{eq:edn} is not precisely equal to $n^{{\beta_n}(3-\tau)}$.

By the definitions of $M_{\beta_n}$ and $z_n^\star$ in \eqref{eq:mbetan} and \eqref{eq:zstar-def} and the bound $\beta_n\ge (\log n)^{-\gamma}$ for some $\gamma< 1$, for $i\in\{1,2\}$, for all $z\le z_n^\star$,
\[ (c_i/M_{\beta_n})^{z}\ge \exp\left\{ -\gamma \log \log n \cdot (\log n)^\gamma+\log c_i (\log n)^\gamma\right\} \ge \exp\{ -(\log n)^{\theta}\} \] 
for any $\theta\in(\gamma,1)$ and $n$ sufficiently large,
so the lower bound on $\Ev_{\sss{Y}}[N_{z^\star}(v_r^\star, v_b^\star)]$ in \eqref{eq:expected-path-1} fits the Definition \ref{def:sim} when we use that $\ell_n$ is of order $n$. Combining this with the upper bound in \eqref{eq:k-free}, we arrive at the desired
\be\label{eq:expected-path-2} \Ev_{\sss{Y}}[N_{z}(v_r^\star, v_b^\star)] \sim n^{-1} n^{{\beta_n} (\tau-2)^{b_n^{\sss{(b)}}({\beta_n})}} n^{{\beta_n} (\tau-2)^{b_n^{\sss{(r)}}({\beta_n})}} n^{z {\beta_n} (3-\tau)}. \ee

The smallest value of $z$ for which this expression tends to infinity (and not to $0$) as $n\to \infty$ is precisely $z_n^\star$ defined in \eqref{eq:zstar-def}.

By Chebyshev's inequality,
\be\label{eq:cheb} \Pv_{\sss{Y}}(N_{z_n^\star}(v_r^\star, v_b^\star ) = 0) \le \frac{\Var_Y[N_{z_n^\star}(v_r^\star, v_b^\star) ]}{\Ev_{\sss{Y}}[N_{z_n^\star}(v_r^\star, v_b^\star) ]^2}\ee
 and thus, to show the $\Pv_{\sss{Y}}$-whp existence of at least $1$ path of length $z_n^\star+1$, it is enough to show that the variance is of smaller order than the expectation squared.

Thus next we calculate the variance of $N_{z_n^\star}(v_r^\star, v_b^\star)$. Here we rely on the proof of \cite[Lemma 7.1]{BarHofKom14}. Unfortunately here the appendix of the paper does not state the variance independently of the statement of \cite[Lemma 7.1]{BarHofKom14}. However, one can word-by-word follow the derivation of the variance, starting from the formula before \cite[(A.19)]{BarHofKom14} until  \cite[(A.25)]{BarHofKom14}. The first occurence where the setting of this paper deviates from that paper is \cite[(A.26)]{BarHofKom14}: in this equation, in the last error factor $(1+\ve)^k$, $\ve$ can be set to $0$. This can be done since the events in $\CE'_n$ defined in \cite[(A.15)]{BarHofKom14} hold for $\ve=0$ under Assumption \ref{assu:degree-dist}.

A crucial observation in the proof there is the inequality in \cite[(A.27)]{BarHofKom14} stating that
$\kappa_i/\nu_i^2 \le \kappa_1/\nu_1^2$. This allows us to replace every occurrence of $\kappa_i/\nu_i^2$ by $\kappa_1/\nu_1^2$ in that proof. Note that this inequality in our case is not valid, however, with our choice of the $\Delta_i$ it is true that
\[ \frac{\kappa_i^{\text{new}}}{(\nu_i^{\text{new}})^2} \le C \frac{\kappa_1^{\text{new}}}{(\nu_1^{\text{new}})^2}, \]
with $C:=c_2/c_1^2$ from \eqref{eq:nu-new}. Thus when following the proof, we are allowed to replace every occurrence of $\kappa_i/\nu_i^2$ by $C\kappa_1^{\text{new}}/(\nu_1^{\text{new}})^2$. Similarly, $1/\nu_{k-1}$ can be replaced by $\wit C/\nu_{1}^{\text{new}}$.
If we do this replacement in  \cite[(A.28)]{BarHofKom14}, and thus the geometric sums in the formula before \cite[(A.29)]{BarHofKom14} yield that in \cite[(A.29)]{BarHofKom14}
we should replace the two occurrences of $\nu_{k-1}/(\nu_{k-1}-1)$ in \cite[(A.29)]{BarHofKom14} by
\[  \frac{\nu_{1}^{\text{new}}}{\nu_{1}^{\text{new}}-\wit C} \]
 From here on, the arguments work word-by-word again and thus we obtain that the arguments as well as the formulas until \cite[(A.32)]{BarHofKom14} remain all true when implementing these modifications.

Ultimately, the final estimate for the variance is the sum of the rhs of \cite[(A.19),(A.20), (A.29) and (A.32)]{BarHofKom14}  with the addition of the prefactor $C$ at places where
one sees $\kappa_1/\nu_1^2$ and modifying $\nu_{k-1}$ in numerators to $\nu_1$ and $1/(\nu_{k-1}-1)$ to $1/(\nu_{1}-\wit C)$. Using that $d_{v_q^\star}\ge u_{i_{\star \sss{(q)}}}^{\sss{(q)}}$, we thus obtain that (ignoring the `new' superscript everywhere for brevity now):
\be\ba\label{eq:var-est}
&\Var_Y[N_{z_n^\star}(v_r^\star, v_b^\star)] \le \Ev_{\sss{Y}}[N_{z_n^\star}(v_r^\star, v_b^\star)] \\
&\ \ +\overline{\Ev_{\sss{Y}}[ N_{z_n^\star}(v_r^\star, v_b^\star)]}^2 \left(\frac{\nu_1}{\nu_1-\wit C} \frac{C \kappa_1}{\nu_1^2} \Big( \frac{1}{u_{i_{\star \sss{(r)}}}^{\sss{(r)}}}+ \frac{1}{u_{i_{\star \sss{(b)}}}^{\sss{(b)}}}\Big)\!+\!  \frac{\nu_1^2}{(\nu_1-\wit C)^2 }\frac{C^2 \kappa_1^2}{\nu_1^4}\frac{1}{u_{i_{\star \sss{(r)}}}^{\sss{(r)}}u_{i_{\star \sss{(b)}}}^{\sss{(b)}}}\right.\\
&\ \ \left. +\,\frac{8(z_n^\star)^2}{\ell_n} + \left(1+ \frac{C \kappa_1 \nu_1}{\nu_1^2 u_{i_{\star \sss{(r)}}}^{\sss{(r)}} } \right)\left(1+ \frac{C \kappa_1 \nu_1}{\nu_1^2 u_{i_{\star \sss{(b)}}}^{\sss{(b)}} }\right) \frac{z^\star}{\nu_1-C}\left( 2\frac{(z_n^\star)^2 \nu_1}{\ell_n} \frac{C^2 \kappa_1^2}{\nu_1^4} \right) \right) 
\ea\ee
where $\overline{\Ev_{\sss{Y}}[ N_{z_n^\star}(v_r^\star, v_b^\star)]}$ stands for the upper bound on  $\Ev_{\sss{Y}}[N_{z_n^\star}(v_r^\star, v_b^\star)]$ in \eqref{eq:k-free}. Recall that $\overline{\Ev_{\sss{Y}}[ N_{z_n^\star}(v_r^\star, v_b^\star)]}$ and $\Ev_{\sss{Y}}[ N_{z_n^\star}(v_r^\star, v_b^\star)]$ are all given by \eqref{eq:expected-path-2} up to smaller order correction terms.
So, in order to show that the rhs of \eqref{eq:cheb} tends to zero, it is enough to analyse the factor multiplying $\overline{\Ev_{\sss{Y}}[ N_{z_n^\star}(v_r^\star, v_b^\star)]}^2$ in \eqref{eq:var-est} and show that it tends to zero as $n\to\infty$. We carry out this now.

By the same method as the one in Claim \ref{cl:tech}, (i.e., using Karamata's theorem)
it is not hard to show that
\be\label{eq:kappa-order} \Ev[D_n^\star (D_n^\star-1)]=\sum_{\substack{ v\in[n]\\ d_v\le n^{\beta_n}}} \frac{d_v (d_v-1)(d_v-2)}{\ell_n}\le n^{{\beta_n}(4-\tau)} C L_n^{\star, \text{up} }(n^{\beta_n})\lesssim n^{{\beta_n}(4-\tau)}. \ee
This together with \eqref{eq:kappa-new} implies that $\kappa_1\sim n^{{\beta_n}(4-\tau)}$. From  \eqref{eq:edn} and \eqref{eq:nu-new}, $\nu_1\sim n^{{\beta_n}(3-\tau)}$,  and finally from \eqref{eq:ui*}, (see also \eqref{eq:degree-hub}) $u_{i_{\star \sss{(q)}}}^{\sss{(q)}}\sim
n^{{\beta_n}(\tau-2)^{b_n^{\sss{(q)}}({\beta_n}) }}$.
This implies that for $q\in\{r,b\}$
\be\label{eq:first-term-1} \frac{\kappa_1}{\nu_1^2}\frac{1}{u_{i_{\star \sss{(q)}}}^{\sss{(q)}}}\sim n^{{\beta_n}((\tau-2)-(\tau-2)^{b_n^{\sss{(q)}}({\beta_n}) })} \to 0, \ee
as $n\to \infty$, since $\bnb({\beta_n}), \bnr({\beta_n}) <1$, and the error terms hidden in the $\sim$ sign are at most $\exp\{\pm(\log n^{\beta_n})^\theta\}$ for $\theta<1$, and are thus of smaller order of magnitude.
Thus, both the first and second term multiplying $\overline{\Ev_{\sss{Y}}[ N_{z^\star}(v_r^\star, v_b^\star)]}$ in \eqref{eq:var-est} tends to zero as $n\to \infty$. Note that for \eqref{eq:second-term-1} to tend to zero it is crucial that $d_{v_q^\star}>n^{\beta_n(\tau-2)}$, i.e., that $v_q^\star$ is a hub.
When distributing the product in the second line of  \eqref{eq:var-est}, using that $\nu_1/(\nu_1-\wit C)$ is a constant factor, we see that the main contribution comes from the terms
\be\label{eq:second-term-1}\frac{\kappa_1}{\nu_1 u_{i_{\star \sss{(q)}}}^{\sss{(q)}}} \cdot \frac{1}{\ell_n}\frac{\kappa_1^2}{\nu_1^4} \sim n^{{\beta_n}(1-(\tau-2)^{b_n^{\sss{(q)}}({\beta_n}) })}\cdot n^{-1} \cdot n^{2{\beta_n}(\tau-2)} = n^{{\beta_n}(\tau-1+\tau-2-(\tau-2)^{b_n^{\sss{(q)}} ({\beta_n})})-1}. \ee
Note that since $b_n^{\sss{(q)}}({\beta_n})\in[0,1)$, $(\tau-2)-(\tau-2)^{\bnq}\le 0$, and thus the exponent is always negative when ${\beta_n}<1/(\tau-1)$. When ${\beta_n}=1/(\tau-1)$, the exponent is always nonpositive and equals $0$ if and only if $b_n^{\sss{(q)}}({\beta_n})=0$. This is only possible if $Y_q^{\sss{(n)}}$ takes values in a measure $0$ discrete set, see \eqref{def:ti-bi}. The probability of this event tends to $0$ under Assumption \ref{assu:pointmass}.

Combining the estimates in \eqref{eq:first-term-1} and \eqref{eq:second-term-1}, we obtain that the variance of $N_{z_n^\star}(v_r^\star, v_b^\star)$ is of smaller order than its expectation squared, hence the rhs of \eqref{eq:cheb} tends to zero. This establishes that whp there is a path of length $z_n^\star+1$ connecting $v^\star_r$ to $v_b^\star$. Thus, we obtain the existence of a path of length as in \eqref{eq:dist-beta-small-p2}.
This proves the upper bound on $\mathrm{d_G}(v_r, v_b)$ and thus completes the proof of Theorem \ref{thm:distances}.
\end{proof}
\section{Extensions and by-products}\label{s:extension}

In this section  prove Theorems \ref{thm:structure-1} and \ref{thm:structure-2} and sketch the proof of Observation \ref{obs:factor}.

\begin{proof}[Proof of Theorem \ref{thm:structure-1}]
The proof of the first statement of the theorem, that is, \eqref{eq:alpha-truncation} follows from the proof of Theorem \ref{thm:distances} in Section \ref{s:main-proof}.
Recall that we write $\Lambda_{\le z}=\{w\in [n]: d_w \le n^{z}\}$.  Note that in this case, Claim \ref{cl:tech} yields that the empirical second moment restricted to degrees at most $n^{\wit{\beta_n}}$ is $\lesssim n^{\wit{\beta_n}(3-\tau)}$. The proof of Theorem \ref{thm:structure-1} is even simpler than the one in Section \ref{s:main-proof}: here, one does not need to estimate the number of half-edges attached to $v_1, v_2$, since these are given, so there is no need to condition on the sigma algebra $\CF_{\vr_n'}$  or on the good event ($\text{NoBad}$) either. Thus, below in this proof, we can use the `usual' probability measure $\Pv$ instead of $\Pv_Y$.
Let us write $\CP_{v_1, v_2}(z)$ for a path of length $z$ connecting $v_1, v_2$. Counting paths restricted to the set $\Lambda_{\wit{\beta_n}}$, the restriction in the sum in the middle factor in \eqref{eq:free-from-end} is that $\pi_i\in \Lambda_{\le \wit{\beta_n}}$, thus, \eqref{eq:k-free} turns into
 \be\label{eq:k-free-3} \Ev[\#\{\CP_{v_1, v_2}(z) \in \Lambda_{\le\wit{\beta_n}}\} ] \lesssim n^{-1} n^{x_1\wit{\beta_n}+x_2\wit{\beta_n}} n^{z \wit{\beta_n}(3-\tau)} \ee
and \eqref{eq:kell-connect} becomes
\be \Pv\left( \exists \CP_{v_1, v_2}(z) \in \Lambda_{\le \wit{\beta_n}}: |\CP_{v_1, v_2}|\le z+1 \right)\lesssim n^{-1} n^{\wit{\beta_n}(x_1+x_2 + z (3-\tau))}. \ee
 From here, the proof of the upper bound in Section \ref{s:main-proof} can be repeated with ${\beta_n}$ replaced by $\wit{\beta_n}$, $(\tau-2)^{\bnq(\beta_n)}$ replaced by $x_j$, yielding \eqref{eq:alpha-truncation}. The proof of the lower bound in Section \ref{s:main-proof} is again valid word-by-word.
 It is important to note that the second moment method in \eqref{eq:cheb} works only if the variance is of smaller order than the expectation squared, which, in turn, is equivalent to the quantities in \eqref{eq:first-term-1} and \eqref{eq:second-term-1} tending to zero, which is the case whenever $x_j > \tau-2$. (Thus, for non-hub vertices the method does not work.)
 Next we show \eqref{eq:nof-shortest} and \eqref{eq:convergence-nof-paths}.
Note that \eqref{eq:convergence-nof-paths} is equivalent to
\be\label{eq:wrong-interval} \Pv\left( \{\# \CP_{v_1, v_2}^\star \in \Lambda_{\le \wit{\beta_n}}\} \notin [ n^{\wit{\beta_n} f^u(\wit{\beta_n}, x_1, x_2) (1-\ve) }, n^{\wit{\beta_n} f^u(\wit{\beta_n}, x_1, x_2) (1+\ve) }]   \right) \to 0.\ee
We prove this statement using Chebyshev's inequality. Let us shortly write 
\be\label{eq:zetan} \zeta_n:=\{\# \CP_{v_1, v_2}^\star \in \Lambda_{\le \wit{\beta_n}}\}.\ee First, when we write $\underline{\Ev[\zeta_n]}, \overline{\Ev[\zeta_n]}$ for the lower and upper bounds on $\Ev[\zeta_n]$, respectively, then these are handled in  \eqref{eq:expected-path-2}, and equal $\sim n^{\wit{\beta_n} f^u(\wit{\beta_n}, x_1, x_2)}$ by elementary calculations using the value $z_n^\star$ from \eqref{eq:zstar-def} and $f^u$ from \eqref{def:c}.

Next, $\Var[ \zeta_n]$ is handled in \eqref{eq:var-est}, where consecutively in \eqref{eq:first-term-1} and \eqref{eq:second-term-1} it is established that
\be \Var[ \zeta_n] \le C \overline{\Ev[\zeta_n]}^2  \max_{j=1,2}\max\left\{ \frac{\kappa_1}{\nu_1^2 d_{v_j}} ,\frac{\kappa_1^3}{\nu_1^5 \ell_n d_{v_j}} \right\}.\ee
 Comparing the rhs of \eqref{eq:first-term-1} to the rhs of \eqref{eq:second-term-1} with ${\beta_n}$ replaced by $\wit{\beta_n}$,
it is elementary to check that the dominating expression is the rhs of \eqref{eq:first-term-1} unless $\wit{\beta_n}=1/(\tau-1)$, in which case both terms are of the same order.
Nevertheless, we arrive to
\be \max_{j=1,2} \max\left\{ \frac{\kappa_1}{\nu_1^2 d_{v_j}}, \frac{\kappa_1^3}{\nu_1^5 \ell_n d_{v_j}} \right\} \sim  n^{-\wit{\beta_n} (\min(x_1, x_2) - (\tau-2) )}. \ee
Note that this is the point where it becomes clear why the assumption $x_1, x_2 > \tau-2$ was necessary: only in this case can we expect any concentration of the variable $\zeta_n$.
Chebyshev's inequality yields that for any $c(n,\ve)$ that depends on $n$ and some $\ve>0$ to be chosen later
\be\label{eq:cheb-222}\ba   \Pv\left( |\zeta_n - \Ev[\zeta_n]|\ge  \Ev[\zeta_n] c(n,\ve) \right) &\le \frac{\Var( \zeta_n) }{\Ev[\zeta_n]^2 c(n,\ve)^2 }\lesssim  \frac{\overline{\Ev[\zeta_n]}^2 }{\underline{\Ev[\zeta_n]}^2} \frac{1}{c(n,\ve)^2}n^{-\wit{\beta_n} (\min(x_1, x_2) - (\tau-2) )} \\
&\sim\frac{1}{\wit{\beta_n}^2}\frac{1}{c(n,\ve)^2}n^{-\wit{\beta_n} (\min(x_1, x_2) - (\tau-2) )}, \ea\ee
where we have used that $\overline{\Ev[\zeta_n]}/\underline{\Ev[\zeta_n]}$ is $1/\wit{\beta_n}$ times a factor that can be merged in the $\sim$ sign. By the assumption that  ${\beta_n} (\log n)^\gamma\to \infty$,  for some $\gamma<1$, setting \[c(n,\ve):=n^{-(1-\ve)\wit{\beta_n} (\min(x_1, x_2) - (\tau-2) )/2}\to 0,\] the rhs tends to zero for any fix $\ve>0$. Note that we only have upper and lower bounds on the expected value, thus we obtain that
\be\label{eq:concentration-nof-paths} \Pv(\zeta_n \notin [(1-c(n,\ve))\underline{\Ev[\zeta_n]} , (1+c(n,\ve))\overline{\Ev[\zeta_n]}] )\to 0.\ee
The interval in \eqref{eq:wrong-interval} is certainly wider than the one excluded here. This finishes the proof of \eqref{eq:convergence-nof-paths}.   Further note that both the lower as well as the upper ends of the interval in \eqref{eq:concentration-nof-paths} are still $\sim n^{\wit{\beta_n} f^u(\wit{\beta_n}, x_1, x_2)}$. This finishes the proof of \eqref{eq:nof-shortest}.

We turn to the proof of \eqref{eq:no-low-degree}, which is essentially Markov's inequality.
 Indeed, when we require for some $\delta<1$ that one of the vertices on a path must fall in $\Lambda_{\le\delta\wit{\beta_n}}$ (but not its location), we obtain that the expected number of  such paths connecting $v_1, v_2$ is at most
\be\label{eq:z-modified}\Ev\left[ \#\{\CP_{v_1, v_2}(z): \exists i\le z, \pi_i \in \Lambda_{\le\delta\wit{\beta_n}}\} \right]\lesssim z n^{-1} n^{\wit{\beta_n}(x_1+x_2 + (z-1)(3-\tau) + \delta (3-\tau))}.\ee
 Let us denote by $z_n^\star(\delta)$ the smallest $z$ for which this quantity does not tend  to zero, which is exactly
 \[ z_n^\star(\delta)=  \left\lceil    \frac{1/\wit{\beta_n}-x_1-x_2}{3-\tau}+(1-\delta) \right\rceil\]
 On the other hand, without any restriction, the shortest path uses
 \[ z_n^\star(1)=\left\lceil    \frac{1/\wit{\beta_n}-x_1-x_2}{3-\tau} \right\rceil\]
 many in-between vertices, by \eqref{eq:alpha-truncation}. As long as $\delta$ is such that $z^\star(\delta)> z^\star(1)$, the rhs of \eqref{eq:z-modified} tends to $0$ for $z=z^\star(1)$. Thus, there will be no connecting paths of length $z^\star(1)$ that have a vertex in $\Lambda_{\le \delta\wit{\beta_n}}$.
 Finally, the largest $\delta$ that we can achieve, $\delta_{\max}:=\sup\{ \delta: z^\star(\delta)>z^\star(1)\}$ is precisely the lower fractional part of the expression within the upper-integer-part in $z^\star(1)$, in other words, $\delta_{\max}=f^\ell(\wit{\beta_n}, x_1, x_2)$,
 establishing \eqref{eq:no-low-degree}.
\end{proof}
\begin{proof}[Sketch proof of Observation \ref{obs:factor}]
From the proof of Theorem \ref{thm:structure-1} it follows that all shortest paths use degree at least $\sim n^{\wit \beta_n f^\ell(\wit \beta_n,x_1,x_2)}$ while the Chebyshev's inequality in \eqref{eq:cheb-222} shows that the  number of shortest paths is $\sim n^{\wit \beta_n f^u(\wit\beta_n, x_1,x_2)}$. Multiplying these two together yields one part of the observation. The full statement is finished when we show that there is at least one shortest path that actually uses a vertex with degree $\sim n^{\wit \beta_n f^\ell(\wit \beta_n,x_1,x_2)(1+\ve)}$ for arbitrary small $\ve>0$. This can be done using Chebyshev's inequality again in the same way as for $\zeta_n$ in \eqref{eq:zetan}, now counting paths with at least one vertex in $\Lambda_{\le \wit\beta_nf^\ell(\wit\beta_n, x_1,x_2)(1+\ve)}$ and the rest of the vertices in $\Lambda_{\le \wit\beta_n}$. In this case, the variance vs expectation squared method carries through the same way.
\end{proof}
\begin{proof}[Proof of Theorem \ref{thm:structure-2}]
The proof of this theorem is essentially the upper bound -- the construction of the connecting path -- of the proof of Theorem \ref{thm:distances}.
For the first statement, about the segments with increasing degrees, Proposition \ref{prop:hubs} shows that there is a path of length $T_q({\beta_n})$ that connects $v_q$ to a vertex $v_q^\star$ with degree as in \eqref{eq:degree-at-segment}, while Proposition \ref{lem:no-early-meeting} ensures that the total degree in the exploration clusters of depth $T_q({\beta_n})$ is the same order of magnitude as the degree of $v_q^\star$, and that the two exploration clusters are disjoint.

By Lemma \ref{lem:badpaths}, the $(T_q({\beta_n})-k)$th vertex on this path has degree at least $u_{i_{\star \sss{(q)}}-k}^{\sss{(q)}}$ and at most $\widehat u_{i_{\star \sss{(q)}}-k}^{\sss{(q)}}$, both of them $\sim n^{{\beta_n}(\tau-2)^{\bnq({\beta_n})+k }}$.

Thus, when connecting  the two clusters $\CC^{\sss{(r)}}_{T_r({\beta_n})}$ and $\CC^{\sss{(b)}}_{T_b({\beta_n})}$, Theorem \ref{thm:structure-1} can be applied to show that there is a path that connects $v_r^\star, v_b^\star$ of length as in \eqref{eq:connecting-segment}, using vertices of degree at least $n^{{\beta_n}f_n^\ell}$. By the bound on the total degrees in Proposition \ref{lem:no-early-meeting}, we can identify all the vertices in $\CC^{\sss{(q)}}_{T_q({\beta_n})}$, and apply Theorem \ref{thm:structure-1} once more to see that
no shorter path is possible between $\CC^{\sss{(r)}}_{T_r({\beta_n})}$ and $\CC^{\sss{(b)}}_{T_b({\beta_n})}$ than the one with length \eqref{eq:connecting-segment}, and any of these shortest paths uses vertices with degrees at least $n^{{\beta_n}f_n^\ell}$.
 \end{proof}

\section{Comparison of typical distances}\label{s:compare}
In this section we compare the result of Theorem \ref{thm:distances} applied to the ${\beta_n}=1/(\tau-1)$ setting to \cite[Theorem 1.2]{HHZ07} in more detail.
Here we argue that the two formulations - namely the one in \cite[Theorem 1.2]{HHZ07} and the one in \eqref{eq:distance-iid} - are indeed the same, by describing the core idea of the proof of \cite[Theorem 1.2]{HHZ07}, and relate  quantities (events, random variables, etc.) appearing in that proof to quantities in this paper. In this section $\approx$ means equality up to a $(1+o_{\Pv}(1))$ factor.

The proof of \cite[Theorem 1.2]{HHZ07} goes through a minimisation problem, where the two BFS clusters of $v_r$ and $v_b$ should connect the first time such that a coupling (to two branching processes) should be maintained. More precisely, suppose $k_1$ is a random variable that is measurable w.r.t.\ $\{\CC_{s}^{\sss{(q)}}\}_{s=1}^m$, for some $m$ (in this paper we take $m=t(n^{\vr_n'})$).  Suppose we run the BFS started from $v_r$ for $k_1$ steps, and from $v_b$ for $k-k_1-1$ steps. There are $Z_{k_1+1}^{\sss{(r)}}, Z_{k-k_1}^{\sss{(b)}}$ many half edges attached to the vertices in the two clusters, respectively. The distance between $v_r, v_b$ is then larger than $k$ if these sets of half-edges do not connect to each other, and the probability of this event is approximately
\be \label{eq:distance-tail} \Pv(H(\CC^{\sss{(r)}}_{k_1})\cap H(\CC^{\sss{(b)}}_{k-k_1-1})=\varnothing\mid Z_{k_1+1}^{\sss{(r)}}, Z_{k-k_1}^{\sss{(b)}}) \approx \exp\{ - c Z_{k_1+1}^{\sss{(r)}} Z_{k-k_1}^{\sss{(b)}} / \ell_n\} \ee
  A branching process approximation similar to the one in Section \ref{s:couple} is performed to approximate the numerator in the exponent. However, this BP approximation is only valid until none of the colors have more half-edges than $n^{(1-\ve) / (\tau-1)}$ for some small $\ve>0$, i.e., they do not reach the highest-degree vertices in the graph yet. This criterion is established in \cite[Proposition 3.2]{HHZ07}. The set $\CT_{m}^{i,n}$ in \cite[(3.3)]{HHZ07} exactly describes those values of $\ell$ for which $\{\ell\le T_q+1\}$, $q\in\{r,b\}$ holds (where $T_q=T_q(1/(\tau-1))$, defined in \eqref{def:ti-bi}, or in \eqref{eq:k*+i*}, is the time to reach the hubs). The $+1$ is added to $T_q$ since the half-edges attached to vertices in the BFS cluster at time $T_q$ can be described as the next generation, they have size $Z_{T_q+1}^{\sss{(q)}}$.

Now, from \eqref{eq:distance-tail}, we see that $\{\mathrm d_G(v_r, v_b)>k\}$ happens whp if $Z_{k_1+1}^{\sss{(r)}}\cdot Z_{k-k_1}^{\sss{(b)}} = o(n)$ and also that both  $k_1+1 \in \CT_{m}^{r,n}$ and $k-k_1 \in \CT_{m}^{b,n}$ holds. These latter  conditions are described in the set of indices $\CB_n$ in \cite[(4.57)]{HHZ07}:
 \[  \CB_n:=\{ k_1\in \N: k_1+1 \le T_r+1, k-k_1 \le T_b+1\}.\]

Using the BP approximation similar as in \eqref{def:yrn-ybn},  and the rhs of \eqref{eq:distance-tail},
\[ \Pv(\mathrm{d}_G(v_r, v_b) >k) \approx \max_{k_1 \in \CB_n}\exp \{ - C \exp\{(\tau-2)^{-(k_1+1)}\Yrn + (\tau-2)^{-(k-k_1)} \Ybn - \log n  \}   \}. \]
With the event \[ \CE_{n,k}(\delta):=\{ \exists k_1\in \CB_n, (\tau-2)^{-(k_1+1)}\Yrn + (\tau-2)^{-(k-k_1)} \Ybn< (1-\delta)\log n\} \] it is obvious from \eqref{eq:distance-tail} that for any $\delta>0$, $\lim_{n\to \infty}\Pv(\mathrm d_G(v_r, v_b)>k\mid \CE_{n,k}(\delta)) \to 1$, while $\Pv(\mathrm d_G(v_r, v_b)>k\mid \CE_{n,k}^c(\delta)) \to 0$. Hence, we get that
\be\label{eq:distance-tail-BP} \Pv_Y(\mathrm d_G(v_r, v_b)>k) \approx \Pv( \min_{k_1\in \CB_n}(\tau-2)^{-(k_1+1)}\Yrn + (\tau-2)^{-(k-k_1)} \Ybn < \log n  ).\ee
The paper shows that $\min_{k_1\in \CB_n}$ can be replaced by $\min_{k_1\le k}$ in the minimum in \eqref{eq:distance-tail-BP} above.

Next we show that the formulation of \eqref{eq:distance-tail-BP} gives the same distances as our statement for typical distances in Theorem \ref{thm:distances}, that is,
\be\label{eq:our-statement} \mathrm d_G(v_r, v_b) = T_r + T_b + 2 - \ind\{(\tau-2)^{\bnb} + (\tau-2)^{\bnr} > \tau-1\}.\ee
To be able to show that the two formulation are the same, we use \eqref{eq:distance-tail-BP} to show that 
\begin{enumerate}
\item $\Pv( \mathrm d_G(v_r, v_b) > T_r + T_b )\to1$, 
\item $\Pv( \mathrm d_G(v_r, v_b) > T_r + T_b +1 )\to \Pv(\ind\{(\tau-2)^{\bnb} + (\tau-2)^{\bnr} > \tau-1\}=0 )$ and finally that 
\item $\Pv( \mathrm d_G(v_r, v_b) > T_r + T_b +2 ) \to 0$.
\end{enumerate}
From \eqref{eq:k*+i*} and for ${\beta_n}=1/(\tau-1)$, it is an elementary calculation to check that that for any $i\in \Z$, $q\in\{r,b\}$
\be \label{eq:degree-at-tji} \left(\tau-2\right)^{-(T_q+1+i)} Y_q^{\sss{(n)}} = \log n \frac{(\tau-2)^{\bnr-i}}{\tau-1}.  \ee
First, we check that \eqref{eq:distance-tail-BP} gives (a).
For this we set $k=T_r+T_b$, and let us write $k_1:=T_r-\ell$ for some $\ell\in \Z$, then $k-k_1=T_b+\ell$. Hence, we can rewrite \eqref{eq:distance-tail-BP} using  \eqref{eq:degree-at-tji} with $i=-\ell$ and $i=\ell-1$, and get
\[ \Pv_Y( \mathrm d_G(v_r, v_b) > T_r+T_b) \approx  \Pv\left( \min_{\ell} \frac{(\tau-2)^{\bnr+\ell} + (\tau-2)^{\bnb+1-\ell}}{\tau-1} < 1  \right). \]
It is  clear now that setting $\ell=0$ shows that the inequality is satisfied for all $\bnr, \bnb \in [0,1)$, since the expression after the $\min_\ell$, for $\ell=0$ is at most $(1+(\tau-2))/(\tau-1) = 1$.  Moreover, note that for $\ell=0$, $k_1=T_r$ and $k-k_r=T_b$, so both $k_1+1\le T_r+1$ and $k-k_1 \le T_b+1$ hold, hence, we found an index $k_1$ in $\CB_n$.

Next, we check that \eqref{eq:distance-tail-BP} gives (b).
For this, we set $k=T_r+T_b+1$, again write $k_1=T_r-\ell$, so that $k-k_1=T_b+1+\ell$. Hence, we can rewrite \eqref{eq:distance-tail-BP} using \eqref{eq:degree-at-tji} with $i=-\ell$ and $i=\ell$ and get
 \[ \Pv_Y( \mathrm d_G (v_r, v_b)> T_r+T_b+1) \approx  \Pv\left( \min_{\ell} \frac{(\tau-2)^{\bnr+\ell} + (\tau-2)^{\bnb-\ell}}{\tau-1} < 1  \right). \]
It is clear now that setting $\ell=0$ yields (b).  Moreover, note that for $\ell=0$ $k_1\in \CB_n$ holds as well. We argue that $\ell=0$ is indeed the minimizer of the expression after the $\min$. Wlog we can assume that $\ell\ge 1$, the case when $\ell\le -1$ can be treated similarly. Then,
we need to show that for all $\ell\ge 1$,
\be\label{eq:needed}  (\tau-2)^{\bnr+\ell} +  (\tau-2)^{\bnb-\ell} > (\tau-2)^{\bnr} +  (\tau-2)^{\bnb}. \ee
Rearranging this inequality yields
\[   (\tau-2)^{\bnb} ((\tau-2)^{-\ell}-1) > (\tau-2)^{\bnr}(1-(\tau-2)^\ell) \]
Since for all $\ell>1$,
\[ \ba  (\tau-2)^{\bnb} \left( (\tau-2)^{-\ell }-1\right) &> (\tau-2)^{-\ell+1} -(\tau-2) \ge 1-(\tau-2)^{\ell} \ge 
 (\tau-2)^{\bnr} \left( 1-(\tau-2)^{\ell }\right), \ea\]
the claim is established.

Finally, we check (c). For this, we set $k=T_r+T_b+2$, again write $k_1=T_r-\ell$, so that $k-k_1=T_b+2+\ell$. Hence, we can rewrite \eqref{eq:distance-tail-BP} using \eqref{eq:degree-at-tji} with $i=-\ell$ and $i=\ell+1$ and get
 \be\label{eq:(c)} \Pv_Y( \mathrm d_G(v_r, v_b) > T_r+T_b+2) \approx  \Pv\left( \min_{\ell} \frac{(\tau-2)^{\bnr+\ell} + (\tau-2)^{\bnb-\ell-1}}{\tau-1} < 1  \right). \ee
We need to show that no $\ell\in \Z$ satisfies this minimisation problem and thus the probability tends to $0$.
For this, we use again that $\bnr,\bnb \in [0,1)$ implies that
\[(\tau-2)^{\bnr+\ell} + (\tau-2)^{\bnb-\ell-1} > (\tau-2)^{\ell+1} + (\tau-2)^{-\ell},\]
and it is elementary to show again that the rhs is at least $\tau-1$ for all $\ell \in \Z$ and $\tau \in (2,3)$, thus, the inequality on the rhs of \eqref{eq:(c)} cannot be satisfied.

These calculations show that the statement of Theorem \ref{thm:distances} yields - through a non-trivial rewrite - the statement of \cite[Theorem 1.2]{HHZ07}. The final formula of \cite[Theorem 1.2]{HHZ07}, i.e., the distribution of the fluctuation of the typical distance around $2\log\log n/|\log (\tau-2)|$ is then obtained by solving analytically the minimisation problem  on the rhs of \eqref{eq:distance-tail-BP} with $k_1\in \CC^{\sss{(b)}}_n$ replaced by $k_1 \le k$.
\section{Acknowledgement}
This work is supported by the Netherlands
Organisation for Scientific Research (NWO) through VICI grant 639.033.806 (RvdH), VENI grant 639.031.447 (JK), the Gravitation {\sc Networks} grant 024.002.003 (RvdH) and the STAR Cluster (JK).

\bibliographystyle{abbrv}
\bibliography{refscompetition}

\begin{thebibliography}{10}

\bibitem{Acha2006}
S.~Achard, R.~Salvador, B.~Whitcher, J.~Suckling, and E.~Bullmore.
\newblock A resilient, low-frequency, small-world human brain functional
  network with highly connected association cortical hubs.
\newblock {\em The Journal of Neuroscience}, 26(1):63--72, 2006.

\bibitem{AlbBar02}
R.~Albert and A.-L. Barab{\'a}si.
\newblock Statistical mechanics of complex networks.
\newblock {\em Rev. Modern Phys.}, {\bf 74}(1):47--97, 2002.

\bibitem{AlBarHaw00}
R.~Albert, H.~Jeong, and A.-L. Barab{\'a}si.
\newblock Error and attack tolerance of complex networks.
\newblock {\em Nature}, 406(6794):378--382, 2000.

\bibitem{AmaScaBarSta00}
L.~A.~N. Amaral, A.~Scala, M.~Barth{\'e}l{\'e}my, and H.~E. Stanley.
\newblock Classes of small-world networks.
\newblock {\em Proc. Natl. Acad. Sci. USA}, {\bf 97}:11149--11152, (2000).

\bibitem{Bara1999}
A.-L. Barab{\'a}si and R.~Albert.
\newblock Emergence of scaling in random networks.
\newblock {\em Science}, 286(5439):509--512, 1999.

\bibitem{BarHofKom14}
E.~Baroni, R.~v.~d. Hofstad, and J.~Komj{\'a}thy.
\newblock Fixed speed competition on the configuration model with infinite
  variance degrees: unequal speeds.
\newblock {\em Electron. J. Probab.}, 20:no. 116, 1--48, 2015.

\bibitem{BHH10}
S.~Bhamidi, R.~v.~d. Hofstad, and G.~Hooghiemstra.
\newblock First passage percolation on random graphs with finite mean degrees.
\newblock {\em Ann. Appl. Probab.}, 20(5):1907--1965, 2010.

\bibitem{BHH14}
S.~Bhamidi, R.~v.~d. Hofstad, and G.~Hooghiemstra.
\newblock Universality for first passage percolation on sparse random graphs.
\newblock to appear in the \emph{Annals of Probability}, arXiv:1210.6839, 2012.

\bibitem{BinGol}
N.~H. Bingham, C.~M. Goldie, and J.~L. Teugels.
\newblock {\em Regular variation}, volume~27.
\newblock Cambridge University Press, 1989.

\bibitem{Brod2000}
A.~Broder, R.~Kumar, F.~Maghoul, P.~Raghavan, S.~Rajagopalan, R.~Stata,
  A.~Tomkins, and J.~Wiener.
\newblock Graph structure in the web.
\newblock {\em Computer Networks}, 33(1):309--320, 2000.

\bibitem{ChuLu01}
F.~Chung and L.~Lu.
\newblock The diameter of sparse random graphs.
\newblock {\em Adv. in Appl. Math.}, {\bf 26}(4):257--279, 2001.

\bibitem{ChuLu04}
F.~Chung and L.~Lu.
\newblock The average distance in a random graph with given expected degrees.
\newblock {\em Internet Mathematics}, 1(1):91--113, 2004.

\bibitem{ClaShaNew09}
A.~Clauset, C.~Shalizi, and M.~E.~J. Newman.
\newblock Power-law distributions in empirical data.
\newblock {\em SIAM Review}, {\bf 51}(4):661--703, 2009.

\bibitem{CohEre01}
R.~Cohen, K.~Erez, D.~ben Avraham, and S.~Havlin.
\newblock Breakdown of the internet under intentional attack.
\newblock {\em Phys. Rev. Lett.}, 86:3682--3685, Apr 2001.

\bibitem{CohHav03}
R.~Cohen and S.~Havlin.
\newblock Scale-free networks are ultrasmall.
\newblock {\em Phys. Rev. Lett.}, 90:058701, Feb 2003.

\bibitem{D78}
P.~L. Davies.
\newblock The simple branching process: a note on convergence when the mean is
  infinite.
\newblock {\em J. Appl. Probab.}, 15(3):466--480, 1978.

\bibitem{DodMuhWat03}
P.~Dodds, R.~Muhamad, and D.~Watts.
\newblock An experimental study of search in global social networks.
\newblock {\em Science}, {\bf 301}(5634):827--829, 2003.

\bibitem{DomHofHoo10}
S.~Dommers, R.~v.~d. Hofstad, and G.~Hooghiemstra.
\newblock Diameters in preferential attachment graphs.
\newblock {\em Journ.\ Stat.\ Phys.}, {\bf 139}:72--107, 2010.

\bibitem{Doro2003}
S.~N. Dorogovtsev, J.~F.~F. Mendes, and A.~N. Samukhin.
\newblock Metric structure of random networks.
\newblock {\em Nuclear Physics B}, 653(3):307--338, 2003.

\bibitem{EckMor14}
M.~Eckhoff, P.~M{\"o}rters, et~al.
\newblock Vulnerability of robust preferential attachment networks.
\newblock {\em Electronic Journal of Probability}, 19, 2014.

\bibitem{Falo1999}
M.~Faloutsos, P.~Faloutsos, and C.~Faloutsos.
\newblock On power-law relationships of the internet topology.
\newblock {\em ACM SIGCOMM Computer Communication Review}, 29(4):251--262,
  1999.

\bibitem{FedHof16}
L.~Federico and R.~van~der Hofstad.
\newblock Critical window for connectivity in the configuration model.
\newblock {\em Combinatorics, Probability and Computing}, pages 1--21, 2017.

\bibitem{FerRam04}
D.~Fernholz and V.~Ramachandran.
\newblock The diameter of sparse random graphs.
\newblock {\em Random Structures {\&} Algorithms}, {\bf 31}(4):482--516, 2007.

\bibitem{Fron2003}
A.~Fronczak, P.~Fronczak, and J.~A. Ho\l{}yst.
\newblock How to calculate the main characteristics of random graphs-a new
  approach.
\newblock arXiv:0308629 [cond-mat], 2003.

\bibitem{FroFro04}
A.~Fronczak, P.~Fronczak, and J.~A. Ho\l{}yst.
\newblock Average path length in random networks.
\newblock {\em Phys. Rev. E}, 70:056110, Nov 2004.

\bibitem{GaoWor2016}
P.~Gao and N.~Wormald.
\newblock Enumeration of graphs with a heavy-tailed degree sequence.
\newblock {\em Advances in Mathematics}, 287:412 -- 450, 2016.

\bibitem{Guim2005}
R.~Guimera, S.~Mossa, A.~Turtschi, and L.~N. Amaral.
\newblock The worldwide air transportation network: Anomalous centrality,
  community structure, and cities' global roles.
\newblock {\em Proceedings of the National Academy of Sciences},
  102(22):7794--7799, 2005.

\bibitem{H10}
R.~v.~d. Hofstad.
\newblock {\em Random Graphs and Complex Networks, Vol. I}.
\newblock Cambridge University Press, 2016.

\bibitem{H102}
R.~v.~d. Hofstad.
\newblock {\em Random Graphs and Complex Networks, Vol. II}.
\newblock Cambridge University Press, 2016.
\newblock to appear.

\bibitem{HofHooVan05a}
R.~v.~d. Hofstad, G.~Hooghiemstra, and P.~Van~Mieghem.
\newblock Distances in random graphs with finite variance degrees.
\newblock {\em Random Structures \& Algorithms}, {\bf 27}(1):76--123, (2005).

\bibitem{HHZ07}
R.~v.~d. Hofstad, G.~Hooghiemstra, and D.~Znamenski.
\newblock Distances in random graphs with finite mean and infinite variance
  degrees.
\newblock {\em Electron. J. Probab.}, 12:no. 25, 703--766, 2007.

\bibitem{HolKim02}
P.~Holme, B.~J. Kim, C.~N. Yoon, and S.~K. Han.
\newblock Attack vulnerability of complex networks.
\newblock {\em Phys. Rev. E}, 65:056109, May 2002.

\bibitem{FroFro05}
J.~A. Ho\l{}yst, J.~Sienkiewicz, A.~Fronczak, P.~Fronczak, and K.~Suchecki.
\newblock Universal scaling of distances in complex networks.
\newblock {\em Phys. Rev. E}, 72:026108, Aug 2005.

\bibitem{JanLuc09}
S.~Janson and M.~J. Luczak.
\newblock A new approach to the giant component problem.
\newblock {\em Random Structures \& Algorithms}, 34(2):197--216, 2009.

\bibitem{Jeon2000}
H.~Jeong, B.~Tombor, R.~Albert, Z.~N. Oltvai, and A.-L. Barab{\'a}si.
\newblock The large-scale organization of metabolic networks.
\newblock {\em Nature}, 407(6804):651--654, 2000.

\bibitem{Jord2003}
P.~Jordano, J.~Bascompte, and J.~M. Olesen.
\newblock Invariant properties in coevolutionary networks of plant–animal
  interactions.
\newblock {\em Ecology Letters}, 6(1):69--81, 2003.

\bibitem{Kuma1999}
R.~Kumar, P.~Raghavan, S.~Rajagopalan, and A.~Tomkins.
\newblock Trawling the web for emerging cyber-communities.
\newblock {\em Computer Networks}, 31(11–16):1481 -- 1493, 1999.

\bibitem{LevPerWil09}
D.~A. Levin, Y.~Peres, and E.~L. Wilmer.
\newblock {\em Markov {C}hains and {M}ixing {T}imes}.
\newblock American Mathematical Soc., 2009.

\bibitem{LilEdlAmaSta01}
F.~Liljeros, C.~R. Edling, L.~A.~N. Amaral, and H.~E. Stanley.
\newblock The web of human sexual contacts.
\newblock {\em Nature}, {\bf 411}:907, 2001.

\bibitem{McKay1991}
B.~D. McKay and N.~C. Wormald.
\newblock Asymptotic enumeration by degree sequence of graphs with
  degreeso(n1/2).
\newblock {\em Combinatorica}, 11(4):369--382, 1991.

\bibitem{Milg67}
S.~Milgram.
\newblock The small world problem.
\newblock {\em Psychology Today}, May:60--67, 1967.

\bibitem{MolRee95}
M.~Molloy and B.~Reed.
\newblock A critical point for random graphs with a given degree sequence.
\newblock {\em Random Structures \& Algorithms}, 6(2-3):161--180, 1995.

\bibitem{MolRee98}
M.~Molloy and B.~Reed.
\newblock The size of the giant component of a random graph with a given degree
  sequence.
\newblock {\em Comb. Probab. Comput.}, 7(3):295--305, Sept. 1998.

\bibitem{Mont2006}
J.~M. Montoya, S.~L. Pimm, and R.~V. Sol{\'e}.
\newblock Ecological networks and their fragility.
\newblock {\em Nature}, 442(7100):259--264, 2006.

\bibitem{Mossa2002}
S.~Mossa, M.~Barth{\'e}l{\'e}my, H.~Eugene~Stanley, and L.~A. Nunes~Amaral.
\newblock Truncation of power law behavior in ``scale-free'' network models due
  to information filtering.
\newblock {\em Phys. Rev. Lett.}, 88:138701, Mar 2002.

\bibitem{Nana2006}
A.~A. Nanavati, S.~Gurumurthy, G.~Das, D.~Chakraborty, K.~Dasgupta,
  S.~Mukherjea, and A.~Joshi.
\newblock On the structural properties of massive telecom call graphs: Findings
  and implications.
\newblock In {\em Proceedings of the 15th ACM International Conference on
  Information and Knowledge Management}, CIKM '06, pages 435--444, New York,
  NY, USA, 2006. ACM.

\bibitem{Newm01b}
M.~E.~J. Newman.
\newblock Scientific collaboration networks. {I} and {II}.
\newblock {\em Phys. Rev. E}, {\bf 64}(1):016131, 016132, Jun 2001.

\bibitem{Newman2001}
M.~E.~J. Newman.
\newblock The structure of scientific collaboration networks.
\newblock {\em Proceedings of the National Academy of Sciences},
  98(2):404--409, 2001.

\bibitem{Newm03a}
M.~E.~J. Newman.
\newblock The structure and function of complex networks.
\newblock {\em SIAM Rev.}, {\bf 45}(2):167--256 (electronic), 2003.

\bibitem{Newm05}
M.~E.~J. Newman.
\newblock Power laws, {P}areto distributions and {Z}ipf's law.
\newblock {\em Contemporary Physics}, {\bf 46}(5):323--351, 2005.

\bibitem{NewStrWat00}
M.~E.~J. Newman, S.~Strogatz, and D.~Watts.
\newblock Random graphs with arbitrary degree distribution and their
  application.
\newblock {\em Phys.\ Rev.\ E}, {\bf 64}:026118, 1--17, 2000.

\bibitem{NorRei06}
I.~Norros and H.~Reittu.
\newblock On a conditionally {P}oissonian graph process.
\newblock {\em Adv. in Appl. Probab.}, {\bf 38}(1):59--75, 2006.

\bibitem{NorRei08}
I.~Norros and H.~Reittu.
\newblock Network models with a 'soft hierarchy': a random graph construction
  with loglog scalability.
\newblock {\em IEEE network}, 22(2), 2008.

\bibitem{Osti2014}
M.~Ostilli.
\newblock Fluctuation analysis in complex networks modeled by hidden-variable
  models: Necessity of a large cutoff in hidden-variable models.
\newblock {\em Phys. Rev. E}, 89:022807, Feb 2014.

\bibitem{PooKoc78}
I.~d.~S. Pool and M.~Kochen.
\newblock Contacts and influence.
\newblock {\em Social Networks}, {\bf 1}:5--51, 1978.

\bibitem{Sene73}
E.~Seneta.
\newblock The simple branching process with infinite mean. i.
\newblock {\em Journal of Applied Probability}, pages 206--212, 1973.

\bibitem{Sene74}
E.~Seneta.
\newblock Regularly varying functions in the theory of simple branching
  processes.
\newblock {\em Advances in Applied Probability}, pages 408--420, 1974.

\bibitem{Stro2001}
S.~H. Strogatz.
\newblock Exploring complex networks.
\newblock {\em Nature}, 410(6825):268--276, 2001.

\bibitem{TraMil69}
J.~Travers and S.~Milgram.
\newblock An experimental study of the small world problem.
\newblock {\em Sociometry}, {\bf 32}:425--443, 1969.

\bibitem{Ugan2011}
J.~Ugander, B.~Karrer, L.~Backstrom, and C.~Marlow.
\newblock The anatomy of the {F}acebook social graph.
\newblock arXiv:1111.4503, 2011.

\bibitem{Watt99}
D.~J. Watts.
\newblock {\em Small worlds: The dynamics of networks between order and
  randomness}.
\newblock Princeton Studies in Complexity. Princeton University Press,
  Princeton, NJ, 1999.

\bibitem{WatStr98}
D.~J. Watts and S.~H. Strogatz.
\newblock Collective dynamics of `small-world' networks.
\newblock {\em Nature}, {\bf 393}:440--442, 1998.

\end{thebibliography}

\end{document}